\newtheorem{theorem}{Theorem}[section]
\newtheorem{lemma}[theorem]{Lemma}
\newtheorem{corollary}[theorem]{Corollary}
\theoremstyle{definition}
\newtheorem{defn}{Definition}[section]
\theoremstyle{remark}
\newtheorem{remark}{Remark}[section]
\newcommand\Leb{\operatorname{Leb}}
\newcommand\Base{\operatorname{Base}}
\newcommand\Diam{\operatorname{Diam}}
\begin{document}
\hypersetup{pageanchor=false}
\title{Supercritical loop percolation on $\mathbb{Z}^d$ for $d\geq 3$} 

\author{Yinshan Chang\footnote{Max-Planck Institute in Science, 04103, Leipzig, Germany. Email: ychang@mis.mpg.de}}
\maketitle
\begin{abstract}
 The loop cluster model was introduced by Y. Le Jan in \cite{LeJanMR2971372}, which is a model of random graphs constructed from a Poisson point process $\mathcal{L}_{\alpha}$ of loops on countable graphs. Two vertices are in the same cluster if they are connected through a sequence of intersecting loops. In this paper, we are interested in the loop cluster model on $\mathbb{Z}^d$ for $d\geq 3$. It is a long range model with two parameters $\alpha$ and $\kappa$, where the non-negative parameter $\alpha$ measures the amount of loops, and $\kappa$ plays the role of killing on vertices penalizing ($\kappa\geq 0$) or favoring ($\kappa<0$) appearance of large loops. We consider the truncated loop cluster model formed by the Poisson point process $\mathcal{L}_{\alpha,\leq m}$, which is the restriction of $\mathcal{L}_{\alpha}$ on loops with at most $m$ jumps. We prove the existence of percolation in a $2$-dimensional slab for the truncated loop model $\mathcal{L}_{\alpha,\leq m}$ as long as the intensity parameter $\alpha$ is strictly above the critical threshold of the non-truncated loop model and $m$ is large enough. We apply this result to prove the exponential decay of one arm connectivity for the finite cluster at $0$ for the whole supercritical regime of the non-truncated loop model. For $\kappa=0$, this loop percolation model provides an example in which we have different behaviors of finite clusters in sub-critical and super-critical regimes. Also, we deduce the strict increase of the critical curve $\alpha\rightarrow\kappa_c(\alpha)$ for $\alpha\geq\alpha_c$, where $\alpha_c$ is the critical value when $\kappa=0$. In the end, we prove that $\forall\alpha>\alpha_c$ large balls in the infinite cluster are finally very regular in the sense of \cite{Sapozhnikov2014}, which implies that large balls are finally very good in the sense of \cite{BarlowMR2094438}. By \cite{BarlowMR2094438} and \cite{BarlowHamblyMR2471657}, we have Harnack's inequality and Gaussian type estimate for simple random walks on the infinite cluster for all $\alpha>\alpha_c$.

\end{abstract}

\section{Introduction}
 The loop cluster model is a model of random graphs constructed from a loop soup (a Poisson point process of loops) on a finite or countable graph. It was introduced by Y. Le Jan in \cite{LeJanMR2971372} and studied by S. Lemaire and Le Jan in \cite{LeJanLemaireMR3263044}, by A. Sapozhnikov and the author in \cite{loop-perc-Zd}, by T. Lupu in \cite{loop-perc-gff-coupling}, \cite{loop-perc-H} and \cite{loop-perc-H-cvg}, by F. Camia in \cite{loop-perc-big-cvg}. Also, note that the Brownian loop soup clusters have already been studied in the context of CLE by S. Sheffield and W. Werner in \cite{SheffieldWernerMR2979861}.

 We adopt the same notation as in \cite{loop-perc-Zd}. Consider an unweighted undirected graph $G=(V,E)$ and a random walk $(X_m,m\geq 0)$ on it with transition matrix $Q$. Unless specified, we will assume that $(X_m,m\geq 0)$ is a simple random walk (SRW) on $\mathbb Z^d$.
 As in \cite{LeJanLemaireMR3263044}, an element $\dot\ell = (x_1,\dots,x_n)$ of $V^n$, $n\geq 2$, satisfying $x_1\neq x_2,\dots,x_n\neq x_1$ is called a non-trivial discrete based loop. We define its \emph{length} $|\dot{\ell}|$ to be $n$. Two based loops of length $n$ are equivalent if they coincide after a circular permutation of their coefficients, i.e. $(x_1,\dots,x_n)$ is equivalent to $(x_i,\dots,x_n,x_1,\dots,x_{i-1})$ for all $i$. Equivalence classes of non-trivial discrete based loops for this equivalence relation are called (non-trivial) discrete loops. For a loop $\ell$ (equivalence class of $\dot{\ell}$), we define its \emph{length} $|\ell|$ to be $|\dot{\ell}|$.  

Given an additional parameter $\kappa>-1$, we associate to each based loop $\dot\ell=(x_1,\dots,x_n)$ the weight
\begin{equation*}
\dot\mu_\kappa(\dot\ell)=\frac{1}{n}\left(\frac{1}{1+\kappa}\right)^{n}Q^{x_1}_{x_2}\cdots Q^{x_{n-1}}_{x_n}Q^{x_n}_{x_1}.
\end{equation*}
The push-forward of $\dot\mu_\kappa$ on the space of discrete loops is denoted by $\mu_\kappa$.

For $\alpha>0$ and $\kappa>-1$, let $\mathcal L_{\alpha,\kappa}$ be the Poisson loop ensemble of intensity $\alpha\mu_\kappa$, 
i.e, $\mathcal L_{\alpha,\kappa}$ is a random countable collection of discrete loops such that the point measure $\sum\limits_{\ell\in \mathcal{L}_{\alpha,\kappa}}\delta_{\ell}$ is 
a Poisson random measure of intensity $\alpha\mu_\kappa$. (Here, $\delta_\ell$ means the Dirac mass at the loop $\ell$ and $\mathcal{L}_{\alpha,\kappa}$ is a multi-set.) We identify $\mathcal L_{\alpha,\kappa}$ with the random measure $\sum\limits_{\ell\in \mathcal{L}_{\alpha,\kappa}}\delta_{\ell}$. The collection $\mathcal L_{\alpha,\kappa}$ is induced by the Poisson ensemble of non-trivial continuous loops defined by Le Jan \cite{LeJanMR2815763}. For $\kappa>-1$, let $\mathcal{LP}_{\kappa}=\sum\limits_{i}\delta_{(\alpha_i,\ell_i)}$ be the Poisson point process of the intensity measure $\Leb([0,\infty[)\otimes\mu_{\kappa}$, where $\Leb([0,\infty[)$ is the Lebesgue measure on $[0,\infty[$. We identify $\mathcal{LP}_{\kappa}$ with its support: $\mathcal{LP}_{\kappa}=\{(\alpha_i,\ell_i):\mathcal{LP}(\alpha_i,\ell_i)>0\}$. Then, $\mathcal{L}_{\alpha,\kappa}=\sum\limits_{(\alpha_i,\ell_i)\in\mathcal{LP}_{\kappa},\alpha_i\leq\alpha}\delta_{\ell_i}$. Similarly, for based loops, we use notation $\dot{\mathcal{L}}_{\alpha,\kappa}$ and $\dot{\mathcal{LP}}_{\kappa}$.

An edge $\{x,y\}$ is called open at time $\alpha$ if it is crossed by at least one loop $\ell\in\mathcal{L}_{\alpha,\kappa}$ in any direction. Open edges form clusters of vertices $\mathcal{C}_{\alpha,\kappa}$. For a vertex $x$, let $\mathcal{C}_{\alpha,\kappa}(x)$ be the open cluster containing $x$. Let $\alpha_c(\kappa)=\inf\{\alpha>0:\#\mathcal{C}_{\alpha,\kappa}(0)=\infty\}$ be the \emph{critical threshold} of the loop percolation. Let $\mathcal{L}_{\alpha,\kappa}^{\leq m}=\{\ell\in\mathcal{L}_{\alpha,\kappa}:|\ell|\leq m\}$. Let $\alpha_c^{(m)}(\kappa)$ be the critical threshold for the percolation on $\mathbb{Z}^d$ by $\mathcal{L}_{\alpha,\kappa}^{\leq m}$ and by $\widetilde{\alpha}_c^{(m)}(\kappa)$ the critical threshold for the percolation on $\mathrm{Slab}(m)\overset{\mathrm{def}}{=}\mathbb{Z}_{+}^{2}\times\{0,1,\ldots,m\}^{d-2}$ by $\{\ell\in\mathcal{L}_{\alpha,\kappa}^{\leq m}:\ell\subset\mathrm{Slab}(m)\}$. 

\emph{For simplicity, throughout the paper, we omit $\kappa$ in the notation if $\kappa=0$, e.g.,  $\alpha_c$ is short for $\alpha_c(0)$.}

We are particularly interested in the supercritical phase of the loop percolation. For Bernoulli bond or site percolation, ``slab percolation'' are quite useful in the study of supercritical phase. Analogously, we consider truncated loop percolation models.
\begin{theorem}\label{thm: truncated loop percolation}
 For $d\geq 3$ and $\kappa\geq 0$,
 \begin{itemize}
  \item[i)] $\alpha_c^{(2m+2)}(\kappa)<\alpha_c^{(2m)}(\kappa)$ for all $m\geq 1$,
  \item[ii)] $(\widetilde{\alpha}_c^{(2m)})_m(\kappa)\downarrow\alpha_c(\kappa)$ and $(\alpha_c^{(2m)}(\kappa))_m\downarrow\alpha_c(\kappa)$ as $m\uparrow\infty$.
 \end{itemize}
 (Note that $|\ell|$ must be even for a loop $\ell$ on $\mathbb{Z}^d$.)
\end{theorem}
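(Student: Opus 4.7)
The plan is to handle the two parts separately, with (ii) being the substantive content and (i) following from an essential enhancement argument.

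\textbf{Part (ii): Slab percolation at $\alpha > \alpha_c(\kappa)$.} The monotonicity inequalities $\widetilde{\alpha}_c^{(2m)}(\kappa) \geq \alpha_c^{(2m)}(\kappa) \geq \alpha_c(\kappa)$ are obvious by inclusion of loop families. The content is the reverse: for every $\alpha > \alpha_c(\kappa)$, I want to exhibit percolation in $\mathrm{Slab}(m)$ using only loops of length $\leq 2m$ lying in the slab, provided $m$ is large enough. My plan is to imitate the renormalization scheme of Grimmett--Marstrand for Bernoulli percolation, with a sprinkling step adapted to the loop soup. Fix $\alpha_c(\kappa) < \alpha' < \alpha$ and write the Poisson ensemble on $[0,\alpha]$ as $\mathcal{LP}_\kappa|_{[0,\alpha']}$ plus an independent Poisson layer of intensity $(\alpha-\alpha')\mu_\kappa$. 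Using $\alpha' > \alpha_c(\kappa)$, a standard finite--size / sharp threshold argument shows that for $N$ large the event of a good crossing of the box $[-N,N]^d$ at parameter $\alpha'$ has probability arbitrarily close to $1$. I then truncate this event to loops of length $\leq 2m$ contained in a slab of width $m$: by dominated convergence (using $\mu_\kappa(\{\ell:|\ell|>2m\text{ or }\ell\not\subset\text{slab}\})<\infty$ on any finite set of edges), the crossing probability of the restricted event still tends to $1$. The independent sprinkling layer is then used to repair the $O(1)$ connections lost near the boundary between adjacent blocks: by a standard ``renormalization with seed'' argument, these connections can be completed by adding at most a bounded number of additional loops, each of bounded length inside the slab, an event of positive probability thanks to the second layer. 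Finally, placing such boxes on a two-dimensional grid inside $\mathrm{Slab}(m)$ gives a $1$-dependent site percolation process of arbitrarily high density, which percolates by Liggett--Schonmann--Stacey, and the associated cluster in the loop model gives an infinite cluster in the slab using only loops of length $\leq 2m$.

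The main obstacle will be the sprinkling / block--renormalization step: unlike Bernoulli percolation, the loop model has genuinely long--range correlations coming from long loops, and the truncation $|\ell|\leq 2m$ removes not just ``rare'' events but loops that may carry a non--trivial fraction of the connectivity measure at scale $m$. The argument must therefore quantify that the \emph{restricted} loop measure at intensity $\alpha'$ already produces the crossing event with probability close to $1$ in the bulk of the box, leaving the independent sprinkling layer only to glue macroscopic pieces across a thin corridor, where short loops of length $O(m)$ are sufficient. This is analogous to, but slightly delicate compared with, the Grimmett--Marstrand setting.

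\textbf{Part (i): Strict monotonicity $\alpha_c^{(2m+2)}(\kappa) < \alpha_c^{(2m)}(\kappa)$.} My plan is an essential enhancement / differentiability argument in the spirit of Aizenman--Grimmett. Couple $\mathcal{L}^{\leq 2m}_{\alpha,\kappa}$ and $\mathcal{L}^{\leq 2m+2}_{\alpha,\kappa}$ via the natural Poisson inclusion and consider the two--parameter family $(\alpha,\beta)\mapsto \mathcal{L}^{\leq 2m}_{\alpha,\kappa}\cup\mathcal{L}^{=2m+2}_{\beta,\kappa}$, where $\mathcal{L}^{=2m+2}_{\beta,\kappa}$ is an independent Poisson ensemble of loops of length exactly $2m+2$. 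The critical curve $\alpha_c(\beta)$ in the $(\alpha,\beta)$--plane is concave (standard finite energy / Russo--type argument) and at $\beta=0$ equals $\alpha_c^{(2m)}(\kappa)$, while at $\beta=\alpha_c^{(2m)}(\kappa)$ it equals $\alpha_c^{(2m+2)}(\kappa)$. It suffices to show that the derivative of $\alpha_c(\beta)$ at $0$ is strictly negative, which follows from Russo's formula provided adding a single loop of length $2m+2$ has strictly positive probability of being \emph{pivotal} for some connection event. This is easy to exhibit locally on $\mathbb{Z}^d$ for $d\geq 3$: there are finite configurations where two clusters of short loops are separated by a gap only bridgeable by a loop of length exactly $2m+2$, and this configuration occurs in a box with positive probability at any $\alpha>0$.

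The only real obstacle in (i) is to make the pivotality argument rigorous in the loop setting, where Russo's formula must be stated for Poisson intensities rather than Bernoulli parameters; this is by now standard for loop soups but requires setting up the appropriate differentiation along the intensity measure.
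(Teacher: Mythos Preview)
Your overall framework for both parts matches the paper's: an Aizenman--Grimmett style enhancement argument for (i), and Grimmett--Marstrand block renormalization for (ii). However, there are genuine gaps in both sketches.

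\textbf{Part (i).} What you wrote is not sufficient for strict inequality. Exhibiting a configuration where a single length-$(2m{+}2)$ loop is pivotal only gives $\partial_\beta\theta_n>0$; it does not yield $\frac{d}{d\beta}\alpha_c(\beta)<0$. What the enhancement argument actually requires---and what the paper proves as Lemma~\ref{lem: derivative compare}---is the uniform derivative comparison
\[
\frac{\partial}{\partial\beta_i}\theta_n(\beta)\;\leq\;C(i,j)\,\frac{\partial}{\partial\beta_j}\theta_n(\beta)\qquad\text{for }i<j,
\]
where $\beta_i$ is the intensity on loops of length $2i$. Your description of the pivotal configuration is also backwards: there is no useful sense in which a gap on $\mathbb{Z}^d$ is ``only bridgeable by a loop of length exactly $2m{+}2$''---adjacent clusters are bridged by loops of length $2$. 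The correct (and much simpler) observation is the opposite one: for every loop $\ell$ with $|\ell|=2i$ there exists a loop $\eta$ with $|\eta|=2j>2i$ covering the same vertex set, so whenever $\ell$ is pivotal for $\{0\leftrightarrow\partial B(n)\}$ so is $\eta$. Combined with Russo's formula for Poisson intensities (Lemma~\ref{lem: russo formula}), this gives the derivative inequality directly and then $(1+\epsilon)\alpha_c^{(2m+2)}\leq\alpha_c^{(2m)}$ by integrating along a suitable path. Your appeal to concavity of the critical curve is neither standard nor needed.

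\textbf{Part (ii).} The template is right, but two steps do not go through as written. First, your truncation step via dominated convergence is not available: the box scale $N$ and the truncation/slab scale $m$ must tend to infinity together in the renormalization, so one cannot fix $N$, send $m\to\infty$, and invoke finiteness of $\mu_\kappa(\{\ell\ni 0,|\ell|>2m\})$. Second, the step you label ``standard'' is exactly where the long-range correlations of the $\kappa=0$ loop soup bite, and it is the main technical content here. The paper's sprinkling lemma (Lemma~\ref{lem: sprinkling}) increases the intensity by $\delta$ only on loops contained in $B(2n{-}1)$ that touch both the current cluster $R$ and $\partial B(n{-}1)$; the key estimate is
\[
1-\mathbb{P}[G\mid H]\;\leq\;\bigl(\mathbb{P}[H]\bigr)^{\min(\delta/|\gamma|_\infty,\,1)},
\]
obtained by a H\"older inequality on the Poisson void probabilities. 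Making $\mathbb{P}[H]$ small in turn relies on the one-loop annulus bound $\mu(\ell\cap B(N)\neq\emptyset,\,\ell\cap\partial B(M)\neq\emptyset)\leq C(N/M)^{d-2}$ (Lemma~\ref{lem: mu annulus}) and a two-scale argument showing that after sprinkling the cluster of $B(m)$ produces many excursions across $\partial B(n)$ (Lemmas~\ref{lem: many trancated loops}--\ref{lem: a connection to a seed}). You correctly identify this as ``the main obstacle,'' but your outline supplies no mechanism to overcome it; a Liggett--Schonmann--Stacey comparison alone does not address the conditional sprinkling step, which is not standard for loop soups.
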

Such approximation by loop percolation according to diameters of loops and the strict increase of the threshold are established by T. Lupu in the context of two dimensional SRW loop soup in \cite[Theorem~3,Proposition~4.1]{loop-perc-H}. The proof of the first part is an analogue of \cite[Lemma~3.5]{GrimmettMR1707339}, based on a comparison of derivatives, see Lemma~\ref{lem: derivative compare}. The proof of the second part is a modification of the argument for Bernoulli bond percolation (see \cite[Theorem~7.2]{GrimmettMR1707339}). We use the same renormalization schema, which is based on a stochastic comparison with Bernoulli site percolation. The key step is the sprinkling lemma (Lemma~\ref{lem: sprinkling}), which states that certain connection events appear with high probabilities after we increase \emph{locally} the intensity parameter. It is an analogue of the sprinkling lemma (\cite[Lemma~7.17]{GrimmettMR1707339}) for Bernoulli bond percolation. The main difficulty appears when $\kappa=0$. There is a long range correlation of at most polynomial decay which requires additional arguments. Our variant of sprinkling lemma is given in Section 4. For its proof, we use the independence in the loop soup and an upper bound of $\mu(\ell:\ell\cap A\neq\emptyset,\ell\cap B\neq\emptyset)$ for two disjoint vertex sets $A$ and $B$ (see \cite[Lemma~2.7 a)]{loop-perc-Zd}). 

We expect that certain quantities of the non-truncated models can be approximated by the truncated models. However, Theorem~\ref{thm: truncated loop percolation} ii) is not trivially true as certain threshold fails to converge, say $\alpha_{\#}$, corresponding to the finiteness of expected size of clusters. Indeed, by following the argument of M. Aizenman and D. J. Barsky (\cite{AizenmanBarskyMR874906}), the threshold $\alpha_{\#}^{(m)}$ for the truncated model coincides with $\alpha_c^{(m)}$, which converges to $\alpha_c$. However, by \cite[Theorem~1.4 a)]{loop-perc-Zd}, $\alpha_{\#}(\kappa)=0$ for $d=3$ or $4$ and $\kappa=0$. Thus, $\lim\limits_{m\rightarrow\infty}\alpha_{\#}^{(m)}\neq\alpha_{\#}$.

Next, we present several corollaries of Theorem~\ref{thm: truncated loop percolation}. The following one is based on a comparison of intensity measures.
\begin{corollary}\label{cor: continuity of critical curve}
 For $d\geq 3$, $\alpha\rightarrow\kappa_c(\alpha)$ is a strictly increasing continuous function on $[\alpha_c,\infty[$ and $\kappa_c(\alpha)=0$ for $\alpha\in]0,\alpha_c]$. Consequently, $\alpha_c=\inf\{\alpha\geq 0:\kappa_c(\alpha)>0\}$.
\end{corollary}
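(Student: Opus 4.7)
The plan is to combine the truncation approximation $\alpha_c^{(2m)}(\kappa)\downarrow\alpha_c(\kappa)$ from Theorem~\ref{thm: truncated loop percolation}~ii) with the identity $\mu_\kappa(\ell)=(1+\kappa)^{-|\ell|}\mu_0(\ell)$, which yields a direct stochastic comparison of $\mathcal{L}_{\alpha,\kappa}$ for different values of $(\alpha,\kappa)$.

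First, $\kappa_c(\alpha)=0$ on $(0,\alpha_c]$ is immediate: since $(1+\kappa)^{-|\ell|}$ is non-increasing in $\kappa\geq 0$, the intensity of $\mathcal{L}_{\alpha,\kappa}$ is pointwise dominated by that of $\mathcal{L}_{\alpha,0}$, giving $\alpha_c(\kappa)\geq\alpha_c$ for all $\kappa\geq 0$. Hence every $(\alpha,\kappa)$ with $\alpha\leq\alpha_c$ and $\kappa\geq 0$ is subcritical, and $\kappa_c(\alpha)=0$; the identity $\alpha_c=\inf\{\alpha:\kappa_c(\alpha)>0\}$ will then follow once the strict increase of $\kappa_c$ on $[\alpha_c,\infty)$ is established. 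The workhorse for the remaining claims is the following comparison lemma: for $m\geq 1$ and $0\leq\kappa_1\leq\kappa_2$ with $\alpha_2/\alpha_1\geq[(1+\kappa_2)/(1+\kappa_1)]^{2m}$, one has $\alpha_2(1+\kappa_2)^{-n}\geq\alpha_1(1+\kappa_1)^{-n}$ for all $n\leq 2m$, so $\mathcal{L}_{\alpha_2,\kappa_2}^{\leq 2m}$ stochastically dominates $\mathcal{L}_{\alpha_1,\kappa_1}^{\leq 2m}$. A direct corollary is the sandwich $\alpha_c^{(2m)}(\kappa_1)\leq\alpha_c^{(2m)}(\kappa_2)\leq\alpha_c^{(2m)}(\kappa_1)[(1+\kappa_2)/(1+\kappa_1)]^{2m}$ for $\kappa_1<\kappa_2$, which gives continuity and monotonicity of $\kappa\mapsto\alpha_c^{(2m)}(\kappa)$ on $[0,\infty)$.

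For the strict increase of $\kappa_c$ on $[\alpha_c,\infty)$, fix $\alpha_c\leq\alpha_1<\alpha_2$. If $\alpha_1=\alpha_c$ then $\kappa_c(\alpha_1)=0$; Theorem~\ref{thm: truncated loop percolation}~ii) produces $m$ with $\alpha_c^{(2m)}(0)<\alpha_2$, and the comparison lemma then makes $(\alpha_2,\kappa)$ supercritical for every $\kappa\in\bigl(0,(\alpha_2/\alpha_c^{(2m)}(0))^{1/(2m)}-1\bigr)$, so $\kappa_c(\alpha_2)>0$. If $\alpha_1>\alpha_c$ one picks $\kappa_1<\kappa_c(\alpha_1)$ so that $(\alpha_1,\kappa_1)$ is supercritical, then $m$ with $\alpha_c^{(2m)}(\kappa_1)<\alpha_1$; the comparison lemma then yields $(\alpha_2,\kappa_2)$ supercritical for any $\kappa_2\leq(1+\kappa_1)(\alpha_2/\alpha_1)^{1/(2m)}-1$. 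The main obstacle, which I expect to be the delicate step, is that for this upper bound to strictly exceed $\kappa_c(\alpha_1)$ one needs $\kappa_1$ close to $\kappa_c(\alpha_1)$, which in turn forces the minimal admissible $m$ to grow; balancing the two requires the continuity of $\alpha_c^{(2m)}$ in $\kappa$ just proved, together with a careful simultaneous choice of $\kappa_1$ and $m$ depending on $(\alpha_1,\alpha_2)$.

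Finally, for continuity of $\kappa_c$, the supercritical region $S=\{(\alpha,\kappa):\alpha>\alpha_c(\kappa)\}$ is open in $(0,\infty)\times[0,\infty)$: any $(\alpha_0,\kappa_0)\in S$ admits $m$ with $\alpha_0>\alpha_c^{(2m)}(\kappa_0)$, and the continuity of $\alpha_c^{(2m)}(\cdot)$ then supplies an open rectangle around $(\alpha_0,\kappa_0)$ still in $S$. Combined with the monotonicity of $\kappa_c$, this openness yields both left and right continuity of $\kappa_c$ on $[\alpha_c,\infty)$ by a standard argument: a jump of $\kappa_c$ at some $\alpha_0$ would produce a level $\kappa^{*}$ such that $(\alpha,\kappa^{*})\in S$ on one side of $\alpha_0$ yet $(\alpha_0,\kappa^{*})\notin S$, contradicting the openness of $S$.
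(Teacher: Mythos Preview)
Your proposal has two genuine gaps, both of which the paper avoids by invoking a direct (non-truncated) intensity comparison that you did not use.

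\textbf{Gap 1: the continuity argument.} Openness of the supercritical region $S$ together with monotonicity of $\kappa_c$ gives only \emph{left}-continuity of $\kappa_c$, not right-continuity. Concretely, suppose $\kappa_c(\alpha_0)<\kappa_c(\alpha_0^+)$ and pick $\kappa^*\in(\kappa_c(\alpha_0),\kappa_c(\alpha_0^+))$. Then $(\alpha_0,\kappa^*)\notin S$ while $(\alpha,\kappa^*)\in S$ for all $\alpha>\alpha_0$; this is perfectly compatible with $S$ being open (the boundary point $(\alpha_0,\kappa^*)$ simply lies outside $S$). So your ``jump contradicts openness'' step fails in this direction. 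The paper instead cites a sandwich inequality from \cite[Proposition~7.4]{loop-perc-Zd} obtained by comparing the \emph{full} loop soups: for $\kappa_1<\kappa_2$ one has $\alpha_1(1+\kappa_1)^{-n}\geq\alpha_2(1+\kappa_2)^{-n}$ for \emph{all} $n\geq 2$ as soon as it holds at $n=2$, so $\mathcal{L}_{\alpha_1,\kappa_1}$ dominates $\mathcal{L}_{\alpha_2,\kappa_2}$ whenever $\alpha_1(1+\kappa_1)^{-2}\geq\alpha_2(1+\kappa_2)^{-2}$. This yields $\alpha_c(\kappa_1)\leq\alpha_c(\kappa_2)\leq\alpha_c(\kappa_1)\bigl[(1+\kappa_2)/(1+\kappa_1)\bigr]^2$, hence continuity of $\kappa\mapsto\alpha_c(\kappa)$ directly, without truncation.

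\textbf{Gap 2: the ``delicate step'' in strict increase.} Your balancing argument does not close: as $\kappa_1\uparrow\kappa_c(\alpha_1)$ one has $\alpha_c(\kappa_1)\uparrow\alpha_1$ (by the continuity just mentioned), so the minimal admissible $m=m(\kappa_1)$ with $\alpha_c^{(2m)}(\kappa_1)<\alpha_1$ genuinely tends to infinity, and then $(\alpha_2/\alpha_1)^{1/(2m)}\to 1$, so the upper bound $(1+\kappa_1)(\alpha_2/\alpha_1)^{1/(2m)}-1$ need not exceed $\kappa_c(\alpha_1)$. The continuity of $\alpha_c^{(2m)}(\cdot)$ for each fixed $m$ does not help, because the relevant $m$ itself depends on $\kappa_1$. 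The paper's route sidesteps this entirely: once $\alpha_c$ is continuous, $\alpha_c(\kappa_c(\alpha_1))=\lim_{\kappa\uparrow\kappa_c(\alpha_1)}\alpha_c(\kappa)\leq\alpha_1$, so one can apply Theorem~\ref{thm: truncated loop percolation}~ii) \emph{at} $\kappa=\kappa_c(\alpha_1)$ itself, pick a single $m$ with $\alpha_c^{(2m)}(\kappa_c(\alpha_1))<(\alpha_1+\alpha_2)/2$, and then the comparison lemma immediately produces $\kappa_0>\kappa_c(\alpha_1)$ with $(\alpha_2,\kappa_0)$ supercritical. No balancing is needed.

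In short, the missing ingredient is the untruncated domination $\mathcal{L}_{\alpha_1,\kappa_1}\succeq\mathcal{L}_{\alpha_2,\kappa_2}$ for $\kappa_1<\kappa_2$ and $\alpha_1(1+\kappa_1)^{-2}\geq\alpha_2(1+\kappa_2)^{-2}$; adding it repairs both gaps at once.
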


For $\kappa=0$, in the sub-critical regime, we have at most polynomial decay of one-arm connectivity by considering one loop connection, see \cite[Theorem~1.2]{loop-perc-Zd}. In contrast, for $\kappa=0$ and $\alpha>\alpha_c$, we have exponential decay for the diameter of a finite cluster. Thus, this loop percolation model provides an example in which we have different behavior of finite clusters in sub-critical and super-critical regime.
\begin{corollary}\label{cor: exponential tail}
 For $d\geq 3$, $\kappa\geq 0$ and $\alpha>\alpha_c(\kappa)$, there exist constants $c=c(d,\alpha,\kappa)>0$, $C=C(d,\alpha,\kappa)<\infty$ and $L_0=L_0(d,\alpha,\kappa)<\infty$ such that for $L\geq L_0$ and $n\geq 1$,
 \begin{equation*}
  \mathbb{P}[\#\mathcal{C}_{\alpha,\kappa}^{\leq L}(0)<\infty,\mathcal{C}_{\alpha,\kappa}^{\leq L}(0)\cap\partial B(n)\neq\emptyset]\leq C(d,\alpha,\kappa)e^{-c(d,\alpha,\kappa)n},
 \end{equation*}
 where $\mathcal{C}_{\alpha,\kappa}^{\leq L}=\{x\in\mathbb{Z}^d:0\overset{\mathcal{L}_{\alpha,\kappa}^{\leq L}}{\longleftrightarrow}x\}$.
\end{corollary}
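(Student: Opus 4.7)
The plan is to reduce the statement to exponential decay of finite clusters in a highly supercritical Bernoulli site percolation via a block renormalization in the spirit of Grimmett--Marstrand. The key benefit of the truncation $|\ell|\leq L$ is that loops in $\mathcal{L}_{\alpha,\kappa}^{\leq L}$ have diameter at most $L/2$, so the truncated process is finite-range and admits a stochastic comparison with i.i.d.\ site percolation after coarse-graining.

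First I would use Theorem~\ref{thm: truncated loop percolation}~ii) to pick $m=m(d,\alpha,\kappa)\geq 1$ with $\alpha>\widetilde{\alpha}_c^{(2m)}(\kappa)$, set $L_0:=2m$, and observe that for $L\geq L_0$ the truncated process $\mathcal{L}_{\alpha,\kappa}^{\leq L}$ contains, in every translate $T$ of $\mathrm{Slab}(2m)$, the supercritical in-slab process $\{\ell\in\mathcal{L}_{\alpha,\kappa}^{\leq 2m}:\ell\subset T\}$. Then, for a large $N\gg L$ to be chosen, I partition $\mathbb{Z}^d$ into cubes $B_z=zN+[0,N)^d$ and enlarge to $\widehat{B}_z=zN+[-N,2N)^d$. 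Call $z$ \emph{good} if $\widehat{B}_z$ contains a cluster $K_z$ of $\mathcal{L}_{\alpha,\kappa}^{\leq L}$ satisfying (a) $K_z$ traverses each of the $2d$ face-slabs of $\widehat{B}_z$ of thickness $2m+1$; (b) $K_z\cap K_{z'}\neq\emptyset$ in $\widehat{B}_z\cap\widehat{B}_{z'}$ for every lattice neighbour $z'$ of $z$; (c) every cluster of $\mathcal{L}_{\alpha,\kappa}^{\leq L}$ of diameter $\geq N/10$ inside $\widehat{B}_z$ is a subset of $K_z$. Slab percolation together with a standard uniqueness-of-the-crossing-cluster argument gives $\mathbb{P}[G_z]\to 1$ as $N\to\infty$.

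Since $G_z$ depends only on loops whose support meets the $(L/2)$-enlargement of $\widehat{B}_z$, the family $(\mathbf{1}_{G_z})_{z\in\mathbb{Z}^d}$ is finitely dependent (say $5$-dependent) as soon as $N\geq 2L$. By the Liggett--Schonmann--Stacey comparison theorem, $N$ may be chosen so that $(\mathbf{1}_{G_z})_z$ stochastically dominates i.i.d.\ Bernoulli site percolation on $\mathbb{Z}^d$ with parameter $p$ arbitrarily close to $1$. Now, on the event of the corollary, any self-avoiding path in $\mathcal{C}_{\alpha,\kappa}^{\leq L}(0)$ from $0$ to $\partial B(n)$ traverses at least $\lfloor n/N\rfloor$ successive macro-cubes $B_{z_0},\ldots,B_{z_K}$; by (c) each good $z_i$ contributes $K_{z_i}\subset\mathcal{C}_{\alpha,\kappa}^{\leq L}(0)$, and by (b) the $K_{z_i}$ of neighbouring good cubes are already linked, so finiteness of $\mathcal{C}_{\alpha,\kappa}^{\leq L}(0)$ forces every $z_i$ to lie outside the a.s.\ unique infinite renormalized good cluster. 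For $p$ close to $1$ the probability of such an extended excursion outside the infinite cluster decays exponentially in $n$ by classical results on highly supercritical Bernoulli site percolation, delivering the bound $Ce^{-cn}$.

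The main obstacle is the design of $G_z$ so that (a)--(c) hold simultaneously with $\mathbb{P}[G_z]\to 1$; this is the classical Grimmett--Marstrand ``uniqueness of the crossing cluster'' construction transplanted to the slab-percolating loop soup. The truncation removes the long-range dependence that would otherwise be problematic (notably for $\kappa=0$, where non-truncated loops induce polynomial correlations), making the independence reduction in the third paragraph essentially automatic.
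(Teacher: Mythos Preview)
Your approach is valid but takes a considerably more elaborate route than the paper. The paper follows the direct Chayes--Chayes--Newman slab argument (as in \cite[Theorem~8.21]{GrimmettMR1707339}): once Theorem~\ref{thm: truncated loop percolation}~ii) gives percolation by $\{\ell\in\mathcal{L}^{\leq L_0}_{\alpha,\kappa}:\ell\subset S\}$ in each translate $S$ of a slab of width $L_0$, one partitions $\mathbb{Z}^d$ into parallel slabs $S_1,S_2,\ldots$, each carrying its own infinite in-slab cluster $I_j$. Since every $I_j$ lies inside the full infinite cluster of $\mathcal{L}^{\leq L}_{\alpha,\kappa}$, a finite $\mathcal{C}^{\leq L}_{\alpha,\kappa}(0)$ that reaches $\partial B(n)$ must cross $\sim n/L_0$ slabs while avoiding each $I_j$. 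Exploring slab by slab and using that the loops contained in $S_j$ are independent of the loops intersecting $S_j^c$, one gets a uniform bound $<1$ on the conditional probability of each crossing and hence the exponential rate. No block construction, no LSS comparison, and in particular no local-uniqueness lemma are needed.

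Your renormalization scheme, by contrast, front-loads the difficulty into establishing property~(c) (uniqueness of the macroscopic cluster in $\widehat B_z$). This is not the Grimmett--Marstrand construction itself but rather a consequence of slab percolation, proved separately in the paper as Lemma~\ref{lem: local connectness}; you would have to supply that argument. There is also a small circularity in your definition of $G_z$: condition~(b) refers to $K_{z'}$ for neighbours $z'$, so as written $G_z$ is not a function of the configuration in $\widehat B_z$ alone; the standard fix is to define $K_z$ via~(a) and~(c) only and then observe that~(b) follows automatically whenever both $z$ and $z'$ are good, since their spanning clusters must meet in the overlap. What your route does buy is a stronger structural output (the full Antal--Pisztora renormalized picture), which would be useful if you also wanted chemical-distance or volume estimates; for the bare exponential tail, however, the paper's one-line reduction to the slab argument is both shorter and self-contained.
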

We give a remark as an immediate consequence of Corollary~\ref{cor: exponential tail}:
\begin{remark}\label{rem: theta L to theta}
 Under the same assumption and with the same constants in Corollary~\ref{cor: exponential tail}, we have
 \[\sup\limits_{L\geq L_0}|\theta^{(L)}_n(\alpha,\kappa)-\theta^{(L)}(\alpha,\kappa)|\leq Ce^{-cn},\]
 where $\theta^{(L)}_n(\alpha,\kappa)=\mathbb{P}[\mathcal{C}_{\alpha,\kappa}^{\leq L}(0)\cap \partial B(n)\neq\emptyset]$ and $\theta^{(L)}(\alpha,\kappa)=\mathbb{P}[\#\mathcal{C}_{\alpha,\kappa}^{\leq L}(0)=\infty]$. Hence, we see that $\lim\limits_{L\rightarrow\infty}\theta^{(L)}(\alpha,\kappa)=\theta(\alpha,\kappa)$.
\end{remark}
The arguments from Theorem~\ref{thm: truncated loop percolation} ii) to Corollary~\ref{cor: exponential tail} is almost the same as Bernoulli bond percolation, see respectively \cite[Theorem~7.2]{GrimmettMR1707339} for Bernoulli bond percolation. We use the independence between disjoint loop soups and the FKG inequality for loops (see \cite[Section 2.1]{LeJanLemaireMR3263044}).

In \cite[Definition 4.1]{Sapozhnikov2014}, A. Sapozhnikov introduced the notions of regular and very regular balls. By \cite[Claim 4.2]{Sapozhnikov2014}, (very) regular ball is always (very) good. The notion of (very) good balls is introduced by M. Barlow in \cite[Definition 1.7]{BarlowMR2094438}, which is, roughly speaking, an assumption of weak Poincar\'{e} inequalities at large enough scales. This assumption implies certain Gaussian estimates and Harnack inequalities of the random walks on the graph and etc, see \cite[Theorem~5.7(a), Theorem~5.11]{BarlowMR2094438}, \cite[Theorem~2.2, Theorem~3.11]{BarlowHamblyMR2471657} and \cite[Theorem~4]{BenjaminiDuminil-CopinKozmaYadin2014}. For the convenience of the reader, we formulate the notion of (very) regular balls for unweighted graphs in our setting.

\begin{defn}\cite[Definition 4.1]{Sapozhnikov2014}\label{defn: very regular balls}
 Let $C_V$, $C_P$ and $C_W\geq 1$ be fixed constants. Let $G$ be an unweighted graph with the vertex set $V(G)$. For an integer $r\geq 1$ and $x\in V(G)$, we say that a ball $B_G(x,r)$ in $G$ centered at $x$ of radius $r$ is $(C_V, C_P, C_W)$-regular if $\# B_G(x,r)\geq C_Vr^d$ and there
exists a set $\mathcal{C}_{B_G(x,r)}$ such that $B_{G}(x,r)\subset\mathcal{C}_{B_G(x,r)}\subset B_{G}(x,C_W r)$ and for any $A\subset \mathcal{C}_{B_G(x,r)}$ with $\#A\leq\frac{1}{2}\cdot\#\mathcal{C}_{B_G(x,r)}$,
\[\#\partial_{\mathcal{C}_{B_G(x,r)}}A\geq \frac{1}{r\sqrt{C_P}}\cdot \#A.\]
We say that $B_G(x,R)$ is $(C_V, C_P, C_W)$-very regular if there exists $N_{B_G(x,R)}\leq R^{\frac{1}{d+2}}$ such that $B_G(y,r)$ is $(C_V, C_P, C_W)$-regular whenever $B_{G}(y,r)\subset B_{G}(x,R)$ and $N_{B_G(x,R)}\leq r\leq R$.
\end{defn}

We prove that the big balls inside the infinite open cluster are very regular with high probabilities:
\begin{theorem}\label{thm: good balls}
 Let $R\geq 1$ be an integer. There exists some $\epsilon\in]0,\frac{1}{d+2}[$. There exist $C_V$, $C_P=C_P(d,\alpha,L_0)<\infty$, $C_W=C_W(L_0)<\infty$, $c=c(d,\alpha,\epsilon)$ and $C=C(d,\alpha,\epsilon)$, such that for all $R\geq 1$,
 \begin{equation}\label{eq: tgb1}
 \mathbb{P}\left[\left.\begin{array}{l}
                        B_{\mathcal{S}_{\infty}}(0,R)\text{ is }(C_V, C_P, C_W)\text{-very regular}\\
                        \text{with }N_{B_{\mathcal{S}_{\infty}}(0,R)}\leq R^{\epsilon}.
                       \end{array}
\right|0\in\mathcal{S}_{\infty}\right]\geq 1-C\exp\{-\exp\{c\sqrt{\log R}\}\},
 \end{equation}
 where $\mathcal{S}_{\infty}$ is the unique infinite open cluster and for $x\in\mathcal{S}_{\infty}$, we denote by $B_{\mathcal{S}_{\infty}}(x,R)$ the ball inside $\mathcal{S}_{\infty}$ of the center $x$ and the radius $R$.
\end{theorem}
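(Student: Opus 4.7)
The plan is to reduce the statement to the abstract framework of \cite{Sapozhnikov2014} by exhibiting, with very high probability, a ``skeleton'' subgraph of the infinite cluster that is explicitly very regular. Fix $\alpha>\alpha_c$ and choose $\alpha_c<\alpha_0<\alpha_1<\alpha$. By Theorem~\ref{thm: truncated loop percolation}~ii) there is $L=L_0(d,\alpha)$ such that $\widetilde{\alpha}_c^{(L)}<\alpha_0$, so the truncated soup $\mathcal{L}_{\alpha_0}^{\leq L}$ already percolates in every slab of thickness $L$. Decompose the loop soup into three independent pieces,
\[
\mathcal{L}_\alpha=\mathcal{L}_{\alpha_0}^{\leq L}\,\sqcup\,(\mathcal{L}_{\alpha_1}\setminus\mathcal{L}_{\alpha_0})^{\leq L}\,\sqcup\,\mathcal{L}',
\]
where $\mathcal{L}'$ collects the long loops ($|\ell|>L$) together with the short loops above level $\alpha_1$. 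The first piece will play the role of a finite-range skeleton; the second is a short-range sprinkling used to glue skeleton components; the third contributes all long-range correlations that are specific to $\kappa=0$ and, importantly, can only add edges.

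\textbf{Renormalization.} Tile $\mathbb{Z}^d$ by boxes $B_z=z\ell+[0,\ell)^d$ with $\ell\gg L$. Call $B_z$ \emph{good} if in $\mathcal{L}_{\alpha_0}^{\leq L}$ restricted to $3B_z$ there is a unique cluster $G_z$ of volume $\geq c_V\ell^d$, touching every face of $B_z$ and satisfying a Cheeger-type isoperimetric inequality at every scale between $L$ and $\ell$, and if the short sprinkling further connects $G_z$ to $G_{z'}$ for each neighbour $z'$. The slab percolation of Theorem~\ref{thm: truncated loop percolation}~ii), Corollary~\ref{cor: exponential tail} applied in slabs of thickness $L$, the sprinkling lemma (Lemma~\ref{lem: sprinkling}), and the FKG inequality for loops together yield
\[
\mathbb{P}[B_z\text{ is good}]\geq 1-\exp\bigl(-c(\ell/L)^{d-1}\bigr).
\]
Since the first two pieces of the decomposition both have range $L$, the good-box field is $O(1)$-dependent, hence (by Liggett--Schonmann--Stacey) stochastically dominates an independent Bernoulli site percolation on $\mathbb{Z}^d$ of density arbitrarily close to $1$ as $\ell\to\infty$. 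On the event that every box intersecting $B(0,C_W R)$ is good, the union $\bigcup_z G_z$ is the set $\mathcal{C}_{B_G(0,R)}$ demanded by Definition~\ref{defn: very regular balls}: the volume lower bound and the inclusion $\mathcal{C}_{B_G(0,R)}\subset B_{\mathcal{S}_\infty}(0,C_W R)$ are immediate, while the isoperimetric inequality transfers from the block scale to the microscopic scale by Pisztora's argument (combine block isoperimetry on the renormalized lattice with the internal isoperimetry of each $G_z$). The soup $\mathcal{L}'$ only adds edges and therefore preserves all these properties.

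\textbf{Multi-scale iteration and the tail.} The double exponential $\exp\{-\exp\{c\sqrt{\log R}\}\}$ is produced by iterating the construction above at $k\asymp\sqrt{\log R}$ geometrically growing scales, as in \cite[\S4]{Sapozhnikov2014}: at the $j$-th scale, ``good'' is redefined to require that a large fraction of the scale-$(j-1)$ sub-boxes be good, so that failure probabilities telescope while the number of macroscopic boxes touching $B(0,R)$ grows only polynomially. The main obstacle I expect is verifying that the good-box field really is finitely dependent in the presence of the long loops in $\mathcal{L}'$; this is handled precisely by the truncation-sprinkling decomposition above, which never uses $\mathcal{L}'$ to establish goodness and merely appeals to monotonicity to conclude that $\bigcup_z G_z\subset\mathcal{S}_\infty$. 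The remaining delicate point is the uniformity in $\ell$ of the sprinkling step, which rests on the two-point loop-soup bound \cite[Lemma~2.7~a)]{loop-perc-Zd} and ultimately on Green's function decay in $d\geq 3$.
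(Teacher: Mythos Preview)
Your overall architecture---truncate, build a finite-range skeleton via Sapozhnikov's renormalization, iterate at $\asymp\sqrt{\log R}$ scales---matches the paper's. The gap is the sentence ``$\mathcal{L}'$ only adds edges and therefore preserves all these properties.'' Neither the inclusion $B_{\mathcal{S}_\infty}(0,R)\subset\mathcal{C}_{B_G(0,R)}$ nor the isoperimetric inequality is monotone under adding edges. Adding the long loops enlarges $B_{\mathcal{S}_\infty}(0,R)$: vertices that lie in \emph{finite} components of the truncated model but are connected to the skeleton by some $\ell\in\mathcal{L}'$ now belong to $B_{\mathcal{S}_\infty}(0,R)$ yet are absent from your $\bigcup_z G_z$, so the required containment fails. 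Even if you enlarge $\mathcal{C}_{B_G(0,R)}$ to absorb them, the isoperimetric inequality can break: the volume of $\mathcal{C}$ grows, new sets $A$ with $\#A\leq\tfrac12\#\mathcal{C}$ become admissible, and those extra vertices may attach through very few edges, producing large $A$ with small $\partial_{\mathcal{C}}A$. Pisztora's transfer argument that you invoke works precisely because in Bernoulli percolation the non-skeleton part of the cluster in each box has negligible volume; here that is the statement you have not proved.

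The paper flags exactly this issue (``isoperimetric inequalities are not a monotone property'') and resolves it with Lemma~\ref{lem: big loop plus small finite cluster}: with probability $1-Ce^{-cn^{d/(d+1)}}$, the set of vertices in $B(n)$ that lie in a finite $\mathcal{L}_\alpha^{\leq L}$-cluster yet are reached by some $\ell\in\mathcal{L}_\alpha^{>L}$ has volume at most $\epsilon n^d$. This is then built into the good event (condition c) of Definition~\ref{defn: H L}) and used in the proof of Lemma~\ref{lem: isoperimetric} to show that the enlarged set $\widetilde{\mathcal{C}}_{K,s,L_0}^{\leq\infty}$ is close enough in volume to the skeleton $\bigcup_x\mathcal{C}_x^{\leq L}$ for the isoperimetry to transfer. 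Proving that lemma is a separate argument combining Corollary~\ref{cor: exponential tail} (finite truncated clusters have exponential tails) with a large-deviation bound on the number of sites covered by long loops (Lemma~\ref{lem: vertices visited by large loops}); it does not follow from the truncation--sprinkling decomposition or from monotonicity.
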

\begin{remark}
 We remark that \eqref{eq: tgb1} implies that big enough balls inside the infinite open clusters are very regular by Borel-Cantelli lemma.
\end{remark}

We would like to emphasize that our results are valid for the whole supercritical regime. However, for many models with long range correlations, such properties are demonstrated under additional assumptions of probabilities of certain events inside annuli (which are believed to be true in the whole supercritical regime). For instance, vacant sets of random interlacement is such a long-range percolation model introduced by A.-S. Sznitman, see \cite{SznitmanMR2680403},\cite{TeixeiraMR2824866}, \cite{DrewitzRathSapozhnikovMR3269990} and \cite{DrewitzRathSapozhnikovMR}.

Finally, let's compare with the result of long range bond percolation in \cite{MeesterSteifMR1405960}, where the continuity of critical values for truncated models is proved under the assumption of ``exponential decay'' of one edge connection probability $\mathbb{P}[\text{edge }\{0,x\}\text{ is open}]$. However, the loop percolation model is not a bond percolation model and we have the continuity of critical values of a sequence of truncated loop models with polynomial decays of one loop connection probabilities $\mathbb{P}[\exists \ell\in\mathcal{L}_{\alpha}:0,x\in\ell]$ when $\kappa=0$.

\emph{Organization of the paper}: We fix some notation in Section 2. Then we prove Theorem~\ref{thm: truncated loop percolation} i), Theorem~\ref{thm: truncated loop percolation} ii), Corollary~\ref{cor: continuity of critical curve}, Corollary~\ref{cor: exponential tail} and Theorem~\ref{thm: good balls} in separate sections.

\section{Definition and notation}
We fix several notation:
\begin{itemize}
 \item $B(r)=\{-\lfloor r\rfloor,\ldots,\lfloor r \rfloor\}^d$ and $B(x,r)=x+B(r)$ for $x\in\mathbb{Z}^d$ and $r\geq 0$.
 \item $\partial B(r)=\{x\in\mathbb{Z}^d:|x|_{\infty}=\lfloor r\rfloor \}$ and $\partial B(x,r)=x+\partial B(r)$ for $x\in\mathbb{Z}^d$ and $r\geq 0$.
 \item For $m\geq 1$ and $\vec{k}=(k_1,\ldots,k_d)\in\mathbb{Z}^d$, we define
 \begin{equation}\label{eq: defn box}
  B^{(m)}(\vec{k})\overset{\text{def}}{=}\{k_1m,k_1m+1,\ldots,(k_1+1)m-1\}\times\cdots\times\{k_dm,k_dm+1,\ldots,(k_d+1)m-1\}.
 \end{equation}
 \item $F(n)=\{x\in\partial B(n):x_1=n\}$.
 \item $T(n)=\{x\in\partial B(n):x_1=n,x_j\geq 0\text{ for }j\geq 2\}$.
 \item $T(m,n)=\bigcup\limits_{j=0}^{2m}\{je_1+T(n)\}$, where $e_1=(1,0,\ldots,0)\in\mathbb{Z}^d$.
 \item For a subgraph $G$ and a vertex $A$ of the vertex set $G$, we define the (inner vertex) boundary
 \[\partial_{G}A=\{x\in A:\exists y\in G\setminus A,\{x,y\}\text{ is an edge in }G\}.\]
 \item For $x\in\mathbb{Z}^d$ and a loop $\ell$, we write $x\in\ell$ if $\ell$ covers the vertex $x$.
 \item Define $\Diam(\ell)\overset{\text{def}}{=}\max\limits_{x,y\in\ell}|x-y|_{\infty}$. 
 \item For a vertex set $A$ and a loop $\ell$, we write $\ell\cap A\neq\emptyset$ if $\exists x\in A$ such that $x\in\ell$.
 \item For two vertex sets $A$ and $B$, we write $A\overset{\ell}{\longleftrightarrow}B$ if $\ell\cap A\neq\emptyset$ and $\ell\cap B\neq\emptyset$.
 \item For a vertex set $A$ and a loop $\ell$, we write $\ell\subset A$ if all the vertices of $\ell$ are contained in $A$.
 \item For a non-negative function $\beta$ on the space of loops, let $\mathcal{L}_{\beta}$ be the loop soup of the intensity measure $\nu(\mathrm{d}\ell)=\beta(\ell)\mu_{\kappa=0}(\mathrm{d}\ell)$. We call the function $\beta$ the \emph{intensity function} of the loop soup.\footnote{Note that $\mathcal{L}_{\alpha,\kappa}$ is a special case as its intensity measure $\alpha\mu_{\kappa}(\mathrm{d}\ell)$ equals $\alpha\left(\frac{1}{1+\kappa}\right)^{-|\ell|}\mu_0(\mathrm{d}\ell)$.}
 \item Let $\mathcal{L}$ be a loop soup. For two vertices $x$ and $y$, we write $x\overset{\mathcal{L}}{\longleftrightarrow} y$ if $x$ and $y$ are in the same open cluster. For two vertex sets $A$ and $B$, the notation $A\overset{\mathcal{L}}{\longleftrightarrow} B$ means that there exist $x\in A$ and $y\in B$ such that $x\overset{\mathcal{L}}{\longleftrightarrow} y$. For a vertex $x$ and a set of vertices $B$, we write $x\overset{\mathcal{L}}{\longleftrightarrow} B$ instead of $\{x\}\overset{\mathcal{L}}{\longleftrightarrow} B$. We use the notation $x\overset{\mathcal{L}}{\longleftrightarrow} \infty$ for the event that there exists an infinite open cluster containing the vertex $x$.
 \item For a subgraph $G$ of $\mathbb{Z}^d$, a loop soup $\mathcal{L}$ and two vertices $x$ and $y$, we write $x\overset{G,\mathcal{L}}{\longleftrightarrow} y$ if $x$ and $y$ are connected by open edges inside $G$ given by $\mathcal{L}$. (Recall that an edge is open with respect to $\mathcal{L}$ if the edge is crossed by some loop inside $\mathcal{L}$.) For two vertex sets $A$ and $B$, the notation $A\overset{G,\mathcal{L}}{\longleftrightarrow} B$ means that there exist $x\in A$ and $y\in B$ such that $x\overset{G,\mathcal{L}}{\longleftrightarrow} y$. For a vertex $x$ and a set of vertices $B$, we write $x\overset{G,\mathcal{L}}{\longleftrightarrow} B$ instead of $\{x\}\overset{G,\mathcal{L}}{\longleftrightarrow} B$.
 \item Let $\beta$ be an intensity function. For a vertex $x$, $\mathcal{C}_{\beta}(x)=\{y:y\overset{\mathcal{L}_{\beta}}{\longleftrightarrow}x\}$. For a vertex set $A$, $\mathcal{C}_{\beta}(A)=\bigcup\limits_{x\in A}\mathcal{C}_{\beta}(x)$.
 \item $\theta_n(\beta)=\mathbb{P}[0\overset{\mathcal{L}_{\beta}}{\longleftrightarrow}\partial B(n)]$ and $\theta(\beta)=\mathbb{P}[0\overset{\mathcal{L}_{\beta}}{\longleftrightarrow}\infty]$. When $\beta(\mathrm{d}\ell)=\alpha\left(\frac{1}{1+\kappa}\right)^{-|\ell|}\mu_0(\mathrm{d}\ell)$, we write $\theta_n(\alpha,\kappa)$ and $\theta(\alpha,\kappa)$. (And we write $\theta_n(\alpha)$ and $\theta(\alpha)$ when $\kappa=0$.)
 \item For a subset $K$ of vertices and a loop soup $\mathcal{L}$, we define
\[(\mathcal{L})_K\overset{\mathrm{def}}{=}\{\ell\in\mathcal{L}:\ell\cap K\neq\emptyset\}\text{ and }(\mathcal{L})^K\overset{\mathrm{def}}{=}\{\ell\in\mathcal{L}:\ell\subset K\}.\]
\end{itemize}

\section{Proof of Theorem~\ref{thm: truncated loop percolation} i)}
\emph{We assume $\kappa=0$ since the argument for $\kappa>0$ is similar.}
For Bernoulli bond percolation, if we add additional edges on the diagonals of cubes, then the critical value strictly decreases, see \cite[Section 3.2]{GrimmettMR1707339} for a precise statement. In this section, we adapt the argument for our loop percolation. The key is a comparison of partial derivatives similar to \cite[Lemma~3.5]{GrimmettMR1707339}:
\begin{lemma}\label{lem: derivative compare}
 Let $\beta:\{\text{loops}\}\rightarrow[0,\infty[$ be an intensity function such that $\beta(\ell)=\beta_i$ for all $\ell$ such that $|\ell|=2i$. Suppose that either $(\beta_i)_i$ is finally zero or $\sum\limits_{0\in\ell}\mu(\ell)<\infty$. Then, for all $i<j$, there exists a finite constant $C=C(i,j)$ such that
 \begin{equation}\label{eq: ldc1}
  \frac{\partial}{\partial\beta_i}\theta_n(\beta)\leq C\frac{\partial}{\partial \beta_j}\theta_n(\beta).
 \end{equation}
\end{lemma}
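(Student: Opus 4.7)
The plan is to apply a Russo/Mecke-type formula for Poisson point processes of loops and then construct an explicit injection from short based loops to long based loops that adds ``out-and-back'' detours. This allows every essential loop of length $2i$ to be dominated by essential loops of length $2j$ with a controlled measure ratio, after which the derivative inequality follows by summing.

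First, decompose $\mathcal{L}_\beta=\bigsqcup_{i\geq 1}\mathcal{L}^{(i)}$ into independent Poisson sub-processes of loops of length $2i$ with intensity $\beta_i\mathbf{1}_{|\ell|=2i}\mu_0$. The stated hypothesis (either $(\beta_i)$ eventually vanishing or $\sum_{\ell\ni 0}\mu_0(\ell)<\infty$) guarantees that each length class contributes finitely many loops through any fixed vertex, so $\theta_n(\beta)$ is smooth in each $\beta_i$. Since $A_n=\{0\leftrightarrow\partial B(n)\}$ is an increasing event, Mecke's identity yields
\begin{equation*}
\frac{\partial\theta_n}{\partial\beta_k}(\beta)=\int_{|\ell|=2k}\mathbb{P}[E_\ell]\,\mu_0(d\ell),\qquad E_\ell:=\{\mathcal{L}_\beta\notin A_n,\;\mathcal{L}_\beta\cup\{\ell\}\in A_n\}.
\end{equation*}
Using the pushforward $\mu_0=\pi_*\dot\mu_0$, this becomes a countable sum over based loops weighted by $\dot\mu_0$.

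Next, fix $i<j$. For a based loop $\dot\ell=(x_1,\ldots,x_{2i})$ of length $2i$ and an ordered tuple $\vec y=(y_1,\ldots,y_{j-i})$ of nearest neighbors of $x_1$ in $\mathbb{Z}^d$, define
\begin{equation*}
\dot\Phi(\dot\ell,\vec y):=(x_1,y_1,x_1,y_2,x_1,\ldots,y_{j-i},x_1,x_2,\ldots,x_{2i}),
\end{equation*}
a sequence of length $2j$ whose consecutive entries differ and whose first and last entries $x_1,x_{2i}$ differ by non-triviality of $\dot\ell$, so $\dot\Phi(\dot\ell,\vec y)$ is a valid non-trivial based loop of length $2j$. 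The map $\dot\Phi$ is injective since $(\dot\ell,\vec y)$ is recovered from the alternating prefix $(x_1,y_1,x_1,\ldots,x_1)$. Crucially, the unbased loop $[\dot\Phi(\dot\ell,\vec y)]$ covers every edge of $[\dot\ell]$, so if $[\dot\ell]$ is essential for $A_n$ in some realization of $\mathcal{L}_\beta$, then so is $[\dot\Phi(\dot\ell,\vec y)]$; hence $E_{[\dot\ell]}\subseteq E_{[\dot\Phi(\dot\ell,\vec y)]}$.

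Finally, the definition of $\dot\mu_0$ gives the elementary identity $\dot\mu_0(\dot\Phi(\dot\ell,\vec y))=\tfrac{i}{j}(2d)^{-2(j-i)}\dot\mu_0(\dot\ell)$; summing over the $(2d)^{j-i}$ admissible tuples $\vec y$ yields $\sum_{\vec y}\dot\mu_0(\dot\Phi(\dot\ell,\vec y))=\tfrac{i}{j(2d)^{j-i}}\dot\mu_0(\dot\ell)$. Combining this with essentiality dominance and injectivity of $\dot\Phi$,
\begin{equation*}
\frac{\partial\theta_n}{\partial\beta_i}=\tfrac{j(2d)^{j-i}}{i}\sum_{\dot\ell,\vec y}\mathbb{P}[E_{[\dot\ell]}]\,\dot\mu_0(\dot\Phi(\dot\ell,\vec y))\leq\tfrac{j(2d)^{j-i}}{i}\sum_{\dot\ell'}\mathbb{P}[E_{[\dot\ell']}]\,\dot\mu_0(\dot\ell')=\tfrac{j(2d)^{j-i}}{i}\frac{\partial\theta_n}{\partial\beta_j},
\end{equation*}
which is \eqref{eq: ldc1} with $C=j(2d)^{j-i}/i$. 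The only delicate point is justifying Mecke's identity (equivalently, differentiability of $\theta_n$ in each $\beta_i$) under the stated integrability hypothesis; once that is in place, the rest is a combinatorial bookkeeping argument rather than a genuine conceptual obstacle.
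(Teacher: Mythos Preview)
Your proof is correct and follows essentially the same strategy as the paper: apply Russo's formula, then for each short loop $\ell$ of length $2i$ exhibit a longer loop of length $2j$ whose range contains that of $\ell$ (hence inherits pivotality), and compare the $\mu_0$-weights. The only real difference is bookkeeping: the paper works at the unbased-loop level, picks a single $\eta$ of length $2j$ covering the same vertex set as $\ell$, and then controls the resulting many-to-one map via the two crude bounds $\sup_{|\eta|=2j}\#\{\ell:|\ell|=2i,\ell\cap\eta\neq\emptyset\}\leq C'(i,j)$ and $\sup_{|\ell|=2i}\mu(\ell)\leq C'(i,j)\inf_{|\eta|=2j}\mu(\eta)$, arriving at $C=(C'(i,j))^2$. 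Your explicit ``out-and-back'' injection $\dot\Phi$ at the based-loop level is a cleaner realization of the same idea and yields the explicit constant $C=j(2d)^{j-i}/i$; this buys you a sharper bound at no extra cost, but conceptually nothing new is happening.
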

\begin{remark}
 For all $i>j$ and $a>0$, there exists $C=C(i,j,a)<\infty$, non-increasing in $a$, such that Equation \eqref{eq: ldc1} holds as long as $\beta_j\geq a$. Since we don't need this result, we omit this part.
\end{remark}

Firstly, let's explain how Lemma~\ref{lem: derivative compare} implies Theorem~\ref{thm: truncated loop percolation} i):
\begin{proof}[Proof of Theorem~\ref{thm: truncated loop percolation} i)]
 Take $\epsilon=\left(\sum\limits_{i=1}^{m}C(i,m+1)\right)^{-1}$, where $(C(i,m+1))_i$ is the same as in Lemma~\ref{lem: derivative compare}. For $t\in[0,1]$, set $\beta_{m+1}(t)=\alpha-\alpha t$, $\beta_i(t)=\alpha+\epsilon\alpha t$ for $i=1,\ldots,m$ and $\beta_{j}(t)=0$ for $j\geq m+2$. Note that $\frac{\partial\theta_n}{\partial \beta_i}\geq 0$ for all $i$. Hence, by Lemma~\ref{lem: derivative compare},
 \[\frac{\mathrm{d}\theta_n(\beta(t))}{\mathrm{d}t}=-\alpha\frac{\partial\theta_n}{\partial\beta_{m+1}}+\epsilon\alpha\sum\limits_{i=1}^{m}\frac{\partial\theta_n}{\partial\beta_{i}}\leq 0.\]
 Thus, $\theta_n(\beta(0))\geq \theta_n(\beta(1))$. By taking $n\rightarrow\infty$, we see that $\theta(\beta(0))\geq \theta(\beta(1))$, which implies that $(1+\epsilon)\alpha_c^{(2m+2)}\leq \alpha_c^{(2m)}$. The proof follows from the fact that $\alpha_c^{(2m)}>0$ for $m\geq 1$.
\end{proof}

Next, to prove Lemma~\ref{lem: derivative compare}, we need a version of Russo's formula. Its proof is basically the same as in Bernoulli bond percolation (see \cite[Theorem~7.2]{GrimmettMR1707339}) and we leave it to the reader. A more general Russo's formula for Poisson point process was given by S. A. Zuev \cite{ZuevMR1220977}.
\begin{lemma}[Russo's formula]\label{lem: russo formula}
 Let $\beta:\{loops\}\rightarrow[0,\infty[$ be an intensity function. For a loop $\ell$,
 \[\frac{\partial\mathbb{P}[0\overset{\mathcal{L}_{\beta}}{\longleftrightarrow}\partial B(n)]}{\partial\beta(\ell)}
  =\mu(\ell)\mathbb{P}\left[\mathcal{C}_{\beta}(0)\overset{\ell}{\longleftrightarrow}\mathcal{C}_{\beta}(\partial B(n)),\mathcal{C}_{\beta}(0)\cap\partial B(n)=\emptyset\right].\]
\end{lemma}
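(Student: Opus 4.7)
The approach I would take is the standard Poisson-thinning argument, which isolates the dependence of $\mathcal{L}_{\beta}$ on the single coordinate $\beta(\ell)$. By the restriction/superposition property of Poisson point processes, I can write
\[
 \mathcal{L}_{\beta}=\mathcal{L}^{-\ell}+N_{\ell}\,\delta_{\ell},
\]
where $N_{\ell}$ is Poisson with parameter $\beta(\ell)\mu(\ell)$, and $\mathcal{L}^{-\ell}$ is the loop soup obtained by removing all copies of $\ell$. Crucially, $N_{\ell}$ is independent of $\mathcal{L}^{-\ell}$, and the law of $\mathcal{L}^{-\ell}$ does not depend on $\beta(\ell)$.

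The next step is to decompose the event $E_{n}=\{0\overset{\mathcal{L}_{\beta}}{\longleftrightarrow}\partial B(n)\}$ according to what happens in $\mathcal{L}^{-\ell}$. Let $\mathcal{C}^{-\ell}(\cdot)$ denote clusters in $\mathcal{L}^{-\ell}$ and consider the three disjoint cases:
\[
 A=\bigl\{\mathcal{C}^{-\ell}(0)\cap\partial B(n)\neq\emptyset\bigr\},\qquad
 B=\bigl\{\mathcal{C}^{-\ell}(0)\cap\partial B(n)=\emptyset,\ \mathcal{C}^{-\ell}(0)\overset{\ell}{\longleftrightarrow}\mathcal{C}^{-\ell}(\partial B(n))\bigr\}.
\]
Since adding any positive number of copies of the single loop $\ell$ opens exactly the same edges, on $A$ the event $E_{n}$ always holds, on $B$ it holds iff $N_{\ell}\geq 1$, and otherwise it fails. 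Hence
\[
 \mathbb{P}[E_{n}]=\mathbb{P}[A]+\bigl(1-e^{-\beta(\ell)\mu(\ell)}\bigr)\mathbb{P}[B].
\]
Since $\mathbb{P}[A]$ and $\mathbb{P}[B]$ do not depend on $\beta(\ell)$, differentiating yields
\[
 \frac{\partial\mathbb{P}[E_{n}]}{\partial\beta(\ell)}=\mu(\ell)\,e^{-\beta(\ell)\mu(\ell)}\mathbb{P}[B].
\]

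The final step is to rewrite the right-hand side in terms of clusters of the full soup $\mathcal{L}_{\beta}$, as in the statement. The key observation is that on the event $\{\mathcal{C}_{\beta}(0)\cap\partial B(n)=\emptyset,\ \mathcal{C}_{\beta}(0)\overset{\ell}{\longleftrightarrow}\mathcal{C}_{\beta}(\partial B(n))\}$, no copy of $\ell$ can belong to $\mathcal{L}_{\beta}$: if it did, the two vertex sets $\ell$ meets would lie in the same cluster of $\mathcal{L}_{\beta}$, contradicting $\mathcal{C}_{\beta}(0)\cap\partial B(n)=\emptyset$. Therefore this event equals $B\cap\{N_{\ell}=0\}$, on which moreover $\mathcal{C}_{\beta}=\mathcal{C}^{-\ell}$. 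By the independence of $N_{\ell}$ and $\mathcal{L}^{-\ell}$,
\[
 \mathbb{P}\bigl[\mathcal{C}_{\beta}(0)\overset{\ell}{\longleftrightarrow}\mathcal{C}_{\beta}(\partial B(n)),\ \mathcal{C}_{\beta}(0)\cap\partial B(n)=\emptyset\bigr]=e^{-\beta(\ell)\mu(\ell)}\mathbb{P}[B],
\]
which, after multiplying by $\mu(\ell)$, matches the derivative computed above.

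There is no serious obstacle: the argument is completely routine once the Poisson decomposition is set up, and the main point requiring a little care is merely the translation between the ``external'' clusters $\mathcal{C}^{-\ell}$ and the full clusters $\mathcal{C}_{\beta}$, which is handled by the observation in the last paragraph. The only regularity concern, namely that $\mathbb{P}[E_{n}]$ is differentiable as a function of $\beta(\ell)$, is immediate from the explicit affine-plus-exponential form of the decomposition.
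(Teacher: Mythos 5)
Your proof is correct, and it is exactly the standard Poisson analogue of Russo's formula that the paper has in mind: the paper omits the proof, remarking only that it is ``basically the same as in Bernoulli bond percolation'' and citing Zuev's general Russo formula for Poisson processes, and your decomposition $\mathcal{L}_{\beta}=\mathcal{L}^{-\ell}+N_{\ell}\delta_{\ell}$ with the affine-in-$e^{-\beta(\ell)\mu(\ell)}$ formula is precisely that intended argument. The translation between clusters of $\mathcal{L}^{-\ell}$ and of $\mathcal{L}_{\beta}$ via the observation that the pivotal-type event forces $N_{\ell}=0$ is handled correctly, so nothing is missing.
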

Finally, we end this section by proving Lemma~\ref{lem: derivative compare}.
\begin{proof}[Proof of Lemma~\ref{lem: derivative compare}]
 We fix $i<j$. By Russo's formula,
 \begin{equation}\label{eq: ldc2}
  \frac{\partial \theta_n(\beta)}{\partial \beta_i}=\sum\limits_{\ell:|\ell|=2i}\mu(\ell)\mathbb{P}\left[\mathcal{C}_{\beta}(0)\overset{\ell}{\longleftrightarrow}\mathcal{C}_{\beta}(\partial B(n)),\mathcal{C}_{\beta}(0)\cap\partial B(n)=\emptyset\right].
 \end{equation}
 For $i,j\geq 1$, there exists $C'(i,j)<\infty$ such that
 \begin{equation}\label{eq: ldc3}
  \sup\limits_{\eta:|\eta|=2j}\#\{\ell:|\ell|=2i,\eta\cap\ell\neq\emptyset\}<C'(i,j),
 \end{equation}
 \begin{equation}\label{eq: ldc4}
  \sup\limits_{\ell:|\ell|=2i}\mu(\ell)\leq C'(i,j)\inf\limits_{\eta:|\eta|=2j}\mu(\eta).
 \end{equation}
 Then, by Russo's formula \eqref{eq: ldc2} with Inequalities \eqref{eq: ldc3} and \eqref{eq: ldc4}, \eqref{eq: ldc1} will be deduced from the following statement: for all fixed loops $\ell$ such that $|\ell|=2i$ and $\ell\cap B(n)\neq\emptyset$,
 \begin{multline}\label{eq: ldc5}
  \mathbb{P}\left[\mathcal{C}_{\beta}(0)\overset{\ell}{\longleftrightarrow}\mathcal{C}_{\beta}(\partial B(n)),\mathcal{C}_{\beta}(0)\cap\partial B(n)=\emptyset\right]\\
  \leq \sum\limits_{\eta:|\eta|=2j}\mathbb{P}\left[\mathcal{C}_{\beta}(0)\overset{\eta}{\longleftrightarrow}\mathcal{C}_{\beta}(\partial B(n)),\mathcal{C}_{\beta}(0)\cap\partial B(n)=\emptyset\right].
 \end{multline}
 Indeed, the constant $C(i,j)$ in \eqref{eq: ldc1} is chosen to be $(C'(i,j))^2$. Note that for $i<j$ and $\ell$ such that $|\ell|=2i$, there exists at least one loop $\eta$ such that $|\eta|=2j$ and that $\eta$ covers the same set of vertices as $\ell$. Hence, \eqref{eq: ldc5} follows.
\end{proof}
\section{Proof of Theorem~\ref{thm: truncated loop percolation} ii)}
\emph{We assume $\kappa=0$ since the argument for $\kappa>0$ is similar and even simpler.}

We will use the same dynamic renormalization schema as in \cite[Theorem~7.2]{GrimmettMR1707339} for Bernoulli bond percolation in slabs. The key of the proof of \cite[Theorem~7.2]{GrimmettMR1707339} is a sprinkling lemma (\cite[Lemma~7.17]{GrimmettMR1707339}), which states that the disconnection can be turned into connection by a local small increase in the percolation parameter $p$. We get an analogue for loop percolation. It is crucial that the modification of the intensity function is local in the sprinkling lemma (Lemma~\ref{lem: sprinkling}).
\begin{defn}[Seed event]
 Let $\beta:\{\text{\text{loops}}\}\rightarrow\mathbb{R}_{+}$ be the intensity function of the loop soup. Let $m\geq 1$, we call a box $B(x,m)$ a $\beta$-\emph{seed} if every edge in $B(x,m)$ is covered by some loop in $(\mathcal{L}_{\beta})^{B(x,m)}$. We set
 \begin{equation*}
  K(m,n,\beta)=\{\text{union of the }\beta\text{-seeds lying within }T(m,n)\}.
 \end{equation*}
 When $\beta\equiv\alpha$ is a constant function, we write $\alpha$-seed and $K(m,n,\alpha)$.
\end{defn}
\begin{lemma}\label{lem: sprinkling}
 Suppose $d\geq 3$ and $\alpha>\alpha_c$. For $\epsilon,\delta>0$ and an intensity function $\gamma:\{\text{loops}\}\rightarrow[\alpha+\delta,A]$, there exist integers $m=m(d,\alpha,A,\epsilon,\delta)$ and $n=n(d,\alpha,A,\epsilon,\delta)$ such that $2m<n$ and the following property holds. Let $R$ be such that $B(m)\subset R\subset B(n)$. Define $\gamma':\{\text{loops}\}\rightarrow\mathbb{R}_{+}$ as follows: for a loop $\ell$,
 \[\gamma'(\ell)=\left\{\begin{array}{ll}
 \gamma(\ell)+\delta & \text{ if }\ell\subset B(2n-1),\ell\cap \partial B(n-1)\neq\emptyset,\ell\cap R\neq \emptyset,\\
 \gamma(\ell) & \text{ otherwise.}
  \end{array}\right.\]
 Define events
 \begin{equation*}
  G=\left\{R\overset{(\mathcal{L}_{\gamma'})_{B(n-1)}^{B(2n-1)}}{\longleftrightarrow}K(m,n,\gamma')\right\}\text{ and }H=\left\{R\overset{(\mathcal{L}_{\gamma})_{B(n-1)}^{B(2n-1)}}{\centernot\longleftrightarrow}K(m,n,\gamma)\right\}.
 \end{equation*}
 Then, $\mathbb{P}[G|H]>1-\epsilon$.
\end{lemma}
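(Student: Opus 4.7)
The plan is to follow the strategy of \cite[Lemma 7.17]{GrimmettMR1707339} for Bernoulli bond percolation, replacing ``add one extra open edge with probability $\delta$'' by ``inject an independent Poisson family of extra loops from $R$ through $\partial B(n-1)$''. By superposition of Poisson point processes, $\mathcal{L}_{\gamma'}$ can be realized as $\mathcal{L}_\gamma \sqcup \mathcal{L}_{\mathrm{add}}$, where $\mathcal{L}_{\mathrm{add}}$ is an independent Poisson point process of intensity $\delta\,\mu_0$ restricted to the sprinkled family
\[\mathcal{S} \;:=\; \{\ell : \ell\subset B(2n-1),\ \ell\cap\partial B(n-1)\neq\emptyset,\ \ell\cap R\neq\emptyset\}.\]
Every loop of $\mathcal{L}_{\mathrm{add}}$ lies in the family $(\mathcal{L})_{B(n-1)}^{B(2n-1)}$ appearing in $G$ and already touches $R$, so a single $\ell\in\mathcal{L}_{\mathrm{add}}$ which additionally meets $K(m,n,\gamma)$ realises $G$.

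First I would observe that $K(m,n,\gamma')=K(m,n,\gamma)=:K$: any $\ell\in\mathcal{S}$ meets $\partial B(n-1)$ and so cannot be contained in any seed-candidate $B(y,m)\subset T(m,n)$ (disjoint from $\partial B(n-1)$ as soon as $2m<n$), hence $\gamma$ and $\gamma'$ coincide on every $(\mathcal{L})^{B(y,m)}$. Moreover, the three loop classes $\bigsqcup_y(\mathcal{L}_\gamma)^{B(y,m)}$ (determining $K$), $(\mathcal{L}_\gamma)_{B(n-1)}^{B(2n-1)}$ (determining $H$), and $\mathcal{L}_{\mathrm{add}}$ (the sprinkled loops) have pairwise disjoint supports in loop space, and therefore form mutually independent Poisson processes.

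The proof then becomes a two-step sprinkling: $K$ is thick with high probability, and a thick $K$ is hit by $\mathcal{L}_{\mathrm{add}}$. For the first step, since $\gamma\geq\alpha+\delta$, the probability that $B(y,m)$ is a seed is bounded below by some $p_m>0$ depending only on $m$; placing disjoint centers $y_i$ on a $(2m+1)$-sublattice of $T(m,n)$ gives $\asymp (n/m)^{d-1}$ independent Bernoulli$(\geq p_m)$ trials, so a Chernoff bound yields $|K|\geq c_1(m)\,n^{d-1}$ with probability $\geq 1-\epsilon/2$ once $n$ is large enough. For the second step, conditioning on a deterministic thick $K$ and on $H$, the independence of $\mathcal{L}_{\mathrm{add}}$ from $(H,K)$ gives
\[\mathbb{P}\bigl[\mathcal{L}_{\mathrm{add}}\cap\{\ell:\ell\cap K\neq\emptyset\}=\emptyset \mid H,K\bigr] = \exp\bigl(-\delta\,\mu(\mathcal{S}\cap\{\ell:\ell\cap K\neq\emptyset\})\bigr).\]
Combined with a loop-measure lower bound $\mu(\mathcal{S}\cap\{\ell:\ell\cap K\neq\emptyset\})\geq c\,\Capa(B(m))\asymp m^{d-2}$ (using $R\supset B(m)$ and that an SRW started in $B(m)$ and confined to $B(2n-1)$ hits the dense face-subset $K$ with probability uniformly bounded below once $n\gg m$), picking $m$ so that $\delta c\,m^{d-2}\geq\log(2/\epsilon)$ forces this failure probability below $\epsilon/2$; combining the two steps yields $\mathbb{P}[G\mid H]\geq 1-\epsilon$.

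The main obstacle is this loop-measure lower bound. A naive summation of the two-point mass $\mu(\{\ell:x,y\in\ell\})\sim|x-y|^{-2(d-2)}$ over $(x,y)\in R\times K$ yields a quantity of order $m^d\,n^{-d+3}$, which vanishes as $n\to\infty$ for $d\geq 5$, so a pair-by-pair argument fails; one must exploit $R$ and $K$ as capacity-carrying sets. The correct inequality is the SRW statement that a walk started from the equilibrium measure on $R$ (of total mass $\asymp m^{d-2}$) hits the thick face $K$ inside $B(2n-1)$ with probability uniformly bounded away from $0$, which should follow from harmonic-measure lower bounds on dense subsets of a face of a large box in $\mathbb{Z}^d$, together with the Green-function identities already used in \cite[Lemma 2.7]{loop-perc-Zd}.
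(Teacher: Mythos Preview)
Your claimed loop-measure lower bound is false, and this kills the approach. For $R=B(m)$ (which is allowed by the hypothesis $B(m)\subset R\subset B(n)$), every loop in your sprinkled family $\mathcal{S}$ meets both $B(m)$ and $\partial B(n-1)$, so by Lemma~\ref{lem: mu annulus},
\[
\mu(\mathcal{S})\;\leq\;\mu\bigl(\ell:\ell\cap B(m)\neq\emptyset,\ \ell\cap\partial B(n-1)\neq\emptyset\bigr)\;\leq\;C\,(m/n)^{d-2}\;\longrightarrow\;0
\]
as $n\to\infty$ with $m$ fixed. Hence $\mu(\mathcal{S}\cap\{\ell:\ell\cap K\neq\emptyset\})$ cannot be bounded below by any positive constant uniformly in $n$, let alone by $c\,m^{d-2}$. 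Your capacity heuristic forgets that a loop is a \emph{round trip}: the walk must return from $K$ (at distance $\sim n$) back to $R$, and this return contributes an additional factor $\asymp (m/n)^{d-2}$, which is exactly what makes the total mass vanish. A further warning sign is that your argument never uses $\alpha>\alpha_c$ in any essential way (seed probabilities are positive for every $\alpha>0$).

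The paper avoids any direct lower bound on the loop measure. It introduces the random cluster $\mathcal{S}_\gamma(R,m,n)$ of $K$ formed by loops in $(\mathcal{L}_\gamma)_{B(n-1)}^{B(2n-1)\setminus R}$ (i.e.\ those avoiding $R$); then both $H$ and $G^c$ become Poisson-void events for loops that touch $R$ and $\mathcal{S}_\gamma(R,m,n)$, with respective intensities $\gamma$ and $\gamma'=\gamma+\delta$ on that same family. Since $\gamma\leq A$, the $\delta$-weighted mass of this family is at least $\delta/A$ times the $\gamma$-weighted mass, and a H\"older/Jensen comparison converts this into $1-\mathbb{P}[G\mid H]\leq(\mathbb{P}[H])^{\min(\delta/A,1)}$. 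The task is thus reduced to making $\mathbb{P}[H]$ itself small, which is the content of Lemma~\ref{lem: a connection to a seed} (built on Lemmas~\ref{lem: truncated crossing}--\ref{lem: many truncated connections to the outer box}); it is precisely there that the hypothesis $\alpha>\alpha_c$ enters, ensuring that $B(m)\subset R$ is connected to $K(m,n,\gamma)$ with probability close to $1$ under $\gamma\geq\alpha+\delta$.
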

\begin{proof}[Proof of Theorem~\ref{thm: truncated loop percolation} ii)]
 
\end{proof}

The argument from Lemma~\ref{lem: sprinkling} to Theorem~\ref{thm: truncated loop percolation} ii) is the same as the renormalization argument in Bernoulli bond percolation case. It is based on a comparison with dependent site percolation. We omit this part and refer to Pages 154-162 in \cite{GrimmettMR1707339} for the details. We prepare several lemmas in steps and then prove Lemma~\ref{lem: sprinkling} in the end of this section.

We first state a result of the loop measure $\mu$ on the one loop connection.
\begin{lemma}(\cite[Lemma~2.7 a)]{loop-perc-Zd}+\cite[Proposition 6.5.1]{LawlerLimicMR2677157})\label{lem: mu annulus}
 For $d\geq 3$ and $\lambda>1$, there exists a positive constant $C=C(d,\lambda)<\infty$ such that for all $N\geq 1$ and $M\geq \lambda N$,
 \begin{equation*}
  \mu(\ell\cap B(N)\neq\emptyset, \ell\cap\partial B(M)\neq\emptyset)\leq C\cdot (N/M)^{d-2}.
 \end{equation*}
\end{lemma}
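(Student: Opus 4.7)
The strategy is to dominate the event $\{\ell\cap B(N)\neq\emptyset,\ell\cap\partial B(M)\neq\emptyset\}$ by a nonnegative cyclically invariant functional whose $\mu$-expectation can be computed explicitly, and then apply classical potential theory on $\mathbb{Z}^d$. Write $A=B(N)$, $B=\partial B(M)$. For a loop $\ell$ meeting $A\cup B$, list the successive visits of $\ell$ to $A\cup B$ in cyclic order; this sequence is a finite cyclic word over $\{A,B\}$. Define $T_{AB}(\ell)$ to be the number of positions in this cyclic word where an $A$-letter is immediately followed by a $B$-letter. Any cyclic word containing both letters has at least one such position, so $\mathbf{1}_{\ell\cap A\neq\emptyset,\,\ell\cap B\neq\emptyset}\le T_{AB}(\ell)$, and it suffices to bound $\sum_\ell T_{AB}(\ell)\mu(\ell)$.

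To evaluate this sum, I start from the based-loop formula $\int f\,d\mu=\sum_n n^{-1}\sum_{\dot\ell,\,|\dot\ell|=n}Q^{\dot\ell}f(\pi(\dot\ell))$. Writing $T_{AB}(\dot\ell)=\sum_{k=1}^n\mathbf{1}\{x_k\in A,\text{ cyclically next }A\cup B\text{ visit after }k\text{ lies in }B\}$ and exploiting the cyclic symmetry of the sum over $\dot\ell$, the $k$-sum collapses into $n$ equal copies with the base point pinned at $x_1\in A$, cancelling the $1/n$. Applying strong Markov at the stopping time $T=\min\{k\ge 1:X_k\in A\cup B\}$, the expected outcome is the clean identity
\begin{equation*}
\sum_\ell T_{AB}(\ell)\mu(\ell)\;=\;\sum_{x\in A}\sum_{y\in B}\mathbb{P}_x[T<\infty,\,X_T=y]\,G(y,x).
\end{equation*}
The symmetry $T_{AB}=T_{BA}$ on cyclic sequences, together with the reversibility of SRW, shows that the same total equals the analogous expression with $A$ and $B$ swapped; I will work with the version that starts from $y\in B$, since the required hitting estimate from $B$ is the standard capacity bound.

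For the final estimate, use $M\ge\lambda N$: any pair $x\in A,y\in B$ satisfies $|x-y|_\infty\ge M-N\ge(1-1/\lambda)M$, so Green-function asymptotics for SRW on $\mathbb{Z}^d$ yield $G(x,y)\le C(d,\lambda)M^{2-d}$. For the hitting side, starting from $y\in\partial B(M)$, only first steps $X_1$ with $\|X_1\|_\infty=M-1$ can eventually produce $X_T\in A$: the cases $\|X_1\|_\infty=M$ give $T=1$ with $X_T\in B$, while $\|X_1\|_\infty=M+1$ places the walk outside $B(M)$, from which any path to $A$ must first cross $B$. For $z\in\partial B(M-1)$, the function $z\mapsto\mathbb{P}_z[\tau_A<\tau_B]$ is discrete harmonic on the annulus, vanishing on $B$ and equal to $1$ on $A$; comparison with the radial continuous solution $(M^{2-d}-r^{2-d})/(M^{2-d}-N^{2-d})$ evaluated at $r=M-1$, which is exactly the type of capacity estimate proved in \cite[Proposition~6.5.1]{LawlerLimicMR2677157}, gives $\mathbb{P}_z[\tau_A<\tau_B]\le C(d,\lambda)N^{d-2}M^{1-d}$. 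Consequently $\mathbb{P}_y[T<\infty,\,X_T\in A]\le C(d,\lambda)N^{d-2}M^{1-d}$, and summing over $y\in B$ with $|B|\asymp M^{d-1}$ yields
\begin{equation*}
\sum_\ell T_{AB}(\ell)\mu(\ell)\;\le\;C(d,\lambda)\,M^{d-1}\cdot\frac{N^{d-2}}{M^{d-1}}\cdot\frac{1}{M^{d-2}}\;=\;C(d,\lambda)\left(\frac{N}{M}\right)^{d-2},
\end{equation*}
which is the claimed inequality.

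The main obstacle I foresee is the careful bookkeeping in Step 2: one must track the stabilizer size in the passage from $\dot\mu$ to $\mu$, verify that rotating the base point to an initiating $A$-visit exactly absorbs the $1/n$ factor, and confirm that $T_{AB}$ is genuinely cyclically invariant (so that it descends from based to unrooted loops). The non-radial correction to the discrete harmonic estimate in Step 3 is routine via a standard discrete Harnack argument for SRW on $\mathbb{Z}^d$ and is absorbed into the constant $C(d,\lambda)$.
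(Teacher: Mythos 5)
Your overall architecture is sound, and it is worth noting that the paper itself does not prove this lemma at all: it is imported as a combination of \cite[Lemma~2.7 a)]{loop-perc-Zd} (which bounds $\mu(\ell\cap A\neq\emptyset,\ell\cap B\neq\emptyset)$ by exactly the kind of hitting-probability/Green-function expression you derive by counting $A$-to-$B$ transitions and re-rooting) and the ball/annulus estimates of \cite[Proposition~6.5.1]{LawlerLimicMR2677157}. Your Step 1--2 is a correct reconstruction of the first ingredient: $T_{AB}$ is cyclically invariant, dominates the indicator, and the re-rooting computation with the strong Markov property does give $\sum_{x\in A}\sum_{y\in B}\mathbb{P}_x[T<\infty,X_T=y]G(y,x)$ (reversibility is not even needed for the swapped form, since the same computation applied to $T_{BA}$ gives it directly).

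The one place where your justification is thinner than the claim is the hitting estimate $\mathbb{P}_z[\tau_{B(N)}<\tau_{\partial B(M)}]\leq C(d,\lambda)N^{d-2}M^{1-d}$ for $z\in\partial B(M-1)$. The estimate is true, but neither an interior Harnack argument nor a naive comparison with the radial solution delivers the boundary-layer factor $M^{1-d}$ (as opposed to $M^{2-d}$): sandwiching the cube between Euclidean balls loses the information that $z$ is at distance $1$ from the boundary, and Harnack controls values at comparable distance from the boundary, not boundary decay. A clean fix in the cube geometry is to project on the coordinate achieving $\|z\|_\infty$: since the sup-norm changes by at most one per step, the event forces that coordinate (a lazy one-dimensional walk started at $M-1$) to reach an intermediate level $\rho$ with $N<\rho<M$ before reaching $M$, which by gambler's ruin costs $C(\lambda)/M$, and from $\partial B(\rho)$ the hitting probability of $B(N)$ is $\leq C(d,\lambda)(N/M)^{d-2}$ by the capacity bound. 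Alternatively, you can bypass the refined estimate altogether by staying with the unswapped identity and using $\sum_{x\in A}\mathbb{P}_x[T<\infty,X_T\in B]\leq\sum_{x\in B(N)}\mathbb{P}_x[\tau^+_{B(N)}>\tau_{\partial B(M)}]=\mathrm{cap}_{B(M)}(B(N))\leq C(d)N^{d-2}$ together with $\sup_{x\in A,y\in B}G(x,y)\leq C(d,\lambda)M^{2-d}$. Finally, your distance bound $|x-y|_\infty\geq(1-1/\lambda)M$ and the implicit disjointness of $B(N)$ and $\partial B(M)$ fail only when $M\leq C(\lambda)$, in which case the left-hand side is bounded by $\#B(N)\cdot\sup_x\mu(\ell\ni x)\leq C(d,\lambda)$ and the inequality is trivial; this edge case should be mentioned but is harmless.
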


The following lemma states that if there is a connection from the inner box $B(m)$ to the outer box $\partial B(\sqrt{n})$, then the contribution of the loops crossing $\partial B(n)$ is negligible.
\begin{lemma}\label{lem: truncated crossing}
 For fixed $m\geq 1$, 
 \begin{equation*}
  \lim\limits_{n\rightarrow\infty}\mathbb{P}[B(m)\overset{\mathcal{L}_{\alpha}}{\longleftrightarrow}\partial B(\sqrt{n}),B(m)\overset{(\mathcal{L}_{\alpha})^{B(n-1)}}{\centernot\longleftrightarrow}\partial B(\sqrt{n})]=0.
 \end{equation*}
\end{lemma}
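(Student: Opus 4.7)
The plan is to show that, on the event in question, there must exist a single loop in $\mathcal{L}_\alpha$ which simultaneously meets $B(\sqrt{n})$ and $\partial B(n-1)$, and then to invoke Lemma~\ref{lem: mu annulus} (with $\lambda=2$) to conclude that such a loop is present with probability tending to $0$. The probability bound shall come from the Poisson expectation, which decays polynomially in $n$.

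I would restrict to $n$ large enough that $m<\sqrt{n}$ and $n-1\geq 2\sqrt{n}$. Consider the restricted cluster $\mathcal{C}^{\star}=\{y:B(m)\overset{(\mathcal{L}_{\alpha})^{B(n-1)}}{\longleftrightarrow}y\}$; the second condition defining the event forces $\mathcal{C}^{\star}\cap\partial B(\sqrt{n})=\emptyset$. Since every loop of the SRW loop soup is a connected subset of $\mathbb{Z}^d$, every connected component of $\mathcal{C}^{\star}$ meeting $B(m)$ is connected and so must be contained in $B(\sqrt{n}-1)$; hence $\mathcal{C}^{\star}\subset B(\sqrt{n}-1)$. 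Now extract any chain $\ell_1,\dots,\ell_k\in\mathcal{L}_\alpha$ realizing the full connection $B(m)\longleftrightarrow\partial B(\sqrt{n})$, with $\ell_1\cap B(m)\neq\emptyset$, $\ell_k\cap\partial B(\sqrt{n})\neq\emptyset$, and consecutive loops sharing a vertex. The failure of the restricted connection forces some $\ell_i$ not to be contained in $B(n-1)$; take $j$ to be the smallest such index. Then $\ell_1,\ldots,\ell_{j-1}\subset B(n-1)$, and either $j=1$ so that $\ell_j$ already meets $B(m)\subset B(\sqrt{n})$, or $j>1$ and $\ell_j$ shares a vertex with $\ell_{j-1}\subset\mathcal{C}^{\star}\subset B(\sqrt{n})$. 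In either case $\ell_j$ meets $B(\sqrt{n})$, and since it is connected with vertices both inside and outside $B(n-1)$, it must also cross $\partial B(n-1)$.

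Therefore the event is contained in $E_n=\{\exists\, \ell\in\mathcal{L}_\alpha:\ell\cap B(\sqrt{n})\neq\emptyset,\ \ell\cap\partial B(n-1)\neq\emptyset\}$. Bounding the probability of $E_n$ by the Poisson expectation and invoking Lemma~\ref{lem: mu annulus} then yields
\[\mathbb{P}[E_n]\leq\alpha\,\mu\bigl(\ell\cap B(\sqrt{n})\neq\emptyset,\ \ell\cap\partial B(n-1)\neq\emptyset\bigr)\leq C\alpha\left(\frac{\sqrt{n}}{n-1}\right)^{d-2},\]
which tends to $0$ as $n\to\infty$ since $d\geq 3$. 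The main bookkeeping step is the geometric claim placing $\mathcal{C}^{\star}$ inside $B(\sqrt{n}-1)$, which relies crucially on each loop being a connected subset of $\mathbb{Z}^d$; the decisive quantitative input is the polynomial decay in Lemma~\ref{lem: mu annulus}, which is precisely why the scale separation $\sqrt{n}\ll n$ was chosen.
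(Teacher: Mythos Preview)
Your proof is correct and follows essentially the same route as the paper: both arguments show that the event forces the existence of a single loop in $\mathcal{L}_\alpha$ touching $B(\sqrt{n})$ and reaching past $B(n-1)$, and then appeal to Lemma~\ref{lem: mu annulus}. The paper states this implication in one line (claiming a loop with $\partial B(\sqrt{n})\overset{\ell}{\longleftrightarrow}\partial B(n)$ and bounding by $1-\exp\{-\alpha\mu(\cdots)\}$), whereas you supply the details via the chain-of-loops argument and the containment $\mathcal{C}^\star\subset B(\sqrt n-1)$; the resulting bounds differ only cosmetically.
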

\begin{proof}
 Since the event $\left\{B(m)\overset{\mathcal{L}_{\alpha}}{\longleftrightarrow}\partial B(\sqrt{n}),B(m)\overset{(\mathcal{L}_{\alpha})^{B(n-1)}}{\centernot\longleftrightarrow}\partial B(\sqrt{n})\right\}$ implies that there exists  $\ell\in\mathcal{L}_{\alpha}$ such that $\partial B(\sqrt{n})\overset{\ell}{\longleftrightarrow}\partial B(n)$, we have that
 \begin{multline*}
  \mathbb{P}\left[B(m)\overset{\mathcal{L}_{\alpha}}{\longleftrightarrow}\partial B(\sqrt{n}),B(m)\overset{(\mathcal{L}_{\alpha})^{B(n-1)}}{\centernot\longleftrightarrow}\partial B(\sqrt{n})\right]\\
  \leq 1-\exp\{-\alpha\mu(\ell\cap\partial B(n)\neq\emptyset,\ell\cap \partial B(\sqrt{n})\neq\emptyset).
 \end{multline*}
 Then, the result follows from Lemma~\ref{lem: mu annulus}.
\end{proof}
The next lemma states that if $B(m)\overset{\mathcal{L}_{\alpha}}{\longleftrightarrow}\partial B(\sqrt{n})$ and if we increase the intensity a bit, then necessarily, we have lots of loops contained in $B(2n-1)$, which intersect $\partial B(n)$ and are connected to $B(m)$ through the loops inside $B(2n-1)$.
\begin{lemma}\label{lem: many trancated loops}
 For $\alpha,\delta>0$ and $n\geq 1$, define an intensity function $\beta_n:\{loops\}\rightarrow [0,\infty[$ as follows:
 \begin{equation*}
  \beta_n(\ell)=\left\{\begin{array}{ll}
                      \alpha+\delta & \text{if }\ell\cap\partial B(n)\neq\emptyset,\ell\subset B(2n-1),\\
                      \alpha & \text{otherwise.}
                     \end{array}
  \right.
 \end{equation*}
 Denote by $\mathcal{C}(m,n,\beta_n)$ the cluster of vertices which can be connected to $B(m)$ by loops $(\mathcal{L}_{\beta_n})^{B(n-1)}=(\mathcal{L}_\alpha)^{B(n-1)}$ contained in $B(n-1)$:
 \[\mathcal{C}(m,n,\beta_n)=B(m)\cup\{x\in B(n-1):x\overset{(\mathcal{L}_{\beta_n})^{B(n-1)}}{\longleftrightarrow}B(m)\}.\]
 Denote by $\mathcal{O}(m,n,\beta_n)$ the sub-multiset of loops $(\mathcal{L}_{\beta_n})^{B(2n-1)}$ which is contained in $B(2n-1)$ and intersects both $\partial B(n)$ and $\mathcal{C}(m,n,\beta_n)$. Then, for fixed $m,k\geq 1$ and $\alpha,\delta>0$,
 \begin{equation*}
  \lim\limits_{n\rightarrow\infty}\mathbb{P}[B(m)\overset{\mathcal{L}_{\alpha}}\longleftrightarrow \partial B(\sqrt{n}),\#\mathcal{O}(m,n,\beta_n)\leq k]=0.
 \end{equation*}
\end{lemma}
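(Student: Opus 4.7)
\emph{Plan.} If $\alpha \leq \alpha_c$ the claim is immediate, since $\mathbb{P}[B(m) \overset{\mathcal{L}_\alpha}{\longleftrightarrow} \partial B(\sqrt n)] \to 0$; I will therefore assume $\alpha > \alpha_c$. The strategy is to condition on the loops of $\mathcal{L}_{\beta_n}$ that are contained in $B(n-1)$ (which determine $\mathcal{C} = \mathcal{C}(m,n,\beta_n)$), exploit the resulting Poisson structure of $\#\mathcal{O}$, and show that its conditional mean diverges on the event.

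\emph{Reduction and Poisson structure.} First I apply Lemma~\ref{lem: truncated crossing} to replace the event in the statement by its truncated analogue $\tilde E := \{B(m) \overset{(\mathcal{L}_\alpha)^{B(n-1)}}{\longleftrightarrow} \partial B(\sqrt n)\}$, at the cost of an $o(1)$ error. The loop-sets $\{\ell : \ell \subset B(n-1)\}$ and $\{\ell : \ell \subset B(2n-1),\, \ell \cap \partial B(n) \neq \emptyset\}$ are disjoint, so under $\mathcal{L}_{\beta_n}$ the two restrictions are independent Poisson loop soups; moreover $(\mathcal{L}_{\beta_n})^{B(n-1)} = (\mathcal{L}_\alpha)^{B(n-1)}$ in law and determines $\mathcal{C}$. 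Hence, conditionally on $\mathcal{C}$, $\#\mathcal{O}$ is Poisson with mean
\[
 \lambda_\mathcal{C} \;=\; (\alpha+\delta)\, \mu(\ell \subset B(2n-1),\, \ell \cap \partial B(n) \neq \emptyset,\, \ell \cap \mathcal{C} \neq \emptyset),
\]
and the lemma reduces to showing $\lambda_\mathcal{C} \to \infty$ in probability on $\tilde E$.

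\emph{Lower bound on $\lambda_\mathcal{C}$.} For each $y \in \mathcal{C} \cap \partial B(n-1)$ and each neighbor $y' \sim y$ with $y' \in \partial B(n)$, the length-two loop $y \to y' \to y$ lies in $B(2n-1)$, touches $\partial B(n)$, intersects $\mathcal{C}$, and carries $\mu$-mass bounded below by $c_d > 0$. Summing gives $\lambda_\mathcal{C} \geq c_d(\alpha+\delta)\,\#(\mathcal{C} \cap \partial B(n-1))$, so it remains to show that $\#(\mathcal{C} \cap \partial B(n-1)) \to \infty$ in probability on $\tilde E$. Since $\alpha > \alpha_c$, by uniqueness of the infinite cluster and the convergence $\mathbb{P}[B(m) \overset{\mathcal{L}_\alpha}{\longleftrightarrow} \partial B(R)] \downarrow \mathbb{P}[B(m) \overset{\mathcal{L}_\alpha}{\longleftrightarrow} \infty]$, conditionally on $\tilde E$ the event $\{B(m) \in \mathcal{S}_\infty\}$ holds with probability tending to one; by ergodicity the infinite cluster has density $\theta(\alpha)$ along $\partial B(n-1)$, so $\#(\mathcal{S}_\infty \cap \partial B(n-1)) \gtrsim \theta n^{d-1}$. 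A first-moment bound using Lemma~\ref{lem: mu annulus} then shows that only $o(n^{d-1})$ of these vertices can fail to be connected to $B(m)$ inside $(\mathcal{L}_\alpha)^{B(n-1)}$.

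\emph{Main obstacle.} The delicate point is the comparison between the truncated cluster $\mathcal{C} = \mathcal{C}^{(n)}(B(m))$ and $\mathcal{S}_\infty \cap B(n-1)$ near the outer shell $\partial B(n-1)$: there the bound of Lemma~\ref{lem: mu annulus} is only of order one, so controlling the "lost" vertices requires a layered application of Lemma~\ref{lem: truncated crossing} at intermediate radii together with the uniqueness/ergodicity input, all while avoiding any circular use of Theorem~\ref{thm: truncated loop percolation} ii) which is itself under proof.
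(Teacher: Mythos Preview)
Your reduction via Lemma~\ref{lem: truncated crossing} and the identification of the conditional Poisson structure are correct and match the paper. The gap is in the step you yourself flag as the ``Main obstacle'': you need $\#(\mathcal{C}\cap\partial B(n-1))\to\infty$ in probability on $\tilde E$, and the sketch you give (pass from $\tilde E$ to $\{B(m)\subset\mathcal S_\infty\}$, invoke ergodic density of $\mathcal S_\infty$ on $\partial B(n-1)$, then a first-moment bound via Lemma~\ref{lem: mu annulus}) does not close. The crux is exactly what you say: for $y\in\partial B(n-1)$ the relevant loop measure $\mu(\ell\ni y,\,\ell\cap\partial B(n)\neq\emptyset)$ is of order one, so you cannot show by a first-moment argument that most of $\mathcal S_\infty\cap\partial B(n-1)$ survives truncation to $B(n-1)$. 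The ``layered application'' you allude to is not supplied, and I do not see a clean way to make it work without essentially reproving the local-uniqueness machinery that comes \emph{after} this lemma in the logical order. Also, your dismissal of $\alpha\le\alpha_c$ is not quite safe at $\alpha=\alpha_c$, where $\mathbb P[B(m)\leftrightarrow\partial B(\sqrt n)]$ need not tend to $0$.

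The paper sidesteps all of this by a short trick that does not attempt to prove $\lambda_{\mathcal C}\to\infty$. Conditionally on $(\mathcal L_\alpha)^{B(n-1)}$, write the extra loops at intensity $\alpha+\delta$ as a thinning: each loop is ``old'' with probability $\alpha/(\alpha+\delta)$ and ``new'' otherwise. On $\{\#\mathcal O(m,n,\beta_n)\le k\}$, the probability that all these loops are new is at least $(\delta/(\alpha+\delta))^k$, whence
\[
\mathbb P[\#\mathcal O(m,n,\beta_n)\le k\mid(\mathcal L_\alpha)^{B(n-1)}]\ \le\ \Big(\tfrac{\alpha+\delta}{\delta}\Big)^k\,\mathbb P[\#\mathcal O(m,n,\alpha)=0\mid(\mathcal L_\alpha)^{B(n-1)}].
\]
One also inserts, at the cost of a bounded factor $e^{C(d)\alpha}$ coming from Lemma~\ref{lem: mu annulus}, the event $\{\widetilde{\mathcal O}=0\}$ that no loop of $\mathcal L_\alpha$ hits both $\mathcal C$ and $\partial B(2n)$. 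But $\{\#\mathcal O(m,n,\alpha)=0,\ \widetilde{\mathcal O}=0\}$ exactly says that no loop of $\mathcal L_\alpha$ connects $\mathcal C$ to $B(n-1)^c$, i.e.\ $B(m)\overset{\mathcal L_\alpha}{\centernot\longleftrightarrow}\partial B(n)$. Thus the whole probability is bounded by a fixed constant times $\mathbb P[B(m)\leftrightarrow\partial B(\sqrt n)]-\mathbb P[B(m)\leftrightarrow\partial B(n)]\to 0$, valid for \emph{every} $\alpha>0$ with no case split and no appeal to uniqueness or ergodicity of the infinite cluster.
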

\begin{proof}
 By Lemma~\ref{lem: truncated crossing}, it is enough to show that
 \[\lim\limits_{n\rightarrow\infty}\mathbb{P}[B(m)\overset{(\mathcal{L}_{\alpha})^{B(n-1)}}\longleftrightarrow \partial B(\sqrt{n}),\#\mathcal{O}(m,n,\beta_n)\leq k]=0.\]
 Consider $\widetilde{\mathcal{O}}(m,n,\beta_n)$ the subset of loops $\mathcal{L}_{\beta_n}$ which intersect both $\partial B(2n)$ and $\mathcal{C}(m,n,\beta_n)$.
 Conditionally on the loops $(\mathcal{L}_{\alpha})^{B(n-1)}=(\mathcal{L}_{\beta_n})^{B(n-1)}$ inside $B(n-1)$, $\#\mathcal{O}(m,n,\beta_n)$ follows a Poisson distribution of the expectation \[(\alpha+\delta)\mu(\ell\cap \partial B(n)\neq\emptyset,\ell\cap\mathcal{C}(m,n,\beta_n)\neq\emptyset,\ell\subset B(2n-1)),\]
 and $\#\widetilde{\mathcal{O}}(m,n,\beta_n)$ is another independent Poisson random variable of the expectation
 \[\alpha\mu(\ell\cap \partial B(2n)\neq\emptyset,\ell\cap\mathcal{C}(m,n,\beta_n)\neq\emptyset).\] By Lemma~\ref{lem: mu annulus}, there exists a universal $C=C(d)<\infty$ such that
 \[\alpha\mu(\ell\cap \partial B(2n)\neq\emptyset,\ell\cap\mathcal{C}(m,n,\beta_n)\neq\emptyset)\leq \alpha\mu(\ell\cap \partial B(2n)\neq\emptyset,\ell\cap B(n-1)\neq\emptyset)\leq C(d)\alpha.\]
 Accordingly,
 \begin{multline}\label{eq: mtl2}
  \mathbb{P}[B(m)\overset{(\mathcal{L}_{\alpha})^{B(n-1)}}\longleftrightarrow \partial B(\sqrt{n}),\#\mathcal{O}(m,n,\beta_n)\leq k]\\
  \leq e^{\alpha C(d)}\mathbb{P}[B(m)\overset{(\mathcal{L}_{\alpha})^{B(n-1)}}\longleftrightarrow \partial B(\sqrt{n}),\#\mathcal{O}(m,n,\beta_n)\leq k,\#\widetilde{\mathcal{O}}(m,n,\beta_n)=0].
 \end{multline}
 Conditionally on $(\mathcal{L}_{\alpha})^{B(n-1)}$, by using the natural monotone coupling between $\mathcal{L}_{\alpha}$ and $\mathcal{L}_{\beta_n}$, we get that
 \[\mathbb{P}[\#\mathcal{O}(m,n,\beta_n)\leq k|(\mathcal{L}_{\alpha})^{B(n-1)}]\leq \left(\frac{\alpha+\delta}{\delta}\right)^{k}\mathbb{P}[\#\mathcal{O}(m,n,\alpha)=0|(\mathcal{L}_{\alpha})^{B(n-1)}].\]
 Then, by noting that $\lim\limits_{n\rightarrow\infty}\mathbb{P}[B(m)\overset{\mathcal{L}_{\alpha}}{\longleftrightarrow} \partial B(\sqrt{n})]=\lim\limits_{n\rightarrow\infty}\mathbb{P}[B(m)\overset{\mathcal{L}_{\alpha}}{\longleftrightarrow} \partial B(n)]=\mathbb{P}[B(m)\overset{\mathcal{L}_{\alpha}}{\longleftrightarrow}\infty]$,
 \begin{align*}
  \eqref{eq: mtl2}\leq &e^{\alpha C(d)}\left(\frac{\alpha+\delta}{\delta}\right)^{k}\mathbb{P}[B(m)\overset{\mathcal{L}_{\alpha}}{\longleftrightarrow} \partial B(\sqrt{n}),B(m)\overset{\mathcal{L}_{\alpha}}{\centernot\longleftrightarrow}\partial B(n)]\\
  =&e^{\alpha C(d)}\left(\frac{\alpha+\delta}{\delta}\right)^{k}\left(\mathbb{P}[B(m)\overset{\mathcal{L}_{\alpha}}{\longleftrightarrow} \partial B(\sqrt{n})]-\mathbb{P}[B(m)\overset{\mathcal{L}_{\alpha}}{\centernot\longleftrightarrow}\partial B(n)]\right)\overset{n\rightarrow\infty}{\longrightarrow}0.\qedhere
 \end{align*}
\end{proof}

The following lemma is an analogue of Equation (7.16) in the proof of \cite[Lemma~7.9]{GrimmettMR1707339}.
\begin{lemma}\label{lem: many truncated connections to the outer box}
 Let $\alpha,\delta,\beta_n$ be the same as in Lemma~\ref{lem: many trancated loops} and let
 \[U(m,n,\beta_n)=\{x\in \partial B(n):x\overset{(\mathcal{L}_{\beta_n})^{B(2n-1)}_{B(n-1)}}{\longleftrightarrow}B(m)\}.\]
Then, for all fixed $k\geq 1$,
 \begin{equation}\label{eq: mtcttob0}
 \lim\limits_{n\rightarrow\infty}\mathbb{P}[\#U(m,n,\beta_n)\leq k,B(m)\overset{(\mathcal{L}_{\alpha})^{B(n-1)}}{\longleftrightarrow} \partial B(\sqrt{n})]=0.
 \end{equation}
\end{lemma}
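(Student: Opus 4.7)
The plan is to deduce the ``many vertices'' conclusion from Lemma~\ref{lem: many trancated loops} (the ``many loops'' statement) by controlling how many loops of $\mathcal{O}(m,n,\beta_n)$ can share a common vertex on $\partial B(n)$. I proceed in three steps.

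Step~1 (Vertex inclusion). Each loop $\ell \in \mathcal{O}(m,n,\beta_n)$ is contained in $B(2n-1)$ and intersects $\mathcal{C}(m,n,\beta_n) \subset B(n-1)$, so $\ell \in (\mathcal{L}_{\beta_n})^{B(2n-1)}_{B(n-1)}$. Moreover, $\mathcal{C}(m,n,\beta_n)$ is already connected to $B(m)$ through $(\mathcal{L}_\alpha)^{B(n-1)} \subset (\mathcal{L}_{\beta_n})^{B(2n-1)}_{B(n-1)}$, so every vertex of $\ell \cap \partial B(n)$ lies in $U(m,n,\beta_n)$. This yields the inclusion $U(m,n,\beta_n) \supseteq V(\mathcal{O}) := \bigcup_{\ell \in \mathcal{O}(m,n,\beta_n)}(\ell \cap \partial B(n))$ and reduces the goal to bounding $\mathbb{P}[|V(\mathcal{O})| \leq k, B(m) \overset{(\mathcal{L}_\alpha)^{B(n-1)}}{\longleftrightarrow} \partial B(\sqrt{n})]$.

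Step~2 (From many loops to many vertices). For each $x \in \partial B(n)$, let $N_x$ denote the number of loops in $\mathcal{L}_{\beta_n}$ contained in $B(2n-1)$ and passing through $x$. By translation invariance and the transience of SRW in $d \geq 3$, $N_x$ is a Poisson random variable of mean at most $C_0 := (\alpha+\delta)\mu(\ell \ni 0) < \infty$. The Poisson tail bound together with a union bound over the $O(n^{d-1})$ vertices of $\partial B(n)$ then gives $\mathbb{P}[\max_{x \in \partial B(n)} N_x \geq M_n] \to 0$ for $M_n$ growing faster than $d\log n/\log\log n$. On the complementary event, if $|V(\mathcal{O})| \leq k$ then every loop of $\mathcal{O}$ passes through one of at most $k$ vertices, so $\#\mathcal{O}(m,n,\beta_n) \leq |V(\mathcal{O})| \cdot \max_x N_x \leq kM_n$. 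Hence
\[
 \mathbb{P}[\#U \leq k, B(m) \to \partial B(\sqrt{n})] \leq \mathbb{P}[\#\mathcal{O}(m,n,\beta_n) \leq kM_n, B(m) \to \partial B(\sqrt{n})] + o(1).
\]

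Step~3 (Quantitative invocation of Lemma~\ref{lem: many trancated loops}). It remains to control the right-hand side with the growing threshold $kM_n = O(\log n)$. The proof of Lemma~\ref{lem: many trancated loops} actually yields the quantitative inequality $\mathbb{P}[\#\mathcal{O}(m,n,\beta_n) \leq K \mid (\mathcal{L}_\alpha)^{B(n-1)}] \leq C \bigl(\frac{\alpha+\delta}{\delta}\bigr)^K \mathbb{P}[\#\mathcal{O}(m,n,\alpha)=0 \mid (\mathcal{L}_\alpha)^{B(n-1)}]$, which integrated against $\mathbb{1}_{B(m) \to \partial B(\sqrt{n})}$ leads, via the argument of Lemma~\ref{lem: many trancated loops}, to $C\bigl(\frac{\alpha+\delta}{\delta}\bigr)^K \bigl(\mathbb{P}[B(m)\to\partial B(\sqrt{n})] - \mathbb{P}[B(m)\to\partial B(n)]\bigr)$. \textbf{The main obstacle} is to make this vanish for $K = kM_n = O(\log n)$: since $((\alpha+\delta)/\delta)^K$ is polynomial in $n$, one needs the connection-probability difference to decay faster than any polynomial, without circularly invoking the exponential decay of finite clusters (Corollary~\ref{cor: exponential tail}, which itself relies on the sprinkling lemma). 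I expect this to be resolvable either by splitting the annulus between $\sqrt{n}$ and $n$ into $O(\log n)$ intermediate scales and iterating Lemma~\ref{lem: mu annulus} at each one, or by refining the proof of Lemma~\ref{lem: many trancated loops} to retain the polynomial factor $(n/\sqrt{n})^{-(d-2)}$ from Lemma~\ref{lem: mu annulus} explicitly; this is the technical heart of the argument.
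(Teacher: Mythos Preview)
Your Step~3 is a genuine gap, and you yourself identify it as such. You need a version of Lemma~\ref{lem: many trancated loops} valid for a threshold $K=kM_n$ growing like $\log n$, and the bound you extract gives a prefactor $((\alpha+\delta)/\delta)^{K}$ that is polynomial in $n$. The residual term $\mathbb P[B(m)\to\partial B(\sqrt n)]-\mathbb P[B(m)\to\partial B(n)]$ has no a~priori quantitative decay: all you know at this stage is that both probabilities converge to $\mathbb P[B(m)\to\infty]$, with no rate. Your proposed fixes (splitting into $O(\log n)$ scales, or keeping the $(n/\sqrt n)^{-(d-2)}$ factor from Lemma~\ref{lem: mu annulus}) do not address this, because the term in question is a difference of connection probabilities, not a one-loop crossing estimate. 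The polynomial decay from Lemma~\ref{lem: mu annulus} bounds $\mathbb P[\exists\ell:\partial B(\sqrt n)\leftrightarrow\partial B(n)]$, which enters the proof of Lemma~\ref{lem: truncated crossing}, not the quantity you need here.

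The paper's proof avoids this entirely by working with excursions rather than vertices. Each loop in $\mathcal O(m,n,\beta_n)$ contributes at least one excursion outside $B(n-1)$, so many loops force many excursions (for any \emph{fixed} threshold $q$, Lemma~\ref{lem: many trancated loops} suffices as stated). Conditionally on the endpoints on $\partial B(n-1)$, the excursion paths are independent, and each excursion has probability bounded below by some $p=p(d,k)>0$, uniformly in $n$ and in the endpoints, of visiting more than $k$ distinct vertices of $\partial B(n)$. Hence on $\{\#\text{excursions}\geq q\}$ the event $\{\#U\leq k\}$ has conditional probability at most $(1-p)^q$. Choosing $q$ large (but fixed) makes this arbitrarily small, and Lemma~\ref{lem: many trancated loops} with that fixed $q$ handles the rest. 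The point is that a single excursion can already produce many boundary vertices with uniformly positive probability, so no growing threshold is ever needed.
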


\begin{proof}
 Denote by $\mathcal{E}(m,n,\beta_n)$ the point measure of excursions outside of $B(n-1)$ of the loops in $\mathcal{O}(m,n,\beta_n)$, where $\mathcal{O}(m,n,\beta_n)$ is defined in Lemma~\ref{lem: many trancated loops}. Let $\mathcal{E}(m,n,\beta_n)(1)$ be the total mass of $\mathcal{E}(m,n,\beta_n)$, i.e. the number of excursions with multiplicity. Then, as a consequence of Lemma~\ref{lem: many trancated loops}, for any fixed $q\geq 1$,
\begin{equation}\label{eq: mtcttob3}
 \lim\limits_{n\rightarrow\infty}\mathbb{P}[\mathcal{E}(m,n,\beta_n)(1)\leq q,B(m)\overset{(\mathcal{L}_{\beta_n})^{B(n-1)}}{\longleftrightarrow} \partial B(\sqrt{n})]=0.
\end{equation}
Conditionally on the position of the start and end points of an excursion, the excursion follows the normalized distribution of the excursion outside of $B(n-1)$ with that given pair of start and end points. Note that we have independence between excursions. Moreover, with probability uniformly bounded from below by $p(d,k)>0$, the excursion covers at least $k$ vertices on $\partial B(n)$. Hence, \eqref{eq: mtcttob0} follows from \eqref{eq: mtcttob3}.
\end{proof}

We will deduce the following key lemma from Lemma~\ref{lem: many truncated connections to the outer box}. The argument is an adaptation from the case of Bernoulli bond percolation, see e.g. \cite[Lemma~7.9]{GrimmettMR1707339}.
\begin{lemma}\label{lem: a connection to a seed}
 If $\theta(\alpha)>0$, for all $\delta,\eta>0$, there exist $m=m(d,\alpha,\eta)$ and $n=n(d,\alpha,\delta,\eta)>2m$ such that
 \begin{equation*}
  \mathbb{P}[B(m)\overset{(\mathcal{L}_{\beta_n})_{B(n-1)}^{B(2n-1)}}{\longleftrightarrow} K(m,n,\alpha)]>1-\eta,
 \end{equation*}
 where $\beta_n$ is the same as in Lemma~\ref{lem: many trancated loops} and~\ref{lem: many truncated connections to the outer box}.
\end{lemma}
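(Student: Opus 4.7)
The plan is to combine Lemma~\ref{lem: many truncated connections to the outer box}, which provides many vertices of $U(m,n,\beta_n)$ on $\partial B(n)$ connected to $B(m)$, with an independent abundance of $\alpha$-seeds inside $T(m,n)$, and then close the argument with a pigeonhole-plus-independence estimate. The observation that drives the independence is the following: an $\alpha$-seed inside $T(m,n)$ is determined by loops contained in some box $B(y,m)\subset T(m,n)\subset B(2n-1)\setminus B(n-1)$; such loops never meet $B(n-1)$ and so do not belong to $(\mathcal{L}_{\beta_n})_{B(n-1)}^{B(2n-1)}$. Hence the set of seeds is independent of $U(m,n,\beta_n)$.

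First I would arrange disjoint candidate seed-boxes on a regular grid inside $T(m,n)$, with centres $y$ satisfying $y_1=n+m$ and $y_j\in\{m,\,m+(2m+1),\,m+2(2m+1),\ldots\}\cap[m,n-m]$ for $j\geq 2$. Each candidate is an $\alpha$-seed independently with probability $p=p(\alpha,m)>0$, and the corresponding face-patches $B(y,m)\cap\{x_1=n\}$ tile (up to a lower-order remainder) the set $T(n)$ into cells of size $(2m+1)^{d-1}$. By the independence noted above, conditionally on $U$, the probability that no vertex of $U\cap T(n)$ lies in a seed is at most $(1-p)^N$, where $N$ is the number of distinct cells touched by $U\cap T(n)$, and by pigeonhole $N\geq |U\cap T(n)|/(2m+1)^{d-1}$.

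It remains to make $|U\cap T(n)|$ large with high probability. Using $\theta(\alpha)>0$, fix $m$ so large that $\mathbb{P}[B(m)\overset{\mathcal{L}_\alpha}{\longleftrightarrow}\infty]>1-\eta/3$; together with Lemma~\ref{lem: truncated crossing} this gives $\mathbb{P}[B(m)\overset{(\mathcal{L}_\alpha)^{B(n-1)}}{\longleftrightarrow}\partial B(\sqrt{n})]>1-\eta/3$ for all $n$ large. Lemma~\ref{lem: many truncated connections to the outer box} then delivers $|U(m,n,\beta_n)|\geq k$ with probability $\geq 1-\eta/3$, for any prescribed $k$. By invariance of the loop measure under the hyperoctahedral symmetries of $\mathbb{Z}^d$ fixing $B(m)$, $\mathbb{E}[|U\cap\sigma T(n)|]$ is the same for every such $\sigma$, so summing over the $C_d$ symmetric images of $T(n)$ that essentially cover $\partial B(n)$ yields $\mathbb{E}[|U\cap T(n)|]\geq \mathbb{E}[|U|]/C_d$. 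Upgrading this expectation bound to a high-probability lower bound uses the independent-excursion structure from the proof of Lemma~\ref{lem: many trancated loops}: each excursion has, uniformly in its (random) endpoints, a positive probability of contributing at least one landing in $T(n)$, and the number of independent excursions can be made arbitrarily large when $n$ is. Finally, choosing $k$ large enough that $(1-p)^{k'/(2m+1)^{d-1}}<\eta/3$ for the resulting $k'\leq|U\cap T(n)|$ completes the proof.

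The hard step is turning ``many landings on $\partial B(n)$'' into ``many landings in $T(n)$'': hyperoctahedral symmetry handles the expectation, but the concentration requires the independence and uniform-in-endpoints positivity of the excursion landings. This is the main deviation from the cleaner Bernoulli-bond analogue \cite[Lemma~7.9]{GrimmettMR1707339}, where the geometry of the crossing path is controlled directly, and it is most cleanly handled through the Markovian description of random-walk excursions outside $B(n-1)$ already exploited in Lemma~\ref{lem: many trancated loops}. Once this is secured, the combination with the seed-independence of the first two paragraphs is mechanical.
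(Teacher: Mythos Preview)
Your outline is correct up to the last step, but the concentration argument you propose for $|U\cap T(n)|$ does not work. The claim that ``each excursion has, uniformly in its (random) endpoints, a positive probability of contributing at least one landing in $T(n)$'' is false: an excursion of the simple random walk outside $B(n-1)$ whose endpoints lie, say, on the face $\{x_1=-(n-1)\}$ hits $T(n)$ only with probability tending to $0$ as $n\to\infty$. The uniform positivity you invoke in Lemma~\ref{lem: many truncated connections to the outer box} is for landing on $\partial B(n)$ \emph{somewhere}, not in a prescribed orthant of one face; that uniformity is lost as soon as you fix the target to be $T(n)$. Consequently your independent-trials picture gives no lower bound on $|U\cap T(n)|$ that improves as the number of excursions grows.

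The paper avoids this by not using the excursion description at all at this stage. Instead it uses the FKG inequality together with the hyperoctahedral symmetry. Writing $V_\sigma=U\cap\sigma T(n)$ for the $d\cdot 2^d$ images $\sigma T(n)$ of $T(n)$ covering $\partial B(n)$, each $\{\#V_\sigma<k\}$ is a decreasing event, so by FKG
\[
\bigl(\mathbb{P}[\#V<k]\bigr)^{d\cdot 2^d}\;\le\;\mathbb{P}\!\left[\bigcap_\sigma\{\#V_\sigma<k\}\right]\;\le\;\mathbb{P}\!\left[\#U<d\cdot 2^d k\right],
\]
and the right-hand side is made small by Lemma~\ref{lem: many truncated connections to the outer box}. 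This is also why $m$ must be chosen so that $\mathbb{P}[B(m)\overset{\mathcal{L}_\alpha}{\longleftrightarrow}\infty]>1-(\eta/4)^{d\cdot 2^d}$ rather than merely $>1-\eta/3$: one has to survive taking the $(d\cdot 2^d)$-th root. Once $\#V=\#(U\cap T(n))$ is large with high probability, your pigeonhole-plus-independence argument with the disjoint candidate seed boxes goes through exactly as you described (and matches the paper).
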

\begin{proof}
 Since $\theta(\alpha)>0$, we pick $m=m(d,\alpha,\eta)$ such that
 \[\mathbb{P}[B(m)\overset{\mathcal{L}_{\alpha}}{\longleftrightarrow}\infty]>1-(\eta/4)^{d\cdot 2^d}.\]
 Pick $M$ such that $pr\overset{\mathrm{def}}{=}\mathbb{P}[B(m)\text{ is a }\alpha\text{-seed}]>1-(\eta/2)^{1/M}$. We assume $2m+1$ divides $n+1$. Let $V(m,n,\beta_n)=\{x\in T(n):x\overset{(\mathcal{L}_{\beta_n})_{B(n-1)}^{B(2n-1)}}{\longleftrightarrow}B(m)\}$. If $\#V(m,n,\beta_n)\geq (2m+1)^{d-1}M$, then $B(m)$ is joined by loops $(\mathcal{L}_{\beta_n})_{B(n-1)}^{B(2n-1)}$ to at least $M$ of these squares outside of $B(n)$. Therefore, by independence between the loops $(\mathcal{L}_{\beta_n})_{B(n-1)}$ intersecting the box $B(n-1)$ and the loops $(\mathcal{L}_{\beta_n})^{(B(n-1))^c}$ avoiding $B(n-1)$, we have that
 \begin{align}\label{eq: actas1}
  \mathbb{P}[B(m)\overset{(\mathcal{L}_{\beta_n})_{B(n-1)}}{\longleftrightarrow} K(m,n,\alpha)]&\geq (1-(1-pr)^M)\mathbb{P}[\#V(m,n,\beta_n)\geq (2m+1)^{d-1}M]\notag\\
  &\geq (1-\eta/2)\mathbb{P}[\#V(m,n,\beta_n)\geq (2m+1)^{d-1}M].
 \end{align}
 Since $\partial B(n)$ has $d\cdot 2^d$ copies of $T(n)$, by FKG inequality and symmetry,
 \[(\mathbb{P}[\#V(m,n,\beta_n)<(2m+1)^{d-1}M])^{d\cdot 2^d}\leq\mathbb{P}[\#U(m,n,\beta_n)\leq d\cdot 2^d (2m+1)^{d-1}M].\]
 By applying Lemma~\ref{lem: many truncated connections to the outer box} with $k=d\cdot 2^d (2m+1)^{d-1}M$, we pick $n=n(d,\alpha,\delta,\eta)$ large enough such that
 \[\mathbb{P}[\#U(m,n,\beta_n)\leq d\cdot 2^d (2m+1)^{d-1}M,B(m)\overset{\mathcal{L}_{\alpha}}{\longleftrightarrow}\infty]\leq (\eta/4)^{d\cdot 2^d}.\]
 Then,
 \begin{multline*}
  \mathbb{P}[\#U(m,n,\beta_n)\leq d\cdot 2^d (2m+1)^{d-1}M]\leq\mathbb{P}[B(m)\overset{\mathcal{L}_{\alpha}}{\centernot{\longleftrightarrow}}\infty]\\
  +\mathbb{P}[\#U(m,n,\beta_n)\leq d\cdot 2^d (2m+1)^{d-1}M,B(m)\overset{\mathcal{L}_{\alpha}}{\longleftrightarrow}\infty]\leq 2(\eta/4)^{d\cdot 2^d}.
 \end{multline*}
 Consequently, for the same $n$,
 \begin{equation}\label{eq: actas2}
  \mathbb{P}[\#V(m,n,\beta_n)<(2m+1)^{d-1}M]\leq 2^{2^{-d}/d}\eta/4<\eta/2.
 \end{equation}
 The result follows from \eqref{eq: actas1} and \eqref{eq: actas2}.
\end{proof}
We end this section by the proof of the sprinkling lemma (Lemma~\ref{lem: sprinkling}).
\begin{proof}[Proof of Lemma~\ref{lem: sprinkling}]
 Consider the random subset of $B(2n-1)\setminus R$:
 \[\mathcal{S}_{\gamma}(R,m,n)=\left\{x\notin R:x\overset{(\mathcal{L}_{\gamma})^{B(2n-1)\setminus R}_{B(n-1)}}{\longleftrightarrow}K(m,n,\gamma)\right\}.\]
 By definition of $\gamma'$, we see that $\mathcal{S}_{\gamma'}(R,m,n)=\mathcal{S}_{\gamma}(R,m,n)$. Let
 \begin{align*}
  \mu(R,m,n)=&\sum\limits_{\ell}\gamma(\ell)\mu(\ell\cap\mathcal{S}_{\gamma}(R,m,n)\neq\emptyset,\ell\cap R\neq\emptyset,\ell\subset B(2n-1)),\\
  \mu(R,m,n,\delta)=&\sum\limits_{\ell}\delta\mu(\ell\cap\mathcal{S}_{\gamma}(R,m,n)\neq\emptyset,\ell\cap R\neq\emptyset,\ell\subset B(2n-1)).
 \end{align*}
 Define $\mathcal{O}(R,m,n,\gamma)\overset{\mathrm{def}}{=}\{\ell\in\mathcal{L}_{\gamma}:\ell\cap \mathcal{S}_{\gamma}(R,m,n)\neq\emptyset,\ell\cap R\neq\emptyset,\ell\subset B(2n-1)\}$. Similarly, we define $\mathcal{O}(R,m,n,\gamma')$ by replacing $\mathcal{L}_{\gamma}$ by $\mathcal{L}_{\gamma'}$ in the definition. By definition of loop soup, we have that
 \begin{align*}
  &\mathbb{P}[H]=\mathbb{P}[\mathcal{O}(R,m,n,\gamma)=\emptyset]=\mathbb{E}[\exp\{-\mu(R,m,n)\}],\\
  &1-\mathbb{P}[G|H]=\mathbb{P}[\mathcal{O}(R,m,n,\gamma')=\emptyset|\mathcal{O}(R,m,n,\gamma)=\emptyset]=\mathbb{E}[\exp\{-\mu(R,m,n,\delta)\}].
 \end{align*}
Since $\mu(R,m,n,\delta)\geq \frac{\delta}{|\gamma|_{\infty}}\mu(R,m,n)$, by H\"{o}lder's inequality,
 \[1-\mathbb{P}[G|H]\leq (\mathbb{P}[H])^{\min(\delta/|\gamma|_{\infty},1)}.\]
 For $\epsilon>0$ and $\delta>0$, we choose $\eta$ such that $\eta^{\min(\delta/|\gamma|_{\infty},1)}<\epsilon$ and $m=m(d,\alpha,\eta),n=n(d,\alpha,\delta,\eta)$ as in Lemma~\ref{lem: a connection to a seed}. Since $\gamma\geq \alpha+\delta\geq \beta_n$, $(\mathcal{L}_{\beta_n})_{B(n-1)}^{B(2n-1)}$ is stochastically dominated by $(\mathcal{L}_{\gamma})_{B(n-1)}^{B(2n-1)}$. Therefore, $\mathbb{P}[H]\leq \eta$ and $\mathbb{P}(G|H)>1-\epsilon$.
\end{proof}

\section{Proof of Corollary~\ref{cor: continuity of critical curve}}
The non-decrease of $\alpha\rightarrow\kappa_c(\alpha)$ was proven in \cite[Proposition 4.3]{LeJanLemaireMR3263044}. By a direct comparison of intensity measures, we prove the continuity of $\alpha\rightarrow\kappa_c(\alpha)$ in \cite[Proposition 7.4]{loop-perc-Zd}. By \cite[Theorem~1.1]{loop-perc-Zd}, we see that $\kappa_c(\alpha)=0$ for $\alpha<\alpha_c$. We will prove the increase of $\alpha\rightarrow\kappa_c(\alpha)$ is strict for $\alpha\geq\alpha_c$ by Theorem~\ref{thm: truncated loop percolation}. We give the proof for the strict increase of $\alpha\rightarrow\kappa_c(\alpha)$ at $\alpha_c$. The general case is almost the same and is left to the reader. Note that it is enough to prove that for $\alpha>\alpha_c$, there exists $\kappa_0=\kappa_0(\alpha)>0$ such that $\theta(\alpha,\kappa_0)>0$. Set $\alpha'=(\alpha+\alpha_c)/2$. By Theorem~\ref{thm: truncated loop percolation} ii), we choose $n=n(\alpha)$ large enough such that $\alpha_c^{(n)}<\alpha'$. Next, by a comparison of intensity measure, there exists a small enough $\kappa_0$ such that $\mathcal{L}_{\alpha,\kappa_0}$ stochastically dominates $\mathcal{L}_{\alpha',0}^{\leq n}$. In particular, there exists a percolation for $\mathcal{L}_{\alpha,\kappa_0}$ and the proof is complete.

\section{Proof of Corollary~\ref{cor: exponential tail}}
For $d\geq 3$, the proof is the same as in \cite[Theorem~1]{ChayesChayesNewmanMR905331} with minor modification. One can also find the proof in \cite[Theorem~8.21]{GrimmettMR1707339}. The idea is to use slab percolation. By Theorem~\ref{thm: truncated loop percolation}, for a large enough slab of width at least $L_0(d,\alpha,\kappa)$, there is a percolation by loops inside that slab by $\mathcal{L}_{\alpha,\kappa}^{\leq L_0}$. We assume that $L\geq L_0$. We divide $\mathbb{Z}^d$ into parallel slabs. Conditionally on the past, with probability strictly smaller than some $c=c(\alpha,\kappa)<1$, the cluster $\mathcal{C}_{\alpha,\kappa}^{\leq L}(0)$ can pass one slab without intersecting the infinite cluster formed by loops inside that slab. Thus, the probability in Corollary~\ref{cor: exponential tail} decays exponentially fast. In this Markovian loop percolation model, in place of independence between edges, we use independence between loops inside a slab and loops intersecting the complementary of the slab. This is a minor modification and we leave the details to the reader.

\section{Geometry of the infinite cluster in supercritical regime}
Throughout the section, we assume that $d\geq 3$ and $\alpha>\alpha_c(d)$. Most of the time, we give the proof for $\kappa=0$ as the proof is simpler for $\kappa>0$.

In a percolation model with long range dependences, several geometric properties (\cite{DrewitzRathSapozhnikovMR3269990,SapozhnikovProcacciaRosenthal2013,Sapozhnikov2014}) were obtained for supercritical phase under certain assumptions from \cite{DrewitzRathSapozhnikovMR3269990}:
\begin{itemize}
 \item[\textbf{P1}] Every lattice shift is measure preserving and ergodic.
 \item[\textbf{P2}] The model is monotone in parameter $\alpha$.
 \item[\textbf{P3}] Decoupling inequality.
 \item[\textbf{S1}] Local connectness: for a big box $B(R)$ around $0$, with an overwhelming probability, there exists a big cluster (of diameter at least $R$) intersecting $B(R)$ and two vertices inside $B(R)$ which belong to some big clusters (of diameter at least $R/10$) are connected inside $B(2R)$.
 \item[\textbf{S2}] Continuity of percolation probability: $\mathbb{P}[0\overset{\mathcal{L}_{\alpha}}{\longleftrightarrow}\infty]$ is positive and continuous on $]\alpha_c,\infty[$.
\end{itemize}
We refer to \cite{DrewitzRathSapozhnikovMR3269990} for precise formulations. It is relatively easy to verify conditions \textbf{P1}, \textbf{P2}, \textbf{S1} and \textbf{S2} for the loop percolations. Indeed, for SRW loop soup percolation, condition \textbf{P1} is verified in \cite[Proposition 3.2]{loop-perc-Zd} and condition \textbf{P2} is straightforward from the definition of Poisson point process. For condition \textbf{S1}, we deduce it from Theorem~\ref{thm: truncated loop percolation} and Corollary~\ref{cor: exponential tail} by following the argument in \cite[Lemma~7.89]{GrimmettMR1707339}, see details in Subsection~\ref{sect: S1}. Condition \textbf{S2} also follows from classical arguments. Note that for $d\geq 3$ and $n\geq1 $, $\sum\limits_{\ell:\ell\cap B(n)\neq\phi}\mu(\ell)<\infty$. Hence, by Russo's formula (Lemma~\ref{lem: russo formula}), $\mathbb{P}[0\overset{\mathcal{L}_{\alpha}}{\longleftrightarrow} B(n)]$ is continuous in $\alpha$. Thus, as the decreasing limit of non-decreasing continuous function, $\mathbb{P}[0\overset{\mathcal{L}_{\alpha}}{\longleftrightarrow}\infty]$ is right-continuous in $\alpha$. By similar argument for Bernoulli percolation (see \cite{vandenBergKeaneMR737388} or \cite[Lemma~8.10]{GrimmettMR1707339}), we get the left-continuity for $\alpha>\alpha_c$. (We use the standard coupling for Poisson point process and the uniqueness of the infinite cluster was proven in \cite[Proposition 3.3]{loop-perc-Zd}.) 

However, we are not able to prove the decoupling inequalities \textbf{P3}. Thus, we cannot apply the general results in \cite{DrewitzRathSapozhnikovMR3269990, SapozhnikovProcacciaRosenthal2013,Sapozhnikov2014} to the SRW loop soup percolation model. Instead, we follow closely the proof strategy in \cite{Sapozhnikov2014} for truncated models with bounded range independences and show that the untruncated model can be viewed as small perturbation of truncated models. Some care is needed, e.g. the isoperimetric inequalities are not a monotone property. As pointed out by A. Sapozhnikov, it suffices to have a good control of the volume increase of the infinite cluster within a box after we add the big loops $\mathcal{L}_{\alpha}^{>L}$ to the truncated loop soup $\mathcal{L}_{\alpha}^{\leq L}$ for $L$ large enough. The main novelty is the following lemma which controls the size of big loops and finite clusters attached to them.
\begin{lemma}\label{lem: big loop plus small finite cluster}
 For $d\geq 3$ and $\alpha>\alpha_c(d)$, for $\epsilon>0$, there exist $\widetilde{L}=\widetilde{L}(d,\alpha,\epsilon)$, $c=c(d,\alpha,\epsilon)>0$ and $C=C(d,\alpha,\epsilon)<\infty$ such that $\forall L\geq \widetilde{L}$ and $\forall n\geq 1$,
 \begin{equation}\label{eq: lblpsfc1}
  \mathbb{P}\left[\#\left\{x\in B(n) : x\overset{\mathcal{L}_{\alpha}^{\leq L}}{\centernot{\longleftrightarrow}}\infty,\exists \ell\in\mathcal{L}_{\alpha}^{>L}\text{ such that }x\overset{\mathcal{L}_{\alpha}^{\leq L}}{\longleftrightarrow}\ell\right\}>\epsilon n^d\right]<Ce^{-cn^{\frac{d}{d+1}}}.
 \end{equation}
\end{lemma}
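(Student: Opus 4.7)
The plan is to combine a first-moment bound with a two-scale concentration argument. The target rate $e^{-cn^{d/(d+1)}}$ emerges by balancing a cluster-diameter truncation error (of order $e^{-cr}$) against a cell-based Chernoff-type deviation (of order $e^{-c(n/r)^d}$) at the critical scale $r=n^{d/(d+1)}$. Let $\mathcal{C}_x:=\mathcal{C}_\alpha^{\leq L}(x)$ and let $Y_n$ be the random cardinality in \eqref{eq: lblpsfc1}. Conditioning on $\mathcal{L}_\alpha^{\leq L}$, which is independent of $\mathcal{L}_\alpha^{>L}$ by the Poisson construction, and using $1-e^{-t}\leq t$ together with translation invariance,
\[
\mathbb{P}\bigl[x\text{ bad}\mid\mathcal{L}_\alpha^{\leq L}\bigr]\leq\alpha\mathbf{1}_{|\mathcal{C}_x|<\infty}\,\mu\bigl(|\ell|>L,\,\ell\cap\mathcal{C}_x\neq\emptyset\bigr)\leq\alpha|\mathcal{C}_x|\mathbf{1}_{<\infty}\varepsilon(L),
\]
where $\varepsilon(L):=\mu(|\ell|>L,0\in\ell)\to 0$ as $L\to\infty$ since $\mu(0\in\ell)<\infty$ for $d\geq 3$. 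By Theorem~\ref{thm: truncated loop percolation} ii), one has $\alpha>\alpha_c^{(L)}$ for $L$ large enough, so Corollary~\ref{cor: exponential tail} furnishes a uniform exponential tail on $\mathrm{diam}(\mathcal{C}_x)\mathbf{1}_{<\infty}$, giving $\mathbb{E}[|\mathcal{C}_x|\mathbf{1}_{<\infty}]\leq M(d,\alpha)<\infty$. Picking $\widetilde L$ with $\alpha M\varepsilon(\widetilde L)<\epsilon/4$ yields $\mathbb{E}[Y_n]\leq(\epsilon/4)n^d$ for $L\geq\widetilde L$.

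Set $r:=\lfloor n^{d/(d+1)}\rfloor$ and decompose $Y_n=Y_n^{\leq r}+Y_n^{>r}$ according to whether $\mathrm{diam}(\mathcal{C}_x)\leq r$. Corollary~\ref{cor: exponential tail} gives $\mathbb{E}[Y_n^{>r}]\leq Cn^d e^{-cr}$, so Markov yields $\mathbb{P}[Y_n^{>r}>\epsilon n^d/2]\leq Ce^{-cn^{d/(d+1)}}$. For $Y_n^{\leq r}$, partition $B(n)$ into cells $Q_k$ of side $r$ and let $Q_k^*$ be the concentric cube of side $3r$. Each vertex counted in $Y_n^{\leq r}\cap Q_k$ has its truncated cluster contained in $Q_k^*$, so it is produced by some big loop meeting $Q_k^*$. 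The plan is then to split the big loops by diameter at the same scale: for $\mathrm{diam}(\ell)\leq r$ the effect localizes to a bounded neighborhood of each cell, so after an $O_d(1)$-coloring of cells placing cells of one color at pairwise distance $>3r$, the indicators of ``cell $Q_k$ is affected by a short-diameter big loop'' become independent within a color class and a Bernoulli Chernoff bound over the $(n/r)^d$ cells yields rate $\exp(-c(n/r)^d)=\exp(-cn^{d/(d+1)})$; for $\mathrm{diam}(\ell)>r$ a standard SRW heat-kernel estimate gives $\mu(\mathrm{diam}(\ell)>r,0\in\ell)\lesssim r^{-d}$, and Poissonian concentration applied to the sum over such loops (bounding each loop's contribution by the product of its intersection with $B(n+r)$ and the maximum finite cluster size, the latter having exponential tails by Corollary~\ref{cor: exponential tail}) gives a deviation bound of the same order.

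The principal obstacle is the long-range correlation produced by big loops: a single long loop can simultaneously create bad vertices in many distant cells, spoiling any naive cell-independence. This is resolved by the above scale-$r$ splitting: short-diameter big loops respect cell-wise independence under coloring, and long-diameter big loops are controlled via Poissonian concentration thanks to their $r^{-d}$ tail. The scale $r=n^{d/(d+1)}$ is the unique choice equating the error rates $e^{-cr}$ (cluster-diameter truncation), $e^{-c(n/r)^d}$ (cell Chernoff), and the long-loop contribution, all collapsing to $e^{-cn^{d/(d+1)}}$.
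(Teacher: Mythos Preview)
Your overall plan---scale $r=n^{d/(d+1)}$, first-moment control via Corollary~\ref{cor: exponential tail}, cluster-diameter truncation at $r$, cell decomposition with coloring---matches the paper's, and your treatment of $Y_n^{>r}$ and of the short-loop part of $Y_n^{\leq r}$ is essentially right (though the cell variables take values in $[0,r^d]$, so you want Hoeffding for bounded variables, not a Bernoulli Chernoff on indicators). The gap is in the long-loop part.

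The paper does \emph{not} split big loops by diameter. Instead it conditions on the entire ensemble $\mathcal{L}_\alpha^{>L}$ at the outset. After this conditioning, the localized cell contributions $S(\vec j)$ are functions of disjoint pieces of $\mathcal{L}_\alpha^{\leq L}$ alone (truncated loops have length $\leq L$, hence bounded range), so within each color class they are genuinely independent, and Hoeffding gives a deviation from the conditional mean at rate $\exp(-c(n/r)^d)$. The conditional mean is then bounded by $M(d,\alpha)\cdot\#\{y\in B(n+r):\exists\ell\in\mathcal{L}_\alpha^{>L},\,y\in\ell\}$, using your bound $\mathbb{E}[|\mathcal{C}_y|\mathbf{1}_{<\infty}]\leq M$. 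A separate lemma (Lemma~\ref{lem: vertices visited by large loops}) shows this vertex count is $\leq\epsilon' n^d$ with probability $\geq 1-Ce^{-cn^{d-2}}$ once $L$ is large; crucially, $L$ is \emph{fixed}, so this is a fixed-$\epsilon'$ estimate.

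Your diameter split runs into two problems. First, the heat-kernel bound is off: $\mu(\mathrm{diam}(\ell)>r,\,0\in\ell)\lesssim r^{2-d}$, not $r^{-d}$ (a loop of diameter $\gtrsim r$ typically has length $\gtrsim r^2$, and $\sum_{k>r^2}p_k(0,0)\asymp r^{2-d}$; compare Lemma~\ref{lem: mu annulus}). Second, and more seriously, the long-loop contribution to $Y_n^{\leq r}$ is not a simple Poisson functional: each long loop $\ell$ contributes roughly $\sum_{y\in\ell\cap B(n+r)}|\mathcal{C}_y|\mathbf{1}_{<\infty}$, which couples $\mathcal{L}_\alpha^{>L}$ to the random truncated clusters. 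Your proposed bound by $|\ell\cap B(n+r)|\times(\text{max finite cluster size})$ followed by Poisson concentration on $\sum_\ell|\ell\cap B(n+r)|$ is problematic because $|\ell\cap B(n+r)|$ is unbounded (a single loop may cover order $n^2$ or more vertices of $B(n)$), so Campbell-type exponential-moment bounds require precisely the delicate occupation-time argument (Khas'minskii's lemma) that the paper carries out in Lemma~\ref{lem: vertices visited by large loops}---and that lemma is for a fixed truncation level $L$, not for the growing threshold $\mathrm{diam}>r$. Conditioning on $\mathcal{L}_\alpha^{>L}$ first removes the need to split big loops at all: the only Poisson-type input required is the vertex-coverage bound at fixed $L$.
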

 We will explain the way to deduce Theorem~\ref{thm: good balls} from conditions \textbf{P1}, \textbf{S1}, Theorem~\ref{thm: truncated loop percolation} and Lemma~\ref{lem: big loop plus small finite cluster} in Subsection~\ref{sect: good balls}. We postpone the verification of the condition \textbf{S1} in Subsection~\ref{sect: S1}. We prove Lemma~\ref{lem: big loop plus small finite cluster} in Subsection~\ref{sect: proof of lemma big loop}. We prove two preliminary lemmas in Subsection~\ref{sect: proof of lem proba HKaL} and \ref{sect: proof of lem isoperimetric}, which are used as intermediate steps in the proof of Theorem~\ref{thm: good balls}.

\subsection{Proof of Theorem~\ref{thm: good balls}}\label{sect: good balls}
In this subsection, we introduce necessary notation, explain the proof strategy, state two preliminary lemmas and end the subsection by proving Theorem~\ref{thm: good balls}. One preliminary lemma is probabilistic which asserts that certain event $\mathcal{H}_{K,s}^{\alpha,L}$ happens with overwhelming probabilities and the other one is deterministic which states several geometric properties implied by that event, see Lemmas~\ref{lem: proba HKaL} and \ref{lem: isoperimetric}. The proof of the preliminary lemmas are postponed in Subsections~\ref{sect: proof of lem proba HKaL} and \ref{sect: proof of lem isoperimetric}.

We closely follow the notation and strategy in \cite{Sapozhnikov2014}. First, we give a brief explanation, where the notation will be precisely defined later in details. We suppose $\alpha>\alpha_c$ and choose $L$ large enough such that there exists a unique infinite cluster $\mathcal{S}_{\infty}^{\leq L}$ of truncated loops. With a suitable multi-scale renormalization applied to the truncated loop soup $\mathcal{L}_{\alpha}^{\leq L}$ in a big box, we identify certain event $\mathcal{H}_{K,s}^{\alpha,L}$ which happens with overwhelming probabilities (see Definition~\ref{defn: H L} and Lemma~\ref{lem: proba HKaL}) and a subset of good boxes $\mathcal{Q}_{K,s,L_0}^{\leq L}$ with nice connection properties, volume lower bounds and isoperimetric inequalities (see Lemma~\ref{lem: perforated}). Each box in $\mathcal{Q}_{K,s,L_0}^{\leq L}$ of scale length $L_0$ contains a unique macroscopic percolation cluster $\mathcal{C}_x^{\leq L}$ of truncated loops. By construction, the set $\cup_{x\in\mathcal{Q}_{K,s,L_0}^{\leq L}}\mathcal{C}_x^{\leq L}$ is contained in a giant connected component $\widetilde{\mathcal{C}}_{K,s,L_0}^{\leq \infty}$ of $\mathcal{L}_{\alpha}$. Later, we will use $\widetilde{\mathcal{C}}_{K,s,L_0}^{\leq \infty}$ as $\mathcal{C}_{B_{G}(x,r)}$ in Definition~\ref{defn: very regular balls}. Then, by Definition~\ref{defn: very regular balls}, we need basically prove three things: graph distance upper bound, volume lower bound and isoperimetric inequalities on $\widetilde{\mathcal{C}}_{K,s,L_0}^{\leq \infty}$, see \eqref{eq: li2}, \eqref{eq: li4} and \eqref{eq: li5}.

The giant component $\widetilde{\mathcal{C}}_{K,s,L_0}^{\leq \infty}$ is by definition inside the box $[-2L_s,(K+2)L_s)^d$ and is part of the unique infinite cluster if $0$ belongs to the infinite cluster. Moreover, by \textbf{S1}, $\widetilde{\mathcal{C}}_{K,s,L_0}^{\leq \infty}$ will be connected locally to the frame $\cup_{x\in\mathcal{Q}_{K,s,L_0}^{\leq L}}\mathcal{C}_x^{\leq L}$ inside the infinite cluster. In the definition of $\widetilde{\mathcal{C}}_{K,s,L_0}^{\leq \infty}$ in \cite{Sapozhnikov2014}, some special care near the boundary of the box $[-2L_s,(K+2)L_s)^d$ is taken to ensure that the local connection to the frame is indeed inside $\widetilde{\mathcal{C}}_{K,s,L_0}^{\leq \infty}$, see \eqref{eq: defn tilde C L}. Then, we deduce the lower bound on graph distance and lower bound in volume of $\widetilde{\mathcal{C}}_{K,s,L_0}^{\leq \infty}$ from that of $\mathcal{Q}_{K,s,L_0}^{\leq L}$. To deduce the isoperimetric inequality for $A\subset \widetilde{\mathcal{C}}_{K,s,L_0}^{\leq \infty}$ with $C\cdot L_s^{d(d+1)}\leq\#A\leq \frac{1}{2}\cdot\widetilde{\mathcal{C}}_{K,s,L_0}^{\leq \infty}$, we use the isoperimetric inequality for the set $\{x\in\mathcal{Q}_{K,s,L_0}^{\leq L}:\mathcal{C}_x^{\leq L}\subset A\}$ inside $\mathcal{Q}_{K,s,L_0}^{\leq L}$. We will need Lemma~\ref{lem: big loop plus small finite cluster} for the upper bound of $\#\widetilde{\mathcal{C}}_{K,s,L_0}^{\leq\infty}$.

Next, we give precise definitions of necessary notation. For $L\geq 1$, we denote by $\mathcal{S}_{\infty}^{\leq L}$ the unique infinite cluster formed by the loop soup $\mathcal{L}_{\alpha}^{\leq L}$. Let $l_0,r_0$ and $L_0$ be positive integers. Let
 \[l_n=l_0\cdot 4^n,\quad r_n=r_0\cdot 2^n,\quad L_n=l_{n-1}\cdot L_{n-1}=L_0\cdot 2^{n^2}\cdot\left(\frac{l_0}{2}\right)^{n},\quad n\geq 1,\]
 where $(L_n)_n$ is a sequence of rapidly growing scales of boxes and $r_{n-1}L_{n-1}$ are the scales of bad regions in boxes of scale $L_n$ in the renormalization argument. Suppose that $l_0$ is divisible by $r_0$ and $l_0/r_0$ is large enough that
 \begin{equation}\label{eq: pf of good balls asp1}
  r_0/l_0\leq 10^{-13}\min(\theta(\alpha),2^{2-d}),
 \end{equation}
 which implies \cite[Eq. (2.7), (2.19) (3.2)]{Sapozhnikov2014}, the assumption of \cite[Lemma~3.15]{Sapozhnikov2014} and the following inequality
 \begin{equation}\label{eq: product r l lower}
  \prod\limits_{i=0}^{\infty}\left(1-\left(\frac{4r_i}{l_i}\right)^d\right)>1-10^{-12}\theta(\alpha).
 \end{equation}
 We also suppose that $L_0$ is large enough such that
\begin{equation}\label{eq: pf of good balls asp3}
 \theta(\alpha)L_0^d\geq 100.
\end{equation}
 Let $K\geq 1$ be a positive integer. For $s\geq 0$, let $\mathbb{G}_{s}=L_s\cdot\mathbb{Z}^d$ and we give $\mathbb{G}_s$ a graph structure by adding edges between $x,y$ if $||x-y||_1=L_s$. For $x\in\mathbb{Z}^d$, we define a box
 \[Q_{K,s}(x)=x+\mathbb{Z}^d\cap[0,KL_s)^d.\]
 Let $\mathcal{S}_{L_0}^{\leq L}$ be the union of clusters of diameters at least $L_0$ formed by $\mathcal{L}_{\alpha}^{\leq L}$ and let $\mathcal{S}^{\leq L}$ be the union of clusters formed by $\mathcal{L}_{\alpha}^{\leq L}$. For $x\in\mathbb{Z}^d$, let $\mathcal{C}_{K,s,L_0}^{\leq L}(x)$ be the biggest connected component in $(x+[0,KL_s)^d)\cap\mathcal{S}_{L_0}^{\leq L}$. Let $\mathcal{E}_{K,s,L_0}^{\leq L}(x)$ be the set of vertices that are connected locally to $\mathcal{C}_{K,s,L_0}^{\leq L}(x)$:
\[\mathcal{E}_{K,s,L_0}^{\leq L}(x)\overset{\text{def}}{=}\{y':\exists y\in \mathcal{C}_{K,s,L_0}^{\leq L}(x),y\overset{B(y,2L_s),\mathcal{L}_{\alpha}^{\leq L}}{\longleftrightarrow}y'\}.\]
Then, we add the set $\mathcal{E}_{K,s,L_0}^{\leq L}(x)$ to $\mathcal{C}_{K,s,L_0}^{\leq L}(x)$ and define
\begin{equation}\label{eq: defn tilde C L}
 \widetilde{\mathcal{C}}_{K,s,L_0}^{\leq L}(x)\overset{\text{def}}{=}\mathcal{C}_{K,s,L_0}^{\leq L}(x)\cup\mathcal{E}_{K,s,L_0}^{\leq L}(x).
\end{equation}
As we have mentioned before, this special care near the boundary is taken such that the open path which connects locally two vertices of $\widetilde{\mathcal{C}}_{K,s,L_0}^{\leq L}(x)$ is contained in $\widetilde{\mathcal{C}}_{K,s,L_0}^{\leq L}(x)$. Later, when we verify a ball $B_{\mathcal{S}_{\infty}}(x,r)$ is regular according to Definition~\ref{defn: very regular balls}, we will use $\widetilde{\mathcal{C}}_{K,s,L_0}^{\leq L}(x')$ as $\mathcal{C}_{B_{\mathcal{S}_{\infty}}(x,r)}$ for some suitable chosen $x',K$ and $s$. For $x=0$, we write respectively $\mathcal{C}_{K,s,L_0}^{\leq L}$, $\mathcal{E}_{K,s,L_0}^{\leq L}$ and $\widetilde{\mathcal{C}}_{K,s,L_0}^{\leq L}$ for $\mathcal{C}_{K,s,L_0}^{\leq L}(0)$, $\mathcal{E}_{K,s,L_0}^{\leq L}(0)$ and $\widetilde{\mathcal{C}}_{K,s,L_0}^{\leq L}(0)$. 

Next, we define the notion of good events, which is used to identify a frame $\cup_{\mathcal{Q}_{K,s,L_0}^{\leq L}}\mathcal{C}_{x}$. A vertex $x\in \mathbb{G}_0$ is called a \emph{$0$-good vertex} (with respect to $\mathcal{L}_{\alpha}^{\leq L}$) if
\begin{itemize}
 \item $\#(\mathcal{S}_{L_0}^{\leq L}\cap(x+[0,L_0)^d))<\frac{11}{10}\theta^{(L)}(\alpha)L_0^d$,
 \item for each $y\in\mathbb{G}_0$ with $||y-x||_1\leq L_0$, the set $\mathcal{S}_{L_0}^{\leq L}\cap(y+[0,L_0)^d)$ contains a connected component $\mathcal{C}_y^{\leq L}$ with at least $\frac{9}{10}\theta^{(L)}(\alpha)L_0^d$ vertices such that for all $y\in\mathbb{G}_0$ with $||y-x||_1\leq L_0$, $\mathcal{C}_y^{\leq L}$ and $\mathcal{C}_x^{\leq L}$ are connected in $\mathcal{S}^{\leq L}\cap((x+[0,L_0)^d))\cup((y+[0,L_0)^d))$, where $\mathcal{S}^{\leq L}$ are the clusters formed by $\mathcal{L}_{\alpha}^{\leq L}$. 
\end{itemize}
Recursively, we define a vertex $x\in\mathbb{G}_{n+1}$ to be \emph{$(n+1)$-good} if there does not exist any pair of vertices $(x_1,x_2)$ such that $||x_1-x_2||_{\infty}\geq r_{n}L_n$ and $x_1,x_2$ are \emph{$n$-bad} (i.e. they are not $n$-good). Note that this is slightly different from the definition used in \cite{Sapozhnikov2014}, where there may have two $n$-bad vertices in the definition of $(n+1)$-good vertices, see \cite[Section~2.1,3.1]{Sapozhnikov2014}. Their only tool to control the correlations between boxes is the decoupling inequalities for monotone events. But we have independence for boxes with distance bigger than $L$ for truncated loop models $\mathcal{L}_{\alpha}^{\leq L}$. For that reason, we use a simplified definition. Moreover, $(r_n)_n$ need not be growing and \textbf{S2} is not needed. Also, note that one could use the notion of good boxed in the sense of \cite{DeuschelPisztoraMR1384041} for the truncated model, see \cite[Section~7.4]{GrimmettMR1707339}. However, we decide to follow the notation of \cite{Sapozhnikov2014}.

Next, we identify the frame by removing the bad regions:
\begin{multline}\label{eq: defn frame}
 \mathcal{Q}_{K,s,0}^{\leq L}(x_s)=\mathbb{G}_0\cap (x_s+[-2L_s,(K+2)L_s)^d)\\
 -\cup_{n<s}\cup_{z_n\text{ is }n\text{-bad}}(z_n+[0,2r_{n}L_{n})^d)\cap(\lfloor z_n/L_{n+1}\rfloor+[0,L_{n+1})^d)),
\end{multline}
where $\lfloor z\rfloor\overset{\text{def}}{=}(\lfloor z^1\rfloor,\ldots,\lfloor z^d\rfloor)$ for $z=(z^1,\ldots,z^d)\in\mathbb{R}^d$. We didn't remove bad regions related to $s$-bad vertices since we will assume later that all vertices in $\mathbb{G}_s\cap (x_s+[-2L_s,(K+2)L_s)^d)$ are $s$-good.

Next, we define the event $\mathcal{H}_{K,s}^{\alpha,L}$ and state two preliminary lemmas.
\begin{defn}\label{defn: H L}
 For $x_s\in\mathbb{G}_s$, the event $\mathcal{H}^{\alpha,L}_{K,s}(x_s)$ occurs if
 \begin{itemize}
  \item [a)] all the vertices in $\mathbb{G}_{s}\cap (x_s+[-2L_s,(K+2)L_s)^d)$ are $s$-good with respect to $\mathcal{L}_{\alpha}^{\leq L}$,
  \item [b)] Any $x,y\in\mathcal{S}_{L_s}\cap (x_s+[0,KL_s)^d)$ with $||x-y||_{\infty}\leq L_s$, we have that $x\overset{B(x,2L_s),\mathcal{L}_{\alpha}}{\longleftrightarrow}y$.
  \item [c)] For all $\vec{k}\in\{-2,\ldots,K+1\}^d$, we have that
 \[\#\left\{x\in B^{(L_s)}(\vec{k}) : x\overset{\mathcal{L}_{\alpha}^{\leq L}}{\centernot{\longleftrightarrow}}\infty,\exists \ell\in\mathcal{L}_{\alpha}^{>L}\text{ such that }x\overset{\mathcal{L}_{\alpha}^{\leq L}}{\longleftrightarrow}\ell\right\}\leq \frac{\theta(\alpha)}{100} L_s^d.\]
 \end{itemize}
 We write $\mathcal{H}_{K,s}^{\alpha,L}$ for $\mathcal{H}_{K,s}^{\alpha,L}(0)$.
\end{defn}

For the probability of the event $\mathcal{H}^{\alpha,L}_{K,s}$, we have
\begin{lemma}\label{lem: proba HKaL}
 There exists $\tilde{L}=\tilde{L}(d,\alpha)<\infty$ such that for $L>\tilde{L}$, there exist $C(d,\alpha,L)<\infty$, $c(d,\alpha,L)>0$ and $C'=C'(d,\alpha,l_0,L)$ such that for $r_0\geq C$, $l_0\geq Cr_0$ and $L_0>C'$, we have that
\begin{equation}\label{eq: proba tilde H}
 \mathbb{P}[\mathcal{H}_{K,s}^{\alpha,L}]\geq 1-CK^d(2^{-2^s}+L_s^{2d}e^{-cL_s}+e^{-cL_s^{\frac{d}{d+1}}}).
\end{equation}
\end{lemma}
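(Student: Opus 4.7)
The plan is to decompose the event $\mathcal{H}_{K,s}^{\alpha,L}$ according to its three defining conditions (a), (b), (c) in Definition~\ref{defn: H L} and bound the failure probability of each separately, then combine by union bound. The factor $K^d$ in \eqref{eq: proba tilde H} will arise from enumerating, respectively, the $O(K^d)$ grid vertices at scale $s$ in (a), the $O(K^d L_s^{2d})$ close pairs in (b), and the $O(K^d)$ boxes $B^{(L_s)}(\vec{k})$ in (c). The three summands in \eqref{eq: proba tilde H} will come from these three bounds.

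For condition (a), I will show by multiscale induction that $p_n := \mathbb{P}[0 \text{ is } n\text{-bad}]$ decays doubly exponentially. The simplified definition of $(n{+}1)$-goodness adopted in the paper --- the nonexistence of two $n$-bad vertices at distance $\geq r_n L_n$ in the relevant box at scale $L_{n+1}$ --- together with the fact that $n$-goodness of a vertex $x \in \mathbb{G}_n$ is measurable with respect to loops of $\mathcal{L}_{\alpha}^{\leq L}$ whose trace meets a neighborhood of $x$ of radius $O(L_n + L)$, yields by independence
\[
p_{n+1} \leq \binom{l_n^d}{2}\, p_n^{\,2} \leq l_n^{2d}\, p_n^{\,2},
\]
valid once $r_n L_n$ exceeds the dependence radius $O(L_n + L)$, which holds as soon as $n$ is large enough (small $n$ being absorbed into constants). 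Setting $\tilde{q}_n := 4^{2d} l_n^{2d} p_n$ and using $l_{n+1} = 4 l_n$, this becomes $\tilde{q}_{n+1} \leq \tilde{q}_n^{\,2}$, whence $\tilde{q}_n \leq \tilde{q}_0^{\,2^n}$. It then suffices to arrange $\tilde{q}_0 \leq 1/2$, i.e.\ $p_0 \leq 1/(2 \cdot 4^{2d} l_0^{2d})$, which is precisely the content of the threshold $L_0 \geq C'(d, \alpha, l_0, L)$. The base estimate on $p_0$ is obtained by standard renormalization for the truncated model: since $\alpha > \alpha_c^{(L)}$ by Theorem~\ref{thm: truncated loop percolation} ii) for $L$ large, the density of $\mathcal{S}_{L_0}^{\leq L}$ in a box of side $L_0$ concentrates around $\theta^{(L)}(\alpha) L_0^d$ and the macroscopic component in that box is essentially unique and locally connected to those in neighboring boxes, each deviation carrying stretched-exponential cost in $L_0$. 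A union bound over the $(K+4)^d$ vertices of $\mathbb{G}_s \cap (x_s + [-2L_s, (K+2)L_s)^d)$ produces the $CK^d \cdot 2^{-2^s}$ term.

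For condition (b), each of the $O(K^d L_s^{2d})$ ordered pairs $(x,y)$ with $\|x-y\|_\infty \leq L_s$ inside $x_s + [0, KL_s)^d$ fails the required local connection with probability at most $C e^{-cL_s}$ by the local connectedness property \textbf{S1}, verified in Subsection~\ref{sect: S1} as an adaptation of \cite[Lemma~7.89]{GrimmettMR1707339}; union bounding gives the $CK^d L_s^{2d} e^{-cL_s}$ term. For condition (c), Lemma~\ref{lem: big loop plus small finite cluster} applied with $n = L_s$ and $\epsilon = \theta(\alpha)/100$ gives a failure probability at most $C e^{-c L_s^{d/(d+1)}}$ per box $B^{(L_s)}(\vec{k})$, provided $L \geq \widetilde{L}(d, \alpha, \theta(\alpha)/100)$; summing over the $(K+4)^d$ such boxes yields the $CK^d e^{-c L_s^{d/(d+1)}}$ term. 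The main obstacle is the quantitative base case $p_0 \leq 1/(2 \cdot 4^{2d} l_0^{2d})$ in the recursion: the bound on $p_0$ must beat the polynomial factor $l_n^{2d}$ appearing across scales, which is precisely why the admissible $L_0$ depends on $l_0$; once this base estimate is secured (through Grimmett--Marstrand-type renormalization for $\mathcal{L}_{\alpha}^{\leq L}$, relying on FKG and the independence of disjoint loop subsets), iterating the recursion and combining the three bounds produces \eqref{eq: proba tilde H}.
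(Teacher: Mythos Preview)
Your proposal follows essentially the same three-part decomposition as the paper's proof: bound the failure of conditions (a), (b), (c) separately and take a union bound. For (a) the paper simply cites \cite[Lemma~3.2]{Sapozhnikov2014} (with $\epsilon_P=\infty$) rather than spelling out the recursion $p_{n+1}\leq l_n^{2d}p_n^2$ as you do; for (c) it invokes Lemma~\ref{lem: big loop plus small finite cluster} exactly as you describe.

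One small gap in your treatment of (b): the condition concerns pairs $x,y\in\mathcal{S}_{L_s}$ (vertices in clusters of diameter $\geq L_s$ for the full loop soup $\mathcal{L}_\alpha$), whereas the local-connectedness Lemma~\ref{lem: local connectness} has as hypothesis that both $x$ and $y$ are connected to a far boundary $\partial B(bn)$. The paper handles this by inserting an intermediate step: it first uses Corollary~\ref{cor: exponential tail} to bound the probability that some $x\in\mathcal{S}_{L_s}\cap[0,KL_s)^d$ is \emph{not} in $\mathcal{S}_\infty$ (a finite cluster of diameter $\geq L_s$ costs $Ce^{-cL_s}$), and only then applies local connectedness to pairs $x,y\in\mathcal{S}_\infty$. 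Your direct appeal to \textbf{S1} glosses over this reduction; it is easily fixed but should be made explicit to match the hypotheses of the lemma you are invoking.
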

The event $\mathcal{H}^{\alpha,L}_{K,s}(x_s)$ implies nice geometric properties of $\widetilde{\mathcal{C}}_{K,s,L_0}^{\leq \infty}(x_s)\cap B^{(L_s)}(\vec{k})$, see
\begin{lemma}\label{lem: isoperimetric}
 We suppose that $\mathcal{H}_{K,s}^{\alpha,L}(x_s)$ occurs for $x_s\in\mathbb{G}_s$ and that $\theta^{(L)}(\alpha)>\frac{99}{100}\theta(\alpha)$. (By Theorem~\ref{thm: truncated loop percolation} and Remark~\ref{rem: theta L to theta}, we have $\theta^{(L)}(\alpha)>\frac{99}{100}\theta(\alpha)$ for $L$ large enough.) Then, we have the following properties:
 \begin{itemize}
  \item[a)] Volume control: for $\vec{k}\in\{-2,\ldots,K+1\}^d$, we have that
  \begin{equation}\label{eq: li2}
   \#\left(\widetilde{\mathcal{C}}_{K,s,L_0}^{\leq \infty}(x_s)\cap B^{(L_s)}(\vec{k})\right)\geq \frac{8}{11}\theta(\alpha)L_s^d.
  \end{equation}
  \item[b)] Connectivity property: for $m\geq 1$ and $x,y\in\widetilde{\mathcal{C}}_{K,s,L_0}^{\leq\infty}(x_s)$ with $||x-y||_{\infty}\leq mL_s$,
  \begin{equation}\label{eq: li3.5}
   x\text{ and }y\text{ are connected by open edges inside }\widetilde{\mathcal{C}}_{L,s,L_0}^{\leq\infty}(x_s)\cap B(x,(16+m)L_s).
  \end{equation} There exists $C=C(L_0)$ such that for all $y,y'\in \widetilde{\mathcal{C}}_{K,s,L_0}^{\leq\infty}(x_s)$,
  \begin{equation}\label{eq: li4}
   d_{\mathcal{S}}(y,y')\leq C\max(||y-y'||_{\infty},L_s^d).
  \end{equation}
  \item[c)] Isoperimetric inequality: $\forall \delta>0$, there exist $C=C(d,\alpha)$ and  $\gamma=\gamma(d,L_0,\delta)$ such that for $A\subset \widetilde{\mathcal{C}}_{K,s,L_0}^{\leq\infty}(x_s)$ with $CL_s^{d(d+1)}\leq \#A\leq (1-\delta)\#\widetilde{\mathcal{C}}_{K,s,L_0}^{\leq\infty}(x_s)$,
  \begin{equation}\label{eq: li5}
   \#\partial_{\widetilde{\mathcal{C}}_{K,s,L_0}^{\leq\infty}(x_s)}A\geq \gamma\cdot(\#A)^{\frac{d-1}{d}}.
  \end{equation}
 \end{itemize}
\end{lemma}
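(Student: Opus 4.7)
The plan is to prove the three conclusions in sequence, all exploiting the common renormalization structure encoded in $\mathcal{H}_{K,s}^{\alpha,L}(x_s)$: the perforated frame $\mathcal{Q}_{K,s,0}^{\leq L}(x_s)$ retains almost every $L_0$-subbox of the range, each retained $0$-good vertex carries a macroscopic truncated cluster $\mathcal{C}_y^{\leq L}$, and these local clusters are mutually linked through the frame and extended into the two-box-wide boundary shell by $\mathcal{E}_{K,s,L_0}^{\leq L}$.

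For part a), I will first count the $L_0$-boxes retained in $\mathcal{Q}_{K,s,0}^{\leq L}(x_s)\cap B^{(L_s)}(\vec{k})$. Since every vertex of $\mathbb{G}_s$ in the range is $s$-good, at each scale $n<s$ the $n$-bad vertices inside an $(n+1)$-good box lie in an $\ell^\infty$-cube of side $r_nL_n$, so the removed volume at scale $n$ is bounded by $(4r_n/l_n)^d$ times the volume of an $L_{n+1}$-box; the product \eqref{eq: product r l lower} then gives a retained fraction $\geq 1-10^{-12}\theta(\alpha)$. Each retained $0$-good vertex contributes $\geq \tfrac{9}{10}\theta^{(L)}(\alpha)L_0^d$ vertices to $\mathcal{C}_{K,s,L_0}^{\leq L}(x_s)$; the two-box-wide boundary shells $\vec{k}\in\{-2,-1,K,K+1\}^d\setminus\{0,\ldots,K-1\}^d$ are handled by $\mathcal{E}$ together with clause b) of $\mathcal{H}$. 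Combining $\theta^{(L)}(\alpha)\geq \tfrac{99}{100}\theta(\alpha)$ with the inclusion $\widetilde{\mathcal{C}}_{K,s,L_0}^{\leq L}\subset\widetilde{\mathcal{C}}_{K,s,L_0}^{\leq\infty}$ yields \eqref{eq: li2}.

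For part b), any $x,y\in\widetilde{\mathcal{C}}_{K,s,L_0}^{\leq\infty}(x_s)$ with $\|x-y\|_\infty\leq mL_s$ will first be pulled into $\mathcal{C}_{K,s,L_0}^{\leq L}(x_s)$ within distance $2L_s$ by the definition of $\mathcal{E}$, then linked by a chain of neighboring $0$-good $L_0$-boxes in $\mathcal{Q}_{K,s,0}^{\leq L}(x_s)$ whose macroscopic clusters are joined via the $0$-good neighbor clause. Perforations around $n$-bad vertices force only local detours of $\ell^\infty$-length $\lesssim L_s$, which are reconnected inside $\widetilde{\mathcal{C}}$ through clause b) of $\mathcal{H}$, so the whole path fits in $B(x,(16+m)L_s)$, giving \eqref{eq: li3.5}. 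The graph-distance bound \eqref{eq: li4} then follows from a crude chain-counting argument: the path passes through at most $O(\|y-y'\|_\infty/L_0)+O(L_s^{d-1})$ consecutive $L_0$-boxes, each of internal diameter at most $L_0^d$.

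Part c) is the main technical obstacle. Following the strategy of \cite{Sapozhnikov2014}, I will coarse-grain $A$ at scale $L_s$ via the local density $\kappa_{\vec{k}}=\#(A\cap B^{(L_s)}(\vec{k}))/\#(\widetilde{\mathcal{C}}_{K,s,L_0}^{\leq\infty}(x_s)\cap B^{(L_s)}(\vec{k}))$, and split the $\vec{k}$'s into \emph{full} ($\kappa_{\vec{k}}$ near $1$), \emph{empty} ($\kappa_{\vec{k}}$ near $0$) and \emph{intermediate} classes. For intermediate boxes I will derive an $L_s$-scale Poincar\'e inequality on $\widetilde{\mathcal{C}}_{K,s,L_0}^{\leq\infty}(x_s)\cap B^{(L_s)}(\vec{k})$ by lifting the standard $\mathbb{Z}^d$-Poincar\'e inequality through the macroscopic-cluster structure from parts a) and b), producing $\gtrsim L_s^{d-1}$ boundary vertices per intermediate box. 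For the full/empty interface, I will apply the classical $\mathbb{G}_s$-isoperimetric inequality to the set of full $\vec{k}$'s and transfer each $\mathbb{G}_s$-boundary edge into $\gtrsim L_s^{d-1}$ boundary vertices of $\widetilde{\mathcal{C}}$ through part b). Clause c) of $\mathcal{H}$, i.e.\ Lemma~\ref{lem: big loop plus small finite cluster}, is what ensures that the non-truncated cluster inside each $L_s$-box is faithfully represented by the truncated skeleton up to a slack of $\tfrac{\theta(\alpha)}{100}L_s^d$, which is absorbed by the margin in a). The constraint $\#A\geq CL_s^{d(d+1)}$ is precisely what makes the coarse-grained picture resolve cleanly; the hardest step will be tuning the full/empty thresholds so that the three contributions combine into the Cheeger-type bound $\gtrsim(\#A)^{(d-1)/d}$.
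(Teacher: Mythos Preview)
Your plan for parts a) and b) is essentially what the paper does, with one omission worth flagging: to get $\cup_{x\in\mathcal{Q}}\mathcal{C}_x^{\leq L}\subset\widetilde{\mathcal{C}}_{K,s,L_0}^{\leq\infty}$ you must check that the frame sits inside the \emph{largest} component of $\mathcal{S}_{L_0}^{\leq\infty}\cap[0,KL_s)^d$. This is where clause c) of $\mathcal{H}_{K,s}^{\alpha,L}$ first enters---the paper compares the frame's volume \eqref{eq: sapo eq 3.3} against the total volume bound \eqref{eq: variant sapo eq 3.4}, and the latter requires c).

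Part c) has a genuine gap. You coarse-grain at scale $L_s$ and then invoke a ``local Poincar\'e inequality on $\widetilde{\mathcal{C}}\cap B^{(L_s)}(\vec{k})$'' for the intermediate boxes, claiming it lifts from $\mathbb{Z}^d$ via a) and b). But a) and b) give only a volume lower bound and a \emph{single} connecting path; neither yields a quantitative isoperimetric bound inside one $L_s$-box---that is the same problem one scale down, and nothing in your outline breaks the circularity. Likewise, transferring one $\mathbb{G}_s$-boundary edge into $\gtrsim L_s^{d-1}$ boundary vertices of $A$ would require many edge-disjoint crossings between neighboring $L_s$-boxes, which b) does not supply. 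The paper instead coarse-grains at scale $L_0$: it sets $\mathcal{K}=\{x\in\mathbb{G}_0:(x+[0,L_0)^d)\cap A\neq\emptyset\}$ and $\mathcal{K}'=\{x\in\mathcal{K}:(x+[0,L_0)^d)\cap A=(x+[0,L_0)^d)\cap\widetilde{\mathcal{C}}\}$, and splits on whether $\#\mathcal{K}'\leq\tfrac12\#\mathcal{K}$. If so, each box of $\mathcal{K}\setminus\mathcal{K}'$ produces a boundary vertex via \eqref{eq: li3.5}; if not, one passes to $\mathcal{A}=\{x\in\mathcal{Q}_{K,s,0}^{\leq L}:\mathcal{C}_x^{\leq L}\subset A\}$ and applies the isoperimetric inequality on the multi-scale perforated lattice $\mathcal{Q}_{K,s,0}^{\leq L}$ from Lemma~\ref{lem: perforated} (i.e., \cite[Lemma~2.4, Corollary~2.5]{Sapozhnikov2014}). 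That lemma is the key black box you are missing: it already encodes the renormalization across all scales $0,\ldots,s-1$, so no recursive Poincar\'e argument is needed. Clause c) of $\mathcal{H}$ is used a second time here, to upper-bound $\#\widetilde{\mathcal{C}}^{\leq\infty}$ and hence ensure $\#\mathcal{A}\leq\tfrac{11}{12}\#\mathcal{Q}_{K,s,0}^{\leq L}$ so that Lemma~\ref{lem: perforated} applies.
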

We postpone the proof of Lemma~\ref{lem: proba HKaL} in Subsection~\ref{sect: proof of lem proba HKaL} and the proof of Lemma~\ref{lem: isoperimetric} in Subsection~\ref{sect: proof of lem isoperimetric}. Finally, we end this subsection by deducing Theorem~\ref{thm: good balls} from the preliminary lemmas, namely, Lemmas \ref{lem: proba HKaL} and \ref{lem: isoperimetric}.

\begin{proof}[Proof of Theorem~\ref{thm: good balls}]
 We will prove the theorem for $\epsilon=\frac{1}{2(d+2)}$. We assume in the following context that $\mathcal{H}_{2K,s}^{\alpha,L}(x_s)$ occurs, where $x_s=(-KL_s,\ldots,-KL_s)$ and $K=\lceil R/L_s\rceil+1$. By the property $b)$ in the definition of $\mathcal{H}_{2K,s}^{\alpha,L}$, when $0\in\mathcal{S}_{\infty}$, we have that $0\in\widetilde{\mathcal{C}}_{K,s,L_0}^{\leq\infty}\subset \mathcal{S}_{\infty}$. Also, note that the ball $B_{\mathcal{S}_{\infty}}(0,R)$ is contained in the box $B(0,R)=[-R,R]^d\subset\mathbb{Z}^d$. We take $s=\left\lceil(\log R)^{1/2}(2d(d+1)(d+2))^{-1/2}\right\rceil$, then there exists $R_0(d,l_0,L_0)<\infty$ such that for $R\geq R_0$, 
  \[L_s=L_0\cdot 2^{s^2}\left(\frac{l_0}{2}\right)^{s}=L_0\cdot R^{\frac{\log 2}{2d(d+1)(d+2)}+o_{R}(1)}\left(\frac{l_0}{2}\right)^{\sqrt{\frac{\log R}{2d(d+1)(d+2)}}+o_R(1)}\leq R^{\frac{1}{2d(d+1)(d+2)}}.\]
  Next, we take $N_{\mathcal{S}_{\infty}(0,R)}=L_s^{d(d+1)}$. Then, for $R\geq R_0$, $N_{\mathcal{S}_{\infty}}(0,R)\leq R^{\epsilon}$. Let $r\in[N_{\mathcal{S}_{\infty}(0,R)},R]$ and consider $B_{\mathcal{S}_{\infty}}(y,r)\subset B_{\mathcal{S}_{\infty}}(0,R)$ for $y\in\mathbb{Z}^d$. Let $K'=\lceil\frac{r}{L_s}\rceil+1$. Then, we have that $K'\geq L_s^{d(d+1)-1}$. We take $y_s\in\mathbb{G}_s$ such that $y\in y_s+[0,L_s)^d$. Then, we have that
  \[B_{\mathcal{S}_{\infty}}(y,r)\subset B(y_s,K'L_s))\subset B(0,KL_s).\]
  We take $\mathcal{C}_{B_{\mathcal{S}_{\infty}}(y,r)}$ to be $\widetilde{\mathcal{C}}_{2K',s,L_0}^{\leq\infty}(z_s)$ where $z_s=y_s-(K'L_s,\ldots,K'L_s)$. By \eqref{eq: li2}, there exists $y'\in (y_s+[0,L_s)^d)\cap \widetilde{\mathcal{C}}_{2K',s,L_0}^{\leq\infty}(z_s)$. Hence, by the property $b)$ of the event $\mathcal{H}_{2K,s}^{\alpha,L}(x_s)$, \[B_{\mathcal{S}_{\infty}}(y,r)\subset \mathcal{C}_{B_{\mathcal{S}_{\infty}}(y,r)}.\]
  By graph distance bounds \eqref{eq: li4} in $\mathcal{C}_{B_{\mathcal{S}_{\infty}}(y,r)}$, there exists $C_W=C_W(L_0)<\infty$ such that
  \[\mathcal{C}_{B_{\mathcal{S}_{\infty}}(y,r)}\subset B_{\mathcal{S}_{\infty}}(y,C_Wr).\]
  Hence, by \eqref{eq: li2}, there exists $c=c(d,\alpha,L_0)>0$ such that
  \[\#B_{\mathcal{S}_{\infty}}(y,r)\geq cr^d\]
  By \eqref{eq: li5}, for $A\subset \mathcal{C}_{B_{\mathcal{S}_{\infty}}(y,r)}$ with $\#A\leq\frac{1}{2}\cdot\#\mathcal{C}_{B_{\mathcal{S}_{\infty}(y,r)}}$, there exists $C_P(d,\alpha,L_0)<\infty$ such that
  \[\#\partial_{\mathcal{C}_{B_{\mathcal{S}_{\infty}}(y,r)}}A\geq \frac{\#A}{r\sqrt{C_P}}.\]
  Finally, the proof is complete by \eqref{eq: proba tilde H}.
\end{proof}

\subsection{Local connection property \textbf{S1}}\label{sect: S1}
In this section, we verify the condition \textbf{S1} by using Theorem~\ref{thm: truncated loop percolation}, Corollary~\ref{cor: exponential tail} and following the argument in \cite[Lemma~7.89]{GrimmettMR1707339}. We split \textbf{S1} into the following two lemmas and prove them separately.
\begin{lemma}\label{lem: connection from a big box}
 For $d\geq 3$ and $\alpha>\alpha_c(d)$, there exist $L_0=L_0(d,\alpha)<\infty$ and $c=c(d,\alpha)>0$ such that $\forall n\geq L_0(d,\alpha)$,
 \begin{equation}
  \mathbb{P}\left[B(n)\overset{\mathcal{L}_{\alpha}^{\leq L_0}}{\longleftrightarrow}\infty\right]\geq 1-e^{-cn^{d-2}}.
 \end{equation}
\end{lemma}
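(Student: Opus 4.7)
The approach adapts Grimmett's slab argument \cite[Lemma~7.89]{GrimmettMR1707339} to the truncated loop soup, using Theorem~\ref{thm: truncated loop percolation}~ii) to supply slab percolation at intensity $\alpha$ and using the Poisson nature of $\mathcal{L}_\alpha^{\leq L_0}$ to supply independence across disjoint slabs. The plan is to cover $B(n)$ by $\sim (n/L_0)^{d-2}$ pairwise disjoint parallel slabs of width $L_0$, show that in each slab some fixed vertex is in an infinite slab-cluster with probability at least some constant $p>0$, and conclude by independence that the failure probability is at most $(1-p)^{\sim (n/L_0)^{d-2}}$.

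\emph{Step 1 (choice of $L_0$ and positive in-slab connection probability).} By Theorem~\ref{thm: truncated loop percolation}~ii) we fix $L_0 = 2m$ large enough that $\widetilde{\alpha}_c^{(L_0)}<\alpha$. By definition of $\widetilde{\alpha}_c^{(L_0)}$, the origin of $\mathrm{Slab}(L_0) = \mathbb{Z}_+^2\times\{0,\ldots,L_0\}^{d-2}$ is in an infinite cluster produced by loops of $\mathcal{L}_\alpha^{\leq L_0}$ contained in $\mathrm{Slab}(L_0)$ with positive probability. Enlarging the first-two-coordinate ambient set from $\mathbb{Z}_+^2$ to $\mathbb{Z}^2$ only adds loops and can only merge clusters, so the same property holds inside the full slab $\mathcal{S} := \mathbb{Z}^2\times\{0,\ldots,L_0\}^{d-2}$; translation invariance in the two $\mathbb{Z}^2$ directions then yields a constant $p=p(d,\alpha,L_0)>0$ such that
\[\mathbb{P}\bigl[(v_1,v_2,0,\ldots,0)\overset{(\mathcal{L}_\alpha^{\leq L_0})^{\mathcal{S}}}{\longleftrightarrow}\infty\bigr]\geq p\qquad\text{for every }(v_1,v_2)\in\mathbb{Z}^2.\]

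\emph{Steps 2--3 (tiling, independence, product bound).} For $\vec{k}\in\mathbb{Z}^{d-2}$ set $S_{\vec{k}} := (L_0+1)\vec{k}+\{0,\ldots,L_0\}^{d-2}$ and $\mathcal{S}_{\vec{k}}:=\mathbb{Z}^2\times S_{\vec{k}}$; these vertex sets are pairwise disjoint and each $\mathcal{S}_{\vec{k}}$ is a translate of $\mathcal{S}$. Let $\mathcal{K}_n := \{\vec{k}: S_{\vec{k}}\subset\{-n,\ldots,n\}^{d-2}\}$, so $|\mathcal{K}_n|\geq c_d(n/L_0)^{d-2}$ for $n\geq L_0$. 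For each $\vec{k}\in\mathcal{K}_n$ choose a vertex $v_{\vec{k}}\in B(n)\cap\mathcal{S}_{\vec{k}}$ of the form $(v_1,v_2,(L_0+1)\vec{k})$ with $(v_1,v_2)\in\{-n,\ldots,n\}^2$, and define
\[E_{\vec{k}} := \bigl\{v_{\vec{k}}\overset{(\mathcal{L}_\alpha^{\leq L_0})^{\mathcal{S}_{\vec{k}}}}{\longleftrightarrow}\infty\bigr\}.\]
Each $E_{\vec{k}}$ depends only on $(\mathcal{L}_\alpha^{\leq L_0})^{\mathcal{S}_{\vec{k}}}$, and disjointness of the $\mathcal{S}_{\vec{k}}$'s combined with the Poisson property makes these restricted loop soups, hence the events $E_{\vec{k}}$, mutually independent. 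By Step~1 and translation invariance, $\mathbb{P}[E_{\vec{k}}]\geq p$, and clearly $\bigcup_{\vec{k}}E_{\vec{k}}\subset\{B(n)\overset{\mathcal{L}_\alpha^{\leq L_0}}{\longleftrightarrow}\infty\}$. Thus
\[\mathbb{P}\bigl[B(n)\overset{\mathcal{L}_\alpha^{\leq L_0}}{\centernot\longleftrightarrow}\infty\bigr]\leq (1-p)^{|\mathcal{K}_n|}\leq \exp(-c\,n^{d-2})\]
for some $c=c(d,\alpha)>0$, valid for all $n\geq L_0 =: L_0(d,\alpha)$.

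\emph{Main obstacle.} The only delicate step is Step~1: passing from slab percolation on the quadrant slab (the form in which Theorem~\ref{thm: truncated loop percolation}~ii) is stated) to a uniform positive connection probability for every $\mathbb{Z}^2$-vertex of a two-sided slab. This is handled by the monotonicity of the loop soup under enlargement of the ambient vertex set together with translation invariance along the two free coordinate directions; no further new input is required.
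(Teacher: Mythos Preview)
Your proof is correct and follows essentially the same route as the paper's own argument: choose $L_0$ via Theorem~\ref{thm: truncated loop percolation}~ii) so that truncated loops percolate inside a two-dimensional slab, tile $B(n)$ by $\asymp (n/L_0)^{d-2}$ pairwise disjoint parallel slabs, and use independence of the Poisson loop soups restricted to disjoint slabs together with translation invariance to get the product bound $(1-p)^{c(n/L_0)^{d-2}}$. The only differences are cosmetic (you spell out the passage from the quadrant slab $\mathbb{Z}_+^2\times\{0,\ldots,L_0\}^{d-2}$ to the two-sided slab via monotonicity, and you fix a specific anchor vertex $v_{\vec{k}}$ in each slab), whereas the paper states the same steps more tersely.
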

\begin{lemma}\label{lem: local connectness}
 For $d\geq 3$, $\alpha>\alpha_c(d)$ and $b>1$, there exist $L_0=L_0(d,\alpha)$ and $c_0=c_0(d,\alpha,b)$ such that $\forall L\geq L_0$, $n\geq 1$ and $x,y\in B(n)$,
 \begin{equation}\label{eq: llc1}
  \mathbb{P}\left[x\overset{\mathcal{L}_{\alpha}^{\leq L}}{\longleftrightarrow}\partial B(bn),y\overset{\mathcal{L}_{\alpha}^{\leq L}}{\longleftrightarrow}\partial B(bn),x\overset{B(bn),\mathcal{L}_{\alpha}^{\leq L}}{\centernot{\longleftrightarrow}}y\right]\leq e^{-nc_0}.
 \end{equation}
\end{lemma}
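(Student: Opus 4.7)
The plan is to adapt the classical argument of \cite[Lemma~7.89]{GrimmettMR1707339} for Bernoulli percolation to the truncated loop setting, relying on three main inputs: Theorem~\ref{thm: truncated loop percolation} ii) for slab percolation via short loops, Corollary~\ref{cor: exponential tail} for the exponential tail of finite-cluster diameters, and the $L$-range independence enjoyed by $\mathcal{L}_\alpha^{\leq L}$ (events supported in two subsets at $\ell^\infty$-distance strictly greater than $L$ are independent, since no loop of length at most $L$ can meet both). The guiding idea is that on the event in \eqref{eq: llc1}, the $\mathcal{L}_\alpha^{\leq L}$-clusters of $x$ and $y$ inside $B(bn)$ are disjoint and each of $\ell^\infty$-diameter at least $(b-1)n$, which forces many rare ``avoidance'' events in a suitable slab tiling.

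Concretely, fix $L_0 = L_0(d,\alpha)$ and $m_0 = m_0(d,\alpha)$ by Theorem~\ref{thm: truncated loop percolation} ii) so that $\widetilde{\alpha}_c^{(2m_0)} < \alpha$. For $L \geq L_0$, partition $\mathbb{Z}^d$ into parallel thin slabs $S_i = \mathbb{Z}^{d-1} \times [iW, iW + m_0]$ with spacing $W = m_0 + L + 1$; consecutive slabs are then separated by more than $L$, so that the point processes $\{\ell \in \mathcal{L}_\alpha^{\leq L} : \ell \subset S_i\}_{i \in \mathbb{Z}}$ are independent. Inside each $S_i$, supercritical slab percolation by $\mathcal{L}_\alpha^{\leq 2m_0} \subseteq \mathcal{L}_\alpha^{\leq L}$ produces a unique infinite cluster $\mathcal{I}_i$. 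Assume the event in \eqref{eq: llc1} occurs, and write $\mathcal{C}_x, \mathcal{C}_y$ for the $\mathcal{L}_\alpha^{\leq L}$-clusters of $x, y$ in $B(bn)$. In any slab $S_i \cap B(bn)$ intersected by both clusters, at most one of $\mathcal{C}_x \cap S_i$ and $\mathcal{C}_y \cap S_i$ can be joined to $\mathcal{I}_i$ inside $B(bn)$, for otherwise $x$ and $y$ would be connected in $B(bn)$, contradicting the hypothesis. Since both clusters have diameter at least $(b-1)n$, they intersect $\Omega(n)$ common slabs, yielding $\Omega(n)$ independent ``bad slab'' events, each of which is a macroscopic finite excursion inside $S_i$ that fails to reach $\mathcal{I}_i$. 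A slab-intrinsic version of Corollary~\ref{cor: exponential tail}, obtained by rerunning the renormalization of Sections~3--5 inside $S_i$, bounds each such bad-slab probability by $C e^{-cn}$; combining via independence across slabs gives the desired $e^{-c_0 n}$ bound.

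The main obstacle is the uniformity of the exponent $c_0 = c_0(d,\alpha,b)$ in $L \geq L_0$: as $L$ grows the slab spacing grows, which could a priori dilute the exponential rate. This is resolved by constructing the slab skeleton $\{\mathcal{I}_i\}$ from the fixed subsample $\mathcal{L}_\alpha^{\leq 2m_0}$ (whose law is independent of $L$), so that the single-slab finite-cluster estimate involves only $L$-free constants; the additional loops in $\mathcal{L}_\alpha^{\leq L} \setminus \mathcal{L}_\alpha^{\leq 2m_0}$ add only open edges and hence can only decrease the probability in \eqref{eq: llc1}, so it suffices to carry out the analysis with the fixed-skeleton slab $\mathcal{I}_i$ playing the role of a benchmark. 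A secondary technical point is transferring the exponential decay of Corollary~\ref{cor: exponential tail} from $\mathbb{Z}^d$ to the $(d-1)$-dimensional slab: this is formally identical to the ambient argument once we observe that the intra-slab truncated loop soup is itself supercritical, and in particular inherits the sprinkling Lemma~\ref{lem: sprinkling} applied within $S_i$.
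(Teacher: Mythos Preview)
Your approach has two genuine gaps.

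First, the monotonicity claim you use to make the constant $c_0$ independent of $L$ is incorrect. The event in \eqref{eq: llc1} is an intersection of two increasing events ($x,y\overset{\mathcal{L}_\alpha^{\leq L}}{\longleftrightarrow}\partial B(bn)$) and one decreasing event ($x\overset{B(bn),\mathcal{L}_\alpha^{\leq L}}{\centernot\longleftrightarrow}y$); adding the loops in $\mathcal{L}_\alpha^{\leq L}\setminus\mathcal{L}_\alpha^{\leq 2m_0}$ does \emph{not} monotonically decrease its probability. So you cannot reduce the statement to a fixed $L$.

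Second, and more seriously, the slab argument does not close. If you space slabs by $W=m_0+L+1$ to obtain independence, the number of slabs crossed is $O(n/L)$ and the rate depends on $L$. If you fix the slab spacing independently of $L$ (using the $\mathcal{L}_\alpha^{\leq 2m_0}$ skeleton), then the ``bad--slab'' events are not independent, because they are phrased in terms of $\mathcal{C}_x,\mathcal{C}_y$, which are clusters of the full $\mathcal{L}_\alpha^{\leq L}$ and are correlated across slabs. Moreover, the step ``at most one of $\mathcal{C}_x\cap S_i$, $\mathcal{C}_y\cap S_i$ can be joined to $\mathcal{I}_i$ inside $B(bn)$'' is not justified: both can touch $\mathcal{I}_i$ at points of $B(bn)$ while the $\mathcal{I}_i$-path linking these points exits $B(bn)$, so no contradiction with $x\overset{B(bn)}{\centernot\longleftrightarrow}y$ arises.

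The paper proceeds differently and avoids all of these issues. It uses concentric shells $\partial B(N_j)$ with $N_j=n+jL_0$, defines the nested events
\[
A_j=\Big\{x\overset{\mathcal{L}_\alpha^{\leq L}}{\longleftrightarrow}\partial B(N_j),\ y\overset{\mathcal{L}_\alpha^{\leq L}}{\longleftrightarrow}\partial B(N_j),\ x\overset{B(N_j-1),\mathcal{L}_\alpha^{\leq L}}{\centernot\longleftrightarrow}y\Big\},
\]
and bounds $\mathbb{P}[A_{j+1}\mid A_j]$ by exploiting the independence between $(\mathcal{L}_\alpha^{\leq L})_{B(N_j-1)}$ and $(\mathcal{L}_\alpha^{\leq L})^{B(N_j-1)^c}$. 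Conditionally on $A_j$ one picks $x_j,y_j\in\partial B(N_j)$ connected to $x,y$ respectively, and shows via FKG and a cuboid argument (in the spirit of \cite[Lemma~7.78]{GrimmettMR1707339}) that with probability bounded below by some $p=p(d,\alpha)>0$, the fresh loops contained in $B(N_j-1)^c$ and of length $\leq L_0$ connect $x_j$ to $y_j$ inside the annulus. Since $L_0$ is fixed and only loops of length at most $L_0$ are used for this connection, $p$ does not depend on $L$; this gives $\mathbb{P}[A_{j+1}\mid A_j]\leq 1-p$ for $O(n)$ many $j$, hence \eqref{eq: llc1}.
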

We first verify Lemma~\ref{lem: connection from a big box} which asserts that a big box is necessarily connected to infinity in the supercritical regime.
\begin{proof}[Proof of Lemma~\ref{lem: connection from a big box}]
 By taking $L_0=L_0(d,\alpha)$ large enough, for all $\vec{K}=(K_3,\ldots,K_d)\in\mathbb{Z}^{d-2}$, there is a percolation inside a two dimensional slab
 \[Slab(\vec{K})=\mathbb{Z}^2\times\{K_3L_0,\ldots,(K_3+1)L_0-1\}\times\cdots\times \{K_dL_0,\ldots,(K_d+1)L_0-1\}\]
 of width $L_0$ by loops $(\mathcal{L}_{\alpha}^{\leq L_0})^{Slab(\vec{K})}$ inside the slab of length no more than $L_0$. Since
 \[\mathbb{P}\left[B(n)\overset{\mathcal{L}_{\alpha}^{\leq L_0}}{\longleftrightarrow}\infty\right]\geq \mathbb{P}\left[\exists \vec{K}\in\mathbb{Z}^{d-2}\text{ such that }B(n)\cap Slab(\vec{K})\overset{(\mathcal{L}_{\alpha}^{\leq L_0})^{Slab(\vec{K})}}{\longleftrightarrow}\infty\right],\]
 this lemma follows from the translation invariance and the independence between disjoint sets of loops inside disjoint slabs.
\end{proof}

Next, we prove Lemma~\ref{lem: local connectness} by following the same strategy in the proof \cite[Lemma 7.89]{GrimmettMR1707339} for Bernoulli percolations.

\begin{proof}[Proof of Lemma~\ref{lem: local connectness}]
 We follow the argument in \cite[Lemma~7.89]{GrimmettMR1707339}. Without loss of generality, we assume that $x,y\in\partial B(n)$. Let $L_0=L_0(d,\alpha)$ be a large enough number, which will be specified later in the proof. We take a sequence of numbers $N_j=n+jL_0$ for $j=0,\ldots,J_0$ where $J_0=\lfloor(b-1)n/L_0\rfloor$. Consider events $A_0\supset A_1\supset\cdots\supset A_{J_0}$, where
 \[A_j\overset{\text{def}}{=}\{x\overset{\mathcal{L}_{\alpha}^{\leq L}}{\longleftrightarrow}\partial B(N_j),y\overset{\mathcal{L}_{\alpha}^{\leq L}}{\longleftrightarrow}\partial B(N_j),x\overset{B(N_j-1),\mathcal{L}_{\alpha}^{\leq L}}{\centernot\longleftrightarrow}y\}.\]
 Then,
 \[\eqref{eq: llc1}\leq \mathbb{P}[A_j]=\mathbb{P}[A_0]\prod\limits_{j=0}^{J_0-1}\mathbb{P}[A_{j+1}|A_{j}]\leq \prod\limits_{j=0}^{J_0-1}\mathbb{P}[A_{j+1}|A_{j}].\]
 Note that $A_j$ is a measurable function of $(\mathcal{L}_{\alpha})_{B(N_j-1)}$ for each $j$. Also, on $A_j$, we can choose two vertices $x_j$ and $y_j$ according to lexicographic order such that $x_j,y_j\in\partial B(N_j)$, $x\overset{B(N_j),(\mathcal{L}_{\alpha}^{\leq L})_{B(N_j-1)}}{\longleftrightarrow}x_j$ and $y\overset{B(N_j),(\mathcal{L}_{\alpha}^{\leq L})_{B(N_j-1)}}{\longleftrightarrow}y_j$. Then, we bound the probability $\mathbb{P}[A_{j+1}|A_j]$ from above:
 \begin{align*}\mathbb{P}[A_{j+1}|A_j]\leq & \mathbb{P}\Big[x\overset{B(N_{j+1}-1),\mathcal{L}_{\alpha}^{\leq L}}{\centernot{\longleftrightarrow}}y\Big|A_j\Big]\\
 \leq &\frac{1}{\mathbb{P}[A_j]}\mathbb{P}\Big[x_j\overset{(\mathcal{L}_{\alpha}^{\leq L})^{B(N_j-1)^c}_{B(N_{j+1}-1)}}{\centernot{\longleftrightarrow}}y_j,A_j\Big]\\
 \leq &\sup\limits_{u,v\in\partial B(N_j)}\mathbb{P}\Big[u\overset{(\mathcal{L}_{\alpha}^{\leq L})^{B(N_j-1)^c}_{B(N_{j+1}-1)}}{\centernot{\longleftrightarrow}}v\Big],
 \end{align*}
 where we use the independence between $(\mathcal{L}_{\alpha}^{\leq L})_{B(N_j-1)}$ and $(\mathcal{L}_{\alpha}^{\leq L})^{B(N_j-1)^c}_{B(N_{j+1}-1)}$ in the last step. For \eqref{eq: llc1}, it suffices to have a uniform lower bound of $\mathbb{P}\Big[u\overset{(\mathcal{L}_{\alpha}^{\leq L})^{B(N_j-1)^c}_{B(N_{j+1})}}{\longleftrightarrow}v\Big]$ for $L_0$ large enough, $L\geq L_0$, $j=0,1,\ldots,J_0$ and $u,v\in\partial B(N_j)$. For this part, we basically follow \cite[Lemma~7.78]{GrimmettMR1707339}. By FKG inequality, it suffices to prove that $\exists L_0<\infty$ and $c=c(d,\alpha)$ such that for all $n\geq 1$,
 \begin{equation}\label{eq: lcc2}
  \mathbb{P}\left[(0,0,\ldots,0)\overset{(\mathcal{L}_{\alpha}^{\leq L_0})^{Q_n}}{\longleftrightarrow}(n,0,\ldots,0)\right]>c,
 \end{equation}
 where $Q_n$ denote the cuboid $\{0,\ldots,n\}^2\times\{0,\ldots,L_0-1\}^{d-2}$. By Theorem~\ref{thm: truncated loop percolation} and FKG inequality, we may take $L_0$ large enough such that for $\alpha>\alpha_c$,
 \begin{equation*}
  \mathbb{P}\Big[(0,\ldots,0)\overset{(\mathcal{L}_{\alpha'}^{\leq L_0})^{\mathbb{Z}_{+}^{2}\times\{0,\ldots,L_0-1\}^{d-2}}}{\longleftrightarrow}\infty\Big]>0,
 \end{equation*}
 where $\alpha'=\frac{\alpha+\alpha_c}{2}$. Then, by FKG inequality and definition of the loop soup, we have that
 \[\inf\limits_{n}\mathbb{P}\Big[(0,\ldots,0)\overset{(\mathcal{L}_{\alpha'}^{\leq L_0})^{Q_n}}{\longleftrightarrow}P(n)\Big]>0,\]
 where $P(m)\overset{\text{def}}{=}\{m\}\times\{0,\ldots,m\}\times\{0,\ldots,L_0-1\}^{d-2}$ for $m=0,\ldots,n$. By symmetry and FKG inequality, we also have that
 \[\inf\limits_{n}\mathbb{P}\Big[(0,0,\ldots,0)\overset{(\mathcal{L}_{\alpha'}^{\leq L_0})^{Q_n}}{\longleftrightarrow}P(n),(n,0,\ldots,0)\overset{(\mathcal{L}_{\alpha'}^{\leq L_0})^{Q_n}}{\longleftrightarrow}P(0)\Big]>0.\]
 When the event $(0,0,\ldots,0)\overset{(\mathcal{L}_{\alpha'}^{\leq L_0})^{Q_n}}{\longleftrightarrow}P(n),(n,0,\ldots,0)\overset{(\mathcal{L}_{\alpha'}^{\leq L_0})^{Q_n}}{\longleftrightarrow}P(0)$ occurs, there are two paths connecting $(0,\ldots,0)$ to $P(n)$ and $(n,0,\ldots,0)$ to $P(0)$. Necessarily, their projections on the first two coordinates intersect at some vertex. We choose the smallest such vertex $(w_1,w_2)$ according to the lexicographic order. Note that there exists $c=c(d,\alpha)>0$ such that
 \[\mathbb{P}[\text{All edges in }\{w_1,w_2\}\times\{0,\ldots,L_0-1\}^{d-2}\text{ are covered by some loop in }(\mathcal{L}_{\alpha-\alpha'}^{\leq L_0})^{Q_n}]>c.\]
 Finally, \eqref{eq: lcc2} follows by the independence and stationarity of $\alpha\rightarrow(\mathcal{L}_{\alpha}^{\leq L_0})^{Q_n}$.
\end{proof}

\subsection{Proof of Lemma~\ref{lem: big loop plus small finite cluster}}\label{sect: proof of lemma big loop}
We give the proof of Lemma~\ref{lem: big loop plus small finite cluster} in the present section. It is an estimate of the finite clusters attached to some big loops within a large box. We first give an estimate of the finite clusters in Lemma~\ref{lem: mean cluster size bound} as a corollary of Corollary~\ref{cor: exponential tail}. Next, we give an upper bound of the vertices covered by the big loops in Lemma~\ref{lem: vertices visited by large loops}. Finally, we prove Lemma~\ref{lem: big loop plus small finite cluster} by combining them together.

As an immediate consequence of Corollary~\ref{cor: exponential tail}, we have 
\begin{lemma}\label{lem: mean cluster size bound}
 For $d\geq 3$ and $\alpha>\alpha_c(d)$, there exist $L_0=L_0(d,\alpha)$ and $C=C(d,\alpha)<\infty$ such that $\forall L\geq L_0$,
 \begin{equation}
  \mathbb{E}\left[\#\mathcal{C}_{\alpha}(0),\#\mathcal{C}_{\alpha}(0)<\infty\right]<C(d,\alpha).
 \end{equation}
\end{lemma}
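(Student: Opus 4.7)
The plan is to reduce this to Corollary~\ref{cor: exponential tail} by an entirely routine annulus decomposition. Reading the statement with $\mathcal{C}_\alpha(0)$ replaced by the truncated cluster $\mathcal{C}_\alpha^{\leq L}(0)$ (otherwise the quantifier $\forall L\geq L_0$ would be vacuous), the point is that in the supercritical regime the one-arm connectivity restricted to finite clusters decays exponentially, and the volume of a finite cluster of diameter $n$ grows only polynomially.

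First, I would choose $L_0=L_0(d,\alpha)$ together with constants $c=c(d,\alpha)>0$ and $C=C(d,\alpha)<\infty$ given by Corollary~\ref{cor: exponential tail} (with $\kappa=0$), so that for every $L\geq L_0$ and every $n\geq 1$,
\begin{equation*}
\mathbb{P}\bigl[\#\mathcal{C}_\alpha^{\leq L}(0)<\infty,\ \mathcal{C}_\alpha^{\leq L}(0)\cap\partial B(n)\neq\emptyset\bigr]\leq C e^{-cn}.
\end{equation*}

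Second, I would apply Fubini and decompose by $\ell^\infty$-annuli:
\begin{equation*}
\mathbb{E}\bigl[\#\mathcal{C}_\alpha^{\leq L}(0),\,\#\mathcal{C}_\alpha^{\leq L}(0)<\infty\bigr]
=\sum_{x\in\mathbb{Z}^d}\mathbb{P}\bigl[x\in\mathcal{C}_\alpha^{\leq L}(0),\,\#\mathcal{C}_\alpha^{\leq L}(0)<\infty\bigr].
\end{equation*}
For any $x$ with $|x|_\infty=n\geq 1$, the event $\{x\in\mathcal{C}_\alpha^{\leq L}(0)\}$ implies $\{\mathcal{C}_\alpha^{\leq L}(0)\cap \partial B(n)\neq\emptyset\}$, so by the previous display each $x$-term is at most $C e^{-cn}$.

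Third, I would sum by annuli using $\#\partial B(n)\leq C_d\, n^{d-1}$: the contribution from $x=0$ is $1$, and
\begin{equation*}
\sum_{n\geq 1}\#\partial B(n)\cdot C e^{-cn}\leq C\cdot C_d\sum_{n\geq 1} n^{d-1}e^{-cn}<\infty,
\end{equation*}
which is a constant depending only on $d$ and $\alpha$, yielding the desired bound $C(d,\alpha)$. I do not foresee any genuine obstacle: the only nontrivial ingredient is Corollary~\ref{cor: exponential tail} itself, which has already been established, and what remains is the standard ``finite expected finite-cluster size'' computation that Aizenman--Barsky-type arguments produce once one has exponential decay of the one-arm event on finite clusters.
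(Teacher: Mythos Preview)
Your proposal is correct and matches the paper's approach: the paper simply states that the lemma is ``an immediate consequence of Corollary~\ref{cor: exponential tail}'' without spelling out any details, and your annulus decomposition via Fubini is exactly the standard way to make that immediacy explicit. Your observation that the statement is intended for the truncated cluster $\mathcal{C}_\alpha^{\leq L}(0)$ is also correct, as confirmed by the way the lemma is applied later in the proof of Lemma~\ref{lem: big loop plus small finite cluster}.
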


\begin{lemma}\label{lem: vertices visited by large loops}
 For $d\geq 3$ and $\alpha>0$, for all $\epsilon>0$, there exist $L_0=L_0(d,\alpha,\epsilon)<\infty$, $C=C(d,\alpha,\epsilon)<\infty$ and $c=c(d,\alpha,\epsilon)>0$ such that for $L\geq L_0$ and for all $n\geq 1$, 
 \begin{equation}\label{eq: lvvbll1}
  \mathbb{P}\left[\sum\limits_{x\in B(n)}1_{\{\exists\ell\in\mathcal{L}_{\alpha}:x\in\ell,|\ell|\geq L\}}\geq \epsilon n^d\right]\leq Ce^{-cn^{d-2}}.
 \end{equation}
\end{lemma}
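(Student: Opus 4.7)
The plan is to combine a mean‐value estimate (making $L$ large via the local central limit theorem) with an exponential concentration bound obtained from Campbell's formula for the Poisson loop ensemble.

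Let $N$ denote the count on the left of \eqref{eq: lvvbll1}, and introduce the obvious upper bound $N':=\sum_{\ell\in\mathcal{L}_{\alpha},\,|\ell|\geq L}|\ell\cap B(n)|$, so that $N\leq N'$. By translation invariance of $\mu$ and Fubini,
\[
\mathbb{E}[N']\;=\;\alpha\sum_{x\in B(n)}\mu(\ell\ni x,\,|\ell|\geq L)\;=\;\alpha(2\lfloor n\rfloor+1)^d\,\mu(\ell\ni 0,\,|\ell|\geq L).
\]
The local CLT gives $p_k(0,0)\leq C_d k^{-d/2}$, hence $\mu(\ell\ni 0,\,|\ell|\geq L)\leq\sum_{k\geq L}k^{-1}p_k(0,0)\leq C'_d L^{-d/2}$, which tends to $0$ as $L\to\infty$ since $d\geq 3$. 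Thus for some $L_0=L_0(d,\alpha,\epsilon)$ and every $L\geq L_0$, one has $\mathbb{E}[N']\leq\tfrac{\epsilon}{2}(2\lfloor n\rfloor+1)^d$.

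For the concentration step I would use Markov's inequality together with Campbell's formula for the Poisson loop soup,
\[
\log\mathbb{E}[e^{\lambda N'}]\;=\;\alpha\!\int_{|\ell|\geq L}\!\bigl(e^{\lambda|\ell\cap B(n)|}-1\bigr)\,d\mu(\ell),
\]
splitting the integration at a threshold $M=M(n,\epsilon)$. Loops with $|\ell|>M$ are treated by a union bound using $\alpha\mu(\ell\cap B(n)\neq\emptyset,\,|\ell|>M)\leq Cn^d M^{-d/2}$, obtained by summing the single-point hitting measure $\mu(x\in\ell,\,|\ell|>M)\leq CM^{-d/2}$ over $x\in B(n)$; loops with $|\ell|\in[L,M]$ satisfy $|\ell\cap B(n)|\leq M$, so the elementary inequality $e^y-1\leq y e^y$ reduces the exponential moment to a first–moment quantity multiplied by $e^{\lambda M}$, which together with the mean bound above yields a Chernoff-type inequality once $\lambda$ and $M$ are balanced against each other.

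The main obstacle is matching the precise rate $e^{-cn^{d-2}}$: the elementary Markov/Chernoff trade–off only delivers polynomial decay in $n$. The exponent $n^{d-2}$ corresponds to the capacity $\mathrm{cap}(B(n))\asymp n^{d-2}$, so one must replace the crude estimate $\mu(\ell\cap B(n)\neq\emptyset,\,|\ell|\geq L)\leq n^d c_L$ by a sharper, capacity–type bound. A natural route is to decompose each long loop into successive excursions between $\partial B(n)$ and $\partial B(2n)$ (invoking Lemma~\ref{lem: mu annulus} and the strong Markov property), so that the average number of excursions into $B(n)$ per trajectory is $O(1)$ while the intensity of loops that actually enter $B(n)$ is governed by $\mathrm{cap}(B(n))$ rather than by the volume $n^d$. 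Plugging this refinement into the Chernoff analysis and using the Poisson structure of the excursion soup then produces the advertised exponential rate with $n^{d-2}$ in the exponent.
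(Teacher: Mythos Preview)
Your setup via $N'\le N$ and Campbell's formula is sound, and the first-moment bound is correct. The genuine gap is in the concentration step, and your proposed fix does not close it. The assertion that ``the intensity of loops that actually enter $B(n)$ is governed by $\mathrm{cap}(B(n))$'' is not correct for the loop soup: even restricting to $|\ell|\ge L$, one has $\mu(\ell\cap B(n)\neq\emptyset,\,|\ell|\ge L)\asymp n^d L^{-d/2}$ (your own union bound), not $n^{d-2}$; there is no analogue of the interlacement intensity formula here. The excursion decomposition you sketch does not produce an independent Poisson soup of excursions either, since excursions of a single loop are constrained to close up, so ``the Poisson structure of the excursion soup'' is not available as stated. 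As written, the argument therefore stops at the polynomial rate you yourself flagged.

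The mechanism that actually yields the exponent $n^{d-2}$ is different from capacity and is the one used in the paper: one rebases each loop intersecting $B(n)$ at a point of $B(n)$ (so the number $M_n$ of base points is Poisson with mean $O(n^d)$), and then controls, \emph{per loop}, the exponential moment of its occupation time in $B(n)$ after scaling by $n^{-2}$. The reason for the $n^{-2}$ is that the Green function of SRW on $B(n)$ has $\sup_x\sum_{y\in B(n)}G(x,y)=O(n^2)$; by Khas'minskii's lemma this makes $\mathbb{E}\bigl[\exp\{\lambda n^{-2}\sum_k 1_{X_k\in B(n)}\}\bigr]$ uniformly bounded for small $\lambda$, hence $\mathbb{E}[e^{\lambda Y_i/n^2}-1]\le C\lambda/(n^2\log L)$ uniformly in $i$. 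Multiplying over $M_n\le Cn^d$ base points and applying Markov to $\sum_i Y_i/n^2\ge \epsilon n^{d-2}$ gives the rate $e^{-cn^{d-2}}$. In short, the correct trade-off is ``$n^d$ loops, each with $O(n^2)$ occupation'', not ``$n^{d-2}$ loops''. Replacing your excursion heuristic by this Green-function/Khas'minskii argument would complete the proof.
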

\begin{remark}
 We see that the exponent $n^{d-2}$ is optimal for small enough $\epsilon$ by considering the probability that there exist $O(n^{d-2})$ many loops of diameter $n$ intersecting $B(n)$. Each such loop appears independently with a positive probability bounded from below. By Paley-Zygmund inequality, those loops occupy $O(n^d)$ of vertices inside $B(n)$.
 
 Besides, by using Hoeffding's inequality for martingales (see \cite{FreedmanMR0380971} or \cite[Equations (10) and (15)]{FanGramaLiuMR2956116}), one can show that for $d\geq 3$ and $\alpha>0$, for all $t>0$, there exist $C=C(d,\alpha)<\infty$ and $c=c(d,\alpha)>0$ such that for all intensity function $\beta$ bounded by $\alpha$ (i.e. $\sup\limits_{v}|\beta(v)|\leq\alpha$) and for all $n\geq 1$, 
 \begin{equation}\label{eq: lvvbll1a}
  \mathbb{P}\left[\left|N(\mathcal{L}_{\beta},B(n))-\mathbb{E}[N(\mathcal{L}_{\beta},B(n))]\right|\geq t\right]\leq Ce^{-cf(d,n,t)},
 \end{equation}
 where $N(\mathcal{L}_{\beta},B(n))\overset{\text{def}}{=}\sum\limits_{x\in B(n)}1_{\{\exists\ell\in\mathcal{L}_{\beta}:x\in\ell\}}$ and $f(d,n,t)=\frac{t}{n^2}1_{d=3}+\frac{t^2}{n^4\log n}1_{d=4}+\min(\frac{t^2}{n^d},\frac{t}{n^2})1_{d\geq 5}$. To get \eqref{eq: lvvbll1} from \eqref{eq: lvvbll1a}, we take $\mathcal{L}_{\beta}=\{\ell\in\mathcal{L}_{\alpha}:|\ell|\geq L\}$. As we do not need \eqref{eq: lvvbll1a}, we omit the proof.
\end{remark}

\begin{proof}
 Consider another based loop measure $\widetilde{\dot{\mu}}=\widetilde{\dot{\mu}}_n$ on the based loops visiting $B(n)$ as follows: for $\dot{\ell}=(x_1,\ldots,x_k)$ ($k\geq 2$),
 \begin{equation}\label{eq: lvvbll2}
  \widetilde{\dot{\mu}}(\dot{\ell})=\frac{1}{N(\dot{\ell},B(n))}1_{\{x_1\in B(n)\}}Q^{x_1}_{x_2}\ldots Q^{x_{k-1}}_{x_k}Q^{x_k}_{x_1},
 \end{equation}
 where $N(\dot{\ell},B(n))\overset{\text{def}}{=}\sum\limits_{i=1}^{k}1_{\{x_i\in B(n)\}}$. Then, $\widetilde{\dot{\mu}}$ and $\dot{\mu}$ induces the same loop measure on the loops visiting $B(n)$. Let $\dot{\mathcal{B}}_{\alpha}=\{(t,\Base(\dot{\ell})):(t,\dot{\ell})\in\widetilde{\dot{\mathcal{LP}}},t\leq\alpha\}$. Then, $\dot{\mathcal{B}}_{\alpha}$ is a Poisson point process on $[0,\alpha]\times B(n)$ of the intensity measure $\Leb([0,\alpha])\otimes\left(\sum\limits_{y\in B(n)}\sum\limits_{\Base(\dot{\ell})=y}\widetilde{\dot{\mu}}(\dot{\ell})\delta_y\right)$. To recover $\widetilde{\dot{\mathcal{LP}}}$, conditionally on $\dot{\mathcal{B}}_{\alpha}$, we sample independently at each couple $(t,y)\in\dot{\mathcal{B}}_{\alpha}$ a loop based at $y$ according to the following family of probability measures $\left(\widetilde{\dot{P}}_y\right)_{y\in B(n)}$:
 \begin{equation}\label{eq: lvvbll3}
  \widetilde{\dot{P}}_y(\dot{\ell})=\frac{1_{\{\Base(\dot{\ell})=y\}}\widetilde{\dot{\mu}}(\dot{\ell})}{\sum\limits_{\Base(\dot{\ell})=y}\widetilde{\dot{\mu}}(\dot{\ell})}\text{ for }y\in B(n).
 \end{equation}
 We list $\dot{\mathcal{B}}_{\alpha}$ in the increasing order of $t$: $\dot{\mathcal{B}}_{\alpha}=\{(t_1,\Base(\dot{\ell}_1)),\ldots,(t_{M_n},\Base(\dot{\ell}_{M_n}))\}$. (The way of ordering is of little importance.) For $i=1,\ldots,M_n$, we define
 \[Y_{i}=1_{\{|\dot{\ell}_i|>L\}}\cdot\#\{x\in B(n):x\in \dot{\ell}_i\}.\]
 To prove \eqref{eq: lvvbll1}, it suffices to prove that $\forall\epsilon>0$, there exist $L_0=L_0(d,\alpha,\epsilon)<\infty$, $C=C(d,\alpha,\epsilon)<\infty$ and $c=c(d,\alpha,\epsilon)>0$ such that for $L\geq L_0$ and for all $n\geq 1$, 
 \begin{equation}\label{eq: lvvbll4}
  \mathbb{P}\left[\sum\limits_{i=1}^{M_n}Y_i\geq \epsilon n^d\right]\leq Ce^{-cn^{d-2}}.
 \end{equation}
 Note that $M_n$ is a Poisson variable of expectation $\alpha\sum\limits_{y\in B(n)}\sum\limits_{\Base(\dot{\ell})=y}\widetilde{\dot{\mu}}(\dot{\ell})$, which is bounded by $\alpha C(d)n^d$ when $d\geq 3$. Hence, there exist $C=C(d,\alpha)<\infty$ and $c=c(d,\alpha)>0$ such that
 \begin{equation}\label{eq: lvvbll5}
  \mathbb{P}[M_n>Cn^d]\leq e^{-cn^d} \quad,\forall n\geq 1.
 \end{equation}
 Next, by \eqref{eq: lvvbll2} and \eqref{eq: lvvbll3}, noting that $\exists C=C(d)<\infty$ such that $\sum\limits_{\Base(\dot{\ell})=y}\widetilde{\dot{\mu}}(\dot{\ell})<\infty$, we get that
 \[\sup\limits_{i}\mathbb{E}[e^{\lambda Y_i/n^2}-1|\dot{\mathcal{B}}_{\alpha}]\leq C\max\limits_{x\in B(n)}\sum\limits_{k>L}\sum\limits_{x_2,\ldots,x_k}Q^{x}_{x_2}\cdots Q^{x_{k-1}}_{x_k}Q^{x_k}_{x}\frac{\exp\{\lambda N(\dot{\ell},B(n))/n^2\}-1}{N(\dot{\ell},B(n))}.\]
 Note that
 \begin{align*}
  \frac{\exp\{\lambda N(\dot{\ell},B(n))/n^2\}-1}{N(\dot{\ell},B(n))}\leq & 1_{\{N(\dot{\ell},B(n))\leq \frac{\log L}{3\lambda}n^2\}}\frac{\lambda}{n^2}L^{1/3}\\
  &+1_{\{N(\dot{\ell},B(n))>\frac{\log L}{3\lambda}n^2\}}\frac{3\lambda}{n^2\log L}\exp\{\lambda N(\dot{\ell},B(n))/n^2\}.
 \end{align*}
 Thus, for $d\geq 3$, there exists $C=C(d)<\infty$ such that
 \begin{align*}
  \sup\limits_{i}\mathbb{E}[e^{\lambda Y_i/n^2}-1|\dot{\mathcal{B}}_{\alpha}]
  \leq &C\frac{\lambda}{n^2}L^{1/3}\max\limits_{x\in B(n)}\sum\limits_{k>L}\sum\limits_{x_2,\ldots,x_k}Q^{x}_{x_2}\cdots Q^{x_{k-1}}_{x_k}Q^{x_k}_{x}\\
  &+C\frac{\lambda}{n^2\log L}\max\limits_{x\in B(n)}\sum\limits_{k}\sum\limits_{x_2,\ldots,x_k}Q^{x}_{x_2}\cdots Q^{x_{k-1}}_{x_k}Q^{x_k}_{x}e^{\lambda N(\dot{\ell},B(n))/n^2}\\
  \leq & C\frac{\lambda}{n^2\log L}\left(1+\max\limits_{x\in B(n)}\sum\limits_{k}\sum\limits_{x_2,\ldots,x_k}Q^{x}_{x_2}\cdots Q^{x_{k-1}}_{x_k}Q^{x_k}_{x}e^{\lambda N(\dot{\ell},B(n))/n^2}\right).
 \end{align*}
 If $(X_k)_k$ is a simple random walk on $\mathbb{Z}^d$ starting from $x$, then $\exists C=C(d)<\infty$ such that
 \[\sum\limits_{k}\sum\limits_{x_2,\ldots,x_k}Q^{x}_{x_2}\cdots Q^{x_{k-1}}_{x_k}Q^{x_k}_{x}e^{\lambda N(\dot{\ell},B(n))/n^2}\leq C\mathbb{E}^{x}\left[\exp\left\{\frac{\lambda}{n^2}\sum\limits_{k=0}^{\infty}1_{\{X_k\in B(n)\}}\right\}\right].\]
 From an estimation of Green function for $d\geq 3$, we deduce that $\exists C=C(d)<\infty$ such that for $\lambda$ small enough,
 \[\mathbb{E}^{x}\left[\frac{\lambda}{n^2}\sum\limits_{k=0}^{\infty}1_{\{X_k\in B(n)\}}\right]\leq C\lambda<1/2,\]
 when $\lambda$ is small enough. By Khas'minskii's lemma (see e.g. \cite[Lemma~3.7]{ChungZhaoMR1329992}), there exists $C=C(d)<\infty$ such that
 \[\mathbb{E}\left[\exp\left\{\frac{\lambda}{n^2}\sum\limits_{k=0}^{\infty}1_{\{X_k\in B(n)\}}\right\}\right]\leq C\lambda.\]
 Hence, there exists $C=C(d)<\infty$ such that
 \begin{equation}\label{eq: lvvbll6}
  \sup\limits_{i}\mathbb{E}[e^{\lambda Y_i/n^2}-1|\dot{\mathcal{B}}_{\alpha}]
  \leq C\frac{\lambda}{n^2\log L}
 \end{equation}
 and \eqref{eq: lvvbll4} follows from \eqref{eq: lvvbll5} and \eqref{eq: lvvbll6}.
\end{proof}

We are ready for the proof of Lemma~\ref{lem: big loop plus small finite cluster}. Note that
\[\left\{x\in B(n) : x\overset{\mathcal{L}_{\alpha}^{\leq L}}{\centernot{\longleftrightarrow}}\infty,\exists \ell\in\mathcal{L}_{\alpha}^{>L}\text{ such that }x\overset{\mathcal{L}_{\alpha}^{\leq L}}{\longleftrightarrow}\ell\right\}\]
 is non-increasing in $L$. Hence, the constants $c,C$ are independent of $L$ and it suffices to prove \eqref{eq: lblpsfc1} for some $L$ large enough. We denote by $S$ the variable in \eqref{eq: lblpsfc1}
\[\#\left\{x\in B(n) : x\overset{\mathcal{L}_{\alpha}^{\leq L}}{\centernot{\longleftrightarrow}}\infty,\exists \ell\in\mathcal{L}_{\alpha}^{>L}\text{ such that }x\overset{\mathcal{L}_{\alpha}^{\leq L}}{\longleftrightarrow}\ell\right\}.\]
 Set $m=\lceil n^{\frac{d}{d+1}}\rceil$. Let $J=\{\vec{j}\in\mathbb{Z}^d: B^{(m)}(\vec{j})\cap B(n)\neq\emptyset\}$. For $r\geq 0$, we denote by $U_{m}^{r}(\vec{j})$ the $r$-neighborhood of $B^{(m)}(\vec{j})$ with respect to $||\cdot||_{\infty}$ distance:
 \[U_{m}^{r}(\vec{j})=\{j_1m-r,\ldots,(j_1+1)m_n+r-1\}\times\cdots\times\{j_dm-r,\ldots,(j_d+1)m+r-1\}.\]
 We define
 \[S(\vec{j})=\#\{x\in B^{(m)}(\vec{j}):x\overset{(\mathcal{L}_{\alpha}^{\leq L})^{U_{m}^{2m}(\vec{j})}}{\centernot{\longleftrightarrow}}\partial U_{m}^{m}(\vec{j}),\exists \ell\in\mathcal{L}_{\alpha}^{>L}\text{ such that }x\overset{(\mathcal{L}_{\alpha}^{\leq L})^{U_{m}^{2m}(\vec{j})}}{\longleftrightarrow}\ell \}.\]
 By taking $n$ large enough, we assume that $m\geq L$.
 When each finite cluster formed by $\mathcal{L}_{\alpha}^{\leq L}$ has diameter at most $n^{\frac{d}{d+1}}$, we have that $S\leq\sum\limits_{\vec{j}\in J}S(\vec{j})$. By Corollary~\ref{cor: exponential tail}, such event occurs with high probability: there exist $L_0=L_0(d,\alpha)<\infty$, $C=C(d,\alpha)<\infty$ and $c=c(d,\alpha)>0$ such that for $L\geq L_0$,
 \[\mathbb{P}[\exists x\in B(n),y\in\mathbb{Z}^d:||x-y||_{\infty}\geq n^{\frac{d}{d+1}},x\overset{\mathcal{L}_{\alpha}^{\leq L}}{\longleftrightarrow}y,x\overset{\mathcal{L}_{\alpha}^{\leq L}}{\centernot{\longleftrightarrow}}\infty]\leq Cn^de^{-cn^{\frac{d}{d+1}}}.\]
 We group the summation into $3^d$ summations as follows: $\sum\limits_{\vec{j}}S(\vec{j})=\sum\limits_{\vec{k}\in\{0,1,2\}^d}S^{(\vec{k})}$ where $S^{(\vec{k})}=\sum\limits_{\vec{j}:3\vec{j}+\vec{k}\in J}S(3\vec{j}+\vec{k})$. By union bounds, it suffices to prove that for each $\vec{k}$, $\forall\epsilon>0$, there exist $L_0=L_0(d,\alpha,\epsilon)$, $c=c(d,\alpha,\epsilon,L)>0$ and $C=C(d,\alpha,\epsilon,L)<\infty$ such that $\forall L\geq L_0$,
 \begin{equation}
  \mathbb{P}[S^{(\vec{k})}>\epsilon n^d]\leq C\exp(-cn^{\frac{d}{d+1}}).
 \end{equation}
 The dependences of $c$ and $C$ on $L$ is due to the assumption that $m=\lceil n^{\frac{d}{d+1}}\rceil\geq L$. Note that for each $\vec{k}$, conditionally on $\mathcal{L}_{\alpha}^{>L}$, the variables $(S(3\vec{j}+\vec{k}))_{\vec{j}}$ are independent since they dependent on disjoint loop ensembles within a Poisson point process of loops. Also, note that $\#J\leq 2^d(n/m+1)^{d}$ and that $S(\vec{j})\leq m^{d}$. By Hoeffding's inequality, $\exists c=c(d,\epsilon)$ such that
 \[\mathbb{P}\left[S^{(\vec{k})}-\mathbb{E}\big[S^{(\vec{k})}|\mathcal{L}_{\alpha}^{>L}\big]>\epsilon n^d\Big|\mathcal{L}_{\alpha}^{>L}\right]\leq \exp(-cn^{\frac{d}{d+1}}).\]
 By the independence between $\mathcal{L}_{\alpha}^{>L}$ and $\mathcal{L}_{\alpha}^{\leq L}$, by Lemma~\ref{lem: mean cluster size bound}, for $m>L$, $\exists C=C(d,\alpha)<\infty$ such that
 \[\mathbb{E}\big[S^{(\vec{k})}|\mathcal{L}_{\alpha}^{>L}\big]\leq C\sum\limits_{y\in B(n+m)}1_{\{\exists\ell\in\mathcal{L}_{\alpha}^{>L}:y\in\ell\}}.\]
 Finally, the result follows from Lemma~\ref{lem: vertices visited by large loops}.

\subsection{Proof of Lemma~\ref{lem: proba HKaL}}\label{sect: proof of lem proba HKaL}
We take $L$ large enough such that the truncated model $\mathcal{L}_{\alpha}^{\leq L}$ percolates and Corollary~\ref{cor: exponential tail}, Lemma~\ref{lem: big loop plus small finite cluster} and Lemma~\ref{lem: local connectness} are applicable.

By the definition of $\mathcal{H}_{K,s}^{\alpha,L}$, we bound the complement of $\mathcal{H}_{K,s}^{\alpha,L}$ as follows:
\begin{align*}
 \mathbb{P}\left[\left(\mathcal{H}_{K,s}^{\alpha,L}\right)^c\right]\leq &\mathbb{P}\left[\cup_{x\in [-2,K+2)^d}\{x\text{ is }s\text{-bad w.r.t. }\mathcal{L}_{\alpha}^{\leq L}\}\right]\\
 &+\mathbb{P}\left[
 \exists x,y\in\mathcal{S}_{L_s}\cap [0,KL_s)^d\text{ with }||x-y||_{\infty}\leq L_s\text{ such that }x\overset{B(x,2L_s),\mathcal{L}_{\alpha}}{\centernot{\longleftrightarrow}}y\right]\\
 &+\mathbb{P}\left[\begin{array}{l}
                    \exists \vec{k}\in\{-2,\ldots,K+1\}^d\text{ such that }\\
                    \#\big\{x\in B^{(L_s)}(\vec{k}) : x\overset{\mathcal{L}_{\alpha}^{\leq L}}{\centernot{\longleftrightarrow}}\infty,\exists \ell\in\mathcal{L}_{\alpha}^{>L}\text{ such that }x\overset{\mathcal{L}_{\alpha}^{\leq L}}{\longleftrightarrow}\ell\big\}\geq \frac{\theta(\alpha)}{100} L_s^d
                   \end{array}\right].
\end{align*}
By taking $\epsilon_P=\infty$ in \cite[Lemma~3.2]{Sapozhnikov2014}, there exist $C(d,\alpha,L)<\infty$ and $C'=C'(d,\alpha,l_0,L)$ such that for $r_0\geq C$, $l_0\geq Cr_0$ and $L_0>C'$, we have that
 \[\mathbb{P}\left[\cup_{x\in [-2,K+2)^d}\{x\text{ is }s\text{-bad w.r.t. }\mathcal{L}_{\alpha}^{\leq L}\}\right]\leq 2(K+4)^d2^{-2^s}.\]
By Lemma~\ref{lem: big loop plus small finite cluster}, there exist $c=c(d,\alpha)>0$ and $C=C(d,\alpha)<\infty$ such that
\begin{multline*}
\mathbb{P}\left[\begin{array}{l}
                    \exists \vec{k}\in\{-2,\ldots,K+1\}^d\text{ such that }\\
                    \#\big\{x\in B^{(L_s)}(\vec{k}) : x\overset{\mathcal{L}_{\alpha}^{\leq L}}{\centernot{\longleftrightarrow}}\infty,\exists \ell\in\mathcal{L}_{\alpha}^{>L}\text{ such that }x\overset{\mathcal{L}_{\alpha}^{\leq L}}{\longleftrightarrow}\ell\big\}\geq \frac{\theta(\alpha)}{100} L_s^d
                   \end{array}\right]\\
\leq C(K+4)^d\exp\left(-cL_s^{\frac{d}{d+1}}\right).
\end{multline*}
By Corollary~\ref{cor: exponential tail}, there exist $c=c(d,\alpha)>0$ and $C=C(d,\alpha)<\infty$ such that
 \[\mathbb{P}\left[\begin{array}{l}
 \exists x,y\in\mathcal{S}_{L_s}\cap [0,KL_s)^d\text{ with }||x-y||_{\infty}\leq L_s,\\
 \text{ such that }x\notin\mathcal{S}_{\infty}\text{ or }y\notin\mathcal{S}_{\infty}
 \end{array}\right]\leq C(KL_s)^dL_s^d e^{-cL_s},\]
 where $(KL_s)^dL_s^d$ comes from the combinatorial complexity of $x$ and $y$. By Lemma~\ref{lem: local connectness}, there exist $c=c(d,\alpha)>0$ and $C=C(d,\alpha)<\infty$ such that
 \[\mathbb{P}\left[\begin{array}{l}
 \exists x,y\in\mathcal{S}_{\infty}\cap [0,KL_s)^d\text{ with }||x-y||_{\infty}\leq L_s\\
 \text{such that }x\overset{B(x,2L_s),\mathcal{L}_{\alpha}}{\centernot{\longleftrightarrow}}y
 \end{array}\right]\leq C(KL_s)^dL_s^de^{-cL_s}.\]
Hence,
 \[\mathbb{P}\left[\begin{array}{l}
 \exists x,y\in\mathcal{S}_{L_s}\cap [0,KL_s)^d\text{ with }||x-y||_{\infty}\leq L_s\\
 \text{such that }x\overset{B(x,2L_s),\mathcal{L}_{\alpha}}{\centernot{\longleftrightarrow}}y
 \end{array}\right]\leq C(KL_s)^dL_s^de^{-cL_s}\]
Therefore, Lemma~\ref{lem: proba HKaL} follows.

\subsection{Proof of Lemma~\ref{lem: isoperimetric}}\label{sect: proof of lem isoperimetric}
 By translation invariance of $\mathcal{L}_{\alpha}$, we give the proof for $x_s=0$. Choose $L$ large enough such that $\theta^{(L)}(\alpha)>\frac{99}{100}\theta(\alpha)$. Assume $\mathcal{H}_{K,s}^{\alpha,L}$ occurs. We state a key lemma on $\mathcal{Q}_{K,s,0}^{\leq L}(x_s)$ which will be used to deduce Lemma~\ref{lem: isoperimetric}.
 \begin{lemma}\label{lem: perforated}[Weaker version of \cite[Lemma~2.4,Corollary~2.5]{Sapozhnikov2014}]
 Suppose that all the vertices in $\mathbb{G}_{s}\cap [-2L_s,(K+2)L_s)^d$ are $s$-good w.r.t. $\mathcal{L}_{\alpha}^{\leq L}$ for $L<\infty$. Then, for $z,z'\in\mathbb{G}_{s}\cap [-2L_s,(K+2)L_s)^d$ such that $|z-z'|_1=L_s$, we have that
 \begin{equation}\label{eq: perforated volume lower bound}
  \mathcal{Q}_{K,s,0}^{\leq L}\cap (z+[0,L_s)^d)\geq \left(\frac{L_s}{L_0}\right)^d\prod_{i=1}^{\infty}\left(1-\left(\frac{4r_i}{l_i}\right)^d\right)\geq \left(\frac{L_s}{L_0}\right)^d(1-10^{-12}\theta^{(L)}(\alpha))
 \end{equation}
 and $\mathcal{Q}_{K,s,0}^{\leq L}\cap ((z+[0,L_s)^d)\cup(z'+[0,L_s)^d))$ is a connected set in $\mathbb{G}_0$. There exists $c=c(d,\alpha,L)>0$ such that for $A\subset \mathcal{Q}_{K,s,0}^{\leq L}$ with $\min(c(K+4)^d(L_s/L_0)^d,(L_s/L_0)^{d^2})\leq \#A\leq\frac{1}{2}\cdot\#\mathcal{Q}_{K,s,0}^{\leq L}$, we have that
 \[\partial_{\mathcal{Q}_{K,s,0}^{\leq L}}A\geq c\cdot\#A^{\frac{d-1}{d}}.\]
 \end{lemma}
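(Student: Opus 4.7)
The plan is to follow the multi-scale renormalization strategy of \cite[Lemma~2.4, Corollary~2.5]{Sapozhnikov2014}, while exploiting our simplified definition of $(n+1)$-goodness (which forbids any two $n$-bad vertices at $\ell^{\infty}$-distance $\geq r_n L_n$) to shorten the proof. Let $\mathcal{Q}_{K,s,n}^{\leq L}$ denote the intermediate perforated set obtained by removing from $\mathbb{G}_0 \cap (x_s + [-2L_s, (K+2)L_s)^d)$ only the bad regions of scales $n, n+1, \ldots, s-1$, so that $\mathcal{Q}_{K,s,s}^{\leq L}$ is the full grid and $\mathcal{Q}_{K,s,0}^{\leq L}$ is the final output.

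For the volume bound \eqref{eq: perforated volume lower bound}, I would proceed by downward induction on the scale. The key geometric consequence of $s$-goodness is that at each scale $n \leq s-1$ and within any $L_{n+1}$-sub-box nested inside the big box, the $n$-bad vertices of $\mathbb{G}_n$ it contains are confined to a single cube of side $\leq r_n L_n$. Hence at most $(2 r_n L_n)^d$ vertices of $\mathbb{G}_0$ are deleted per $L_{n+1}$-sub-box at scale $n$, which is a multiplicative loss of at most $(4 r_n / l_n)^d$. Taking the telescoping product over scales and bounding the infinite tail via \eqref{eq: product r l lower} yields both inequalities in \eqref{eq: perforated volume lower bound}.

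The connectivity of $\mathcal{Q}_{K,s,0}^{\leq L}\cap((z+[0,L_s)^d)\cup(z'+[0,L_s)^d))$ follows from a parallel downward induction: at each scale $n$, the deleted bad cube has side $\leq r_n L_n \ll L_n$ by \eqref{eq: pf of good balls asp1}, so it is surrounded by a thick shell of $\mathcal{Q}_{K,s,n+1}^{\leq L}$-vertices (available by induction) through which any would-be disconnection can be rerouted along $\mathbb{G}_0$-edges. Iterating from $n=s-1$ down to $n=0$ preserves global connectivity inside the union of the two adjacent $L_s$-boxes.

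The main obstacle is the isoperimetric inequality. I would adapt the argument of \cite[Corollary~2.5]{Sapozhnikov2014}: pick the unique scale $n \leq s$ with $L_n^d \leq \#A < L_{n+1}^d$ (possible exactly because $\#A$ is at least $(L_s/L_0)^{d^2}$ or $c(K+4)^d(L_s/L_0)^d$), cover $A$ by the natural partition into $L_n$-sub-boxes, and classify each sub-box as \emph{dense} or \emph{sparse} according to whether $A$ occupies more or less than half of $\mathcal{Q}_{K,s,0}^{\leq L}$ inside it. In the sparse regime one gets a boundary contribution from the classical $\mathbb{Z}^d$ isoperimetric inequality applied inside each perforated $L_n$-box, the perforations being controlled by the already-proven volume and connectivity statements. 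In the dense regime one counts boundary edges between adjacent dense and non-dense sub-boxes using \eqref{eq: perforated volume lower bound}, and then switches to the complement $\mathcal{Q}_{K,s,0}^{\leq L}\setminus A$ using the upper bound $\#A \leq \tfrac12 \#\mathcal{Q}_{K,s,0}^{\leq L}$. The delicate step, and the main source of technical book-keeping, is to ensure that the isoperimetric constant $c$ does not degrade when we combine the scale-$n$ local bound with the macroscopic count; the fact that our bad regions form a single cluster per good super-box (rather than the two allowed in \cite{Sapozhnikov2014}) eliminates the more involved covering arguments there but still demands careful accounting.
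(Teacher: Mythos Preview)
Your proposal is correct and aligns with the paper's treatment: the paper does not give an independent proof of this lemma but simply states it as a weaker version of \cite[Lemma~2.4, Corollary~2.5]{Sapozhnikov2014}, implicitly deferring to the multi-scale renormalization argument there. Your sketch follows exactly that strategy---downward induction on scale for the volume and connectivity, and the dense/sparse sub-box dichotomy for the isoperimetric bound---with the natural simplification afforded by the paper's one-cluster-per-good-super-box definition of $(n+1)$-goodness, so there is nothing to add.
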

 Next, we explain the way to deduce similar properties for $\widetilde{\mathcal{C}}_{K,s,L_0}^{\leq \infty}$ from Lemma~\ref{lem: perforated} when $\mathcal{H}^{\alpha,L}_{K,s}$ occurs. Although $\cup_{x\in\mathcal{Q}_{K,s,0}^{\leq L}}\mathcal{C}_x^{\leq L}$ itself is not necessarily connected, its vertices are connected inside $\mathcal{F}_{K,s,L_0}^{\leq L}$, where
 \[\mathcal{F}_{K,s,L_0}^{\leq L}\overset{\text{def}}{=}\bigcup_{\substack{x,y\in\mathcal{Q}_{K,s,0}^{\leq L}\\|x-y|_1=L_0}}\left\{z: \begin{array}{l}z\overset{(x+[0,L_0)^d)\cup(y+[0,L_0)^d),\mathcal{L}_{\alpha}^{\leq L}}{\longleftrightarrow}\mathcal{C}_x^{\leq L}\\
 \text{and }z\overset{(x+[0,L_0)^d)\cup(y+[0,L_0)^d),\mathcal{L}_{\alpha}^{\leq L}}{\longleftrightarrow}\mathcal{C}_y^{\leq L}\end{array}\right\}.\]
 By definition of $0$-good vertices, for $x\in\mathcal{Q}_{K,s,0}^{\leq L}$, we have that $\frac{9}{10}\theta^{(L)}(\alpha)L_0^d\leq\#\mathcal{C}_x^{\leq L}\leq\frac{11}{10}\theta^{(L)}(\alpha)L_0^d$ and that $\mathcal{S}_{L_0}^{\leq L}\cap(x+[0,L_0)^d)\leq\frac{11}{10}\theta^{(L)}(\alpha)L_0^d$. Hence, by \eqref{eq: perforated volume lower bound}, we see that
 \begin{equation}\label{eq: sapo eq 3.3}
  \#\bigcup_{x\in [0,K)^d\cap\mathcal{Q}_{K,s,0}^{\leq L}}\mathcal{C}_x^{\leq L}\geq \frac{9}{10}\theta^{(L)}(\alpha)\prod\limits_{i=0}^{\infty}\left(1-\left(\frac{4r_i}{l_i}\right)^d\right)L_s^d\geq\frac{9}{10}\cdot\frac{99}{100}(1-10^{-12})\theta(\alpha)K^dL_s^d,
 \end{equation}
 and that
 \begin{align}\label{eq: sapo eq 3.4}
  \#\left(\mathcal{S}_{L_0}^{\leq L}\cap [0,KL_s)^d\right)\leq & L_0^d((KL_s/L_0)^d-\#\mathcal{Q}_{K,s,0}^{\leq L}\cap [0,KL_s)^d))\notag\\
  &+\frac{11}{10}\theta^{(L)}(\alpha)L_0^d\cdot\#\mathcal{Q}_{K,s,0}^{\leq L}\cap [0,KL_s)^d)\notag\\
  \leq & K^dL_s^d\left(\frac{11}{10}\theta^{(L)}(\alpha)\prod\limits_{i=0}^{\infty}\left(1-\left(\frac{4r_i}{l_i}\right)^d\right)+1-\prod\limits_{i=0}^{\infty}\left(1-\left(\frac{4r_i}{l_i}\right)^d\right)\right)\notag\\
  \leq & (11/10+10^{-12})\theta(\alpha)\cdot K^dL_s^d.
 \end{align}
 By \eqref{eq: sapo eq 3.4} and the property $c)$ in the definition of $\mathcal{H}_{K,s}^{\alpha,L}$, we see that
 \begin{equation}\label{eq: variant sapo eq 3.4}
  \#\left(\mathcal{S}_{L_0}^{\leq \infty}\cap [0,KL_s)^d\right)\leq (11/10+10^{-12}+1/100)\theta(\alpha)\cdot K^dL_s^d.
 \end{equation}
 Note that \eqref{eq: sapo eq 3.3} and \eqref{eq: variant sapo eq 3.4} imply that \[\#\cup_{x\in(x_s/L_s+[0,K)^d)\cap\mathcal{Q}_{K,s,0}^{\leq L}}\mathcal{C}_x^{\leq L}\geq\frac{1}{2}\cdot \#\left(\mathcal{S}_{L_0}^{\leq \infty}\cap [0,KL_s)^d)\right)\geq\frac{1}{2}\cdot\#\mathcal{C}_{K,s,L_0}^{\leq \infty}.\]
 Hence, $\cup_{x\in(x_s/L_s+[0,K)^d)\cap\mathcal{Q}_{K,s,0}^{\leq L}}\mathcal{C}_x^{\leq L}\subset\mathcal{C}_{K,s,L_0}^{\leq \infty}$ and $\cup_{x\in\mathcal{Q}_{K,s,0}^{\leq L}}\mathcal{C}_x^{\leq L}$ is a subset of $\widetilde{\mathcal{C}}_{K,s,L_0}^{\leq \infty}$.
 Similar to the proof of \eqref{eq: sapo eq 3.3}, we get \eqref{eq: li2}.

 For the connectness properties in Lemma~\ref{lem: isoperimetric}, we also follow the strategy of \cite{Sapozhnikov2014}. By definition of $\widetilde{\mathcal{C}}_{K,s,L_0}^{\leq \infty}$, for two vertices $x,y\in\widetilde{\mathcal{C}}_{K,s,L_0}^{\leq L}$, there exist $x',y'\in\mathcal{C}_{K,s,L_0}^{\leq L}$ such that $x\overset{B(x',2L_s),\mathcal{L}_{\alpha}^{\leq L}}{\longleftrightarrow}x'$ and $y\overset{B(y',2L_s),\mathcal{L}_{\alpha}^{\leq L}}{\longleftrightarrow}y'$. Next, by part b) in the definition of $\mathcal{H}_{K,s}^{\alpha,L}$, there exist $x'',y''\in\cup_{x\in\mathcal{Q}_{K,s,0}^{\leq L}}\mathcal{C}_x^{\leq L}$ such that $x'\overset{B(x'',2L_s),\mathcal{L}_{\alpha}^{\leq L}}{\longleftrightarrow}x''$ and $y'\overset{B(y'',2L_s),\mathcal{L}_{\alpha}^{\leq L}}{\longleftrightarrow}y''$. Let $u,v\in\mathbb{G}_s$ such that $x''\in (u+[0,L_s)^d)\cap\mathcal{Q}_{K,s,0}^{\leq L}$ and $y''\in(v+[0,L_s)^d)\cap\mathcal{Q}_{K,s,0}^{\leq L}$. Then, by Lemma~\ref{lem: perforated}, for any nearest neighbor path $z_0=u,z_1,\ldots,z_n=v$ in $\mathbb{G}_s$ connecting $u$ and $v$, $\cup_{i=0}^{n}(z_i+[0,L_s)^d)\cap\mathcal{Q}_{K,s,0}^{\leq L}$ is a connected set. Thus, the connectness properties in Lemma~\ref{lem: isoperimetric} follows from the definition of the $0$-good vertices.

 For the isoperimetric inequalities, it is also a consequence of Lemma~\ref{lem: perforated} and we follow closely the proof of \cite[Theorem~2.14]{Sapozhnikov2014}. We take $A\subset\widetilde{\mathcal{C}}_{K,s,L_0}^{\leq\infty}$ that $2^d\cdot100^{d^2}L_s^{d^2}\leq\#A\leq\frac{1}{2}\cdot\#\widetilde{\mathcal{C}}_{K,s,L_0}^{\leq\infty}$. (For a general $\delta\in(0,1)$, it suffices to consider $\widetilde{\mathcal{C}}_{K,s,L_0}^{\leq \infty}\setminus A$ if necessary.) We define two subsets:
 \begin{align*}
  \mathcal{K}=&\{x\in\mathbb{G}_0:(x+[0,L_0)^d)\cap A\neq\emptyset\}\\
  \mathcal{K}'=&\{x\in\mathbb{G}_0:(x+[0,L_0)^d)\cap A\neq\emptyset, (x+[0,L_0)^d)\cap A=(x+[0,L_0)^d)\cap\widetilde{\mathcal{C}}_{K,s,L_0}^{\leq\infty}\}.
 \end{align*}
 Note that $\#\mathcal{K}\geq \#A/L_0^d$. By the connection property we have just proved, for each $x\in\mathcal{K}\setminus\mathcal{K'}$, there exist two vertices $z\in A,z'\in \widetilde{\mathcal{C}}_{K,s,L_0}^{\leq \infty}\setminus A$ such that $z'$ is connected to $z$ inside $B(z,17L_s)\cap\widetilde{\mathcal{C}}_{K,s,L_0}^{\leq \infty}$. This implies that $\#\partial_{\widetilde{\mathcal{C}}_{K,s,L_0}^{\leq \infty}} A\geq \#(\mathcal{K}\setminus\mathcal{K'})/(100L_s/L_0)^d$. If $\#\mathcal{K}'\leq \frac{1}{2}\cdot\#\mathcal{K}$, then
 \[\#\partial_{\widetilde{\mathcal{C}}_{K,s,L_0}^{\leq \infty}} A\geq \#A/(2(100L_s)^d)\geq \#A^{\frac{d-1}{d}}.\]
 It remains to consider the case $\#\mathcal{K}'\geq \frac{1}{2}\cdot\#\mathcal{K}\geq\frac{1}{2}\cdot\#A/L_0^d$. We define a subset $\mathcal{A}$ of $\mathcal{Q}_{K,s,0}^{\leq L}$ as follows:
 \[\mathcal{A}\overset{\text{def}}{=}\{x\in \mathcal{Q}_{K,s,0}^{\leq L}:\mathcal{C}_x^{\leq L}\subset A\}.\]
 Then, $\#\mathcal{A}\geq \#\mathcal{K}'\geq \frac{1}{2}\cdot\#\mathcal{K}\geq\frac{1}{2}\cdot\#A/L_0^d\geq 2^{d-1}100^{d^2}(L_s/L_0)^{d^2}$. By \eqref{eq: variant sapo eq 3.4},
 \begin{equation}\label{eq: vol upper bound for tilde C}
  \#\widetilde{\mathcal{C}}_{K,s,L_0}^{\leq\infty}\leq \#\left(\mathcal{S}_{L_0}^{\leq \infty}\cap [-2L_s,(K+2)L_s)^d)\right)\leq 6/5\cdot\theta(\alpha)(K+4)^dL_s^d .
 \end{equation}
 By \eqref{eq: sapo eq 3.3}, we have that
 \begin{equation}\label{eq: vol lower bound frame}
  \#\cup_{x\in\mathcal{Q}_{K,s,L_0}^{\leq L}}\mathcal{C}_{x}^{\leq L}\geq \frac{4}{5}\theta(\alpha)(K+4)^dL_s^d.
 \end{equation}
 By \eqref{eq: vol upper bound for tilde C}, \eqref{eq: vol lower bound frame} and the definition of $0$-good vertices, we have that
 \[\frac{9}{10}\theta(\alpha)L_0^d\cdot\#\mathcal{A}\leq \#A\leq\frac{1}{2}\cdot \widetilde{\mathcal{C}}_{K,s,L_0}^{\leq\infty}\leq\frac{3}{4}\cdot\#\cup_{x\in\mathcal{Q}_{K,s,L_0}^{\leq L}}\mathcal{C}_{x}^{\leq L}\leq \frac{3}{4}\cdot\frac{11}{10}\theta(\alpha)L_0^d\cdot\#\mathcal{Q}_{K,s,L_0}^{\leq L},\]
 i.e. $\#\mathcal{A}\leq \frac{11}{12}\cdot \#\mathcal{Q}_{K,s,L_0}^{\leq L}$. By Lemma~\ref{lem: perforated} (and considering $\mathcal{Q}_{K,s,L_0}\setminus \mathcal{A}$ if necessary), we have that $\#\partial_{\mathcal{Q}_{K,s,L_0}}\mathcal{A}\geq c\cdot \#\mathcal{A}^{\frac{d-1}{d}}$, which implies that there exists $\gamma=\gamma(d,L_0)>0$ such that
 \[\#\partial_{\widetilde{\mathcal{C}}_{K,s,L_0}^{\leq\infty}}A\geq \gamma\cdot\#A^{\frac{d-1}{d}}.\]

\paragraph{Acknowledgement}
The author thanks A. Sapozhnikov for useful suggestions and fruitful discussions.
\bibliographystyle{amsalpha}
\bibliography{reference.bib}
\end{document}